 \newtheorem{theorem}{Theorem}[section]
 \newtheorem{lemma}[theorem]{Lemma}
 \newtheorem{proposition}[theorem]{Proposition}
 \theoremstyle{definition}
 \newtheorem{definition}[theorem]{Definition}
 \theoremstyle{remark}
 \newtheorem{remark}[theorem]{Remark}
 \newtheorem*{example}{Example}
 \newtheorem{alg}{Algorithm}
\DeclareMathOperator{\ord}{ord} \DeclareMathOperator{\Card}{Card}
 \DeclareMathOperator{\rk}{rk}
\DeclareMathOperator{\trdeg}{tr.deg}
\DeclareMathOperator{\Ker}{Ker}
\DeclareMathOperator{\qf}{qf} \DeclareMathOperator{\Eord}{Eord}
\DeclareMathOperator{\lex}{lex}
\begin{document}
\def\D{\displaystyle}

\begin{center}
\large {\bf New Multivariate Dimension Polynomials of Inversive Difference Field Extensions}
\normalsize
\smallskip

Alexander  Levin

The Catholic University of America

Washington, D. C.  20064, USA

levin@cua.edu   

https://sites.google.com/a/cua.edu/levin
\end{center}

\begin{abstract}
We introduce a new type of reduction of inversive difference polynomials that is associated with a partition of the basic set of automorphisms $\sigma$ and uses a generalization of the concept of effective order of a difference polynomial. Then we develop the corresponding method of characteristic sets and apply it to prove the existence and obtain a method of computation of multivariate dimension polynomials of a new type that describe
the transcendence degrees of intermediate fields of finitely generated inversive difference field extensions obtained by adjoining transforms of the generators whose orders with respect to the components of the partition of $\sigma$ are bounded by two sequences of natural numbers. We show that such dimension polynomials carry essentially more invariants (that is, characteristics of the extension that do not depend on the set of its difference generators) than standard (univariate) difference dimension polynomials. We also show how the obtained results can be applied to the equivalence problem for systems of algebraic difference equations.
\end{abstract}

{\bf Key words:} difference polynomial, dimension polynomial, reduction,\\ effective order, characteristic set.

\section{Introduction}

This paper is dedicated to the memory of my dear teacher, Alexander Vasilyevich Mikhalev, who has made profound contributions to several areas of mathematics, especially to various branches of algebra including the ring theory, homological algebra, differential and difference algebra, computer algebra, algebraic $K$-theory, topological algebra, and coding theory. In his  works on differential and difference algebraic structures \cite{KLMP0}, \cite{KLMP}, \cite{MP4}, \cite{LM1} - \cite{LM7}, \cite{MP1} - \cite{MP4} and in some other papers A. V. Mikhalev obtained a number of fundamental results on differential and difference rings and modules, characteristic sets of differential and difference polynomials, and computational analysis of systems of algebraic differential and difference equations.  He has also presented excellent expositions of ideas and methods of differential and difference algebra in his books \cite{KLMP}, \cite{MP4} and papers \cite{MP1} and \cite{MP3}. Of special note is the paper \cite{MP2} where A. V. Mikhalev and E. V. Pankratiev discovered a very interesting relationship between E. Kolchin's differential dimension polynomials and A. Einstein's concept of strength of a system of algebraic differential equations. Actually, the authors showed that the strength of such a system in the
sense of A. Einstein is expressed by certain differential dimension polynomial associated with the system. (The concept of a differential dimension polynomial was introduced in \cite{Kolchin1}; many properties of such polynomials can be found in \cite{Kolchin2}.) Furthermore, they showed how the algebraic technique for computing differential dimension polynomials can be applied to the computation of the strength of fundamental systems of differential equations of mathematical physics. A similar interpretation of difference dimension polynomials and examples of computation of the strength of systems of algebraic difference equations can be found in \cite[Section[6.4]{KLMP}, \cite{L2} and \cite[Section 7.7]{L4}.

In addition to the fact that a difference dimension polynomial associated with a system of algebraic difference equations expresses the strength of such a system in the sense of A. Einstein (the significant role of this characteristic in the theory of equations of mathematical physics is described in \cite{E}), the important role of difference dimension polynomials is determined by at least three more factors. First, a difference  dimension polynomial of a finitely generated difference field extension (or of a system of algebraic difference equations that defines such an extension) carries certain invariants, i.e., characteristics of the extension that do not  change when we switch to another system of difference generators (with the corresponding change of the defining equations), see, for example, \cite[Chapter 6]{KLMP} and \cite[Chapter 4]{L4}. In this connection, one should mention the results on multivariate difference dimension polynomials associated with partitions of the basic set of translations, see \cite{L2}, \cite{L3}, \cite{L6}, and \cite[Chapter 3]{L4}. It turned out that they carry more such invariants than their univariate counterparts. (See also \cite{L8} where the results on multivariate difference dimension polynomials are generalized to the difference-differential case.) Second, properties of difference dimension polynomials associated with prime difference polynomial ideals provide a powerful tool in the dimension theory of difference algebras, see \cite[Chapter 7]{KLMP}, \cite[Section 4.6]{L4}, and \cite{L7}. Finally, the results on difference dimension polynomials can be naturally extended to algebraic and differential algebraic structures with a finitely generated commutative group action, see \cite{L5}, \cite{LM1}, and \cite{LM3}.

In this paper we introduce a reduction of inversive difference polynomials associated with a fixed partition of the set of basic translations. This reduction takes into account the effective orders of inversive difference polynomials with respect to the elements of the partition (we generalize the concept of the effective order of an ordinary difference polynomial defined in \cite[Chapter 2, Section 4]{Cohn}). Note that the idea of using a generalized effective order for (non-inversive) difference polynomials to obtain bivariate difference dimension polynomials of a new type was first explored in \cite{L9}. We consider a new type of characteristic sets that are associated with the introduced reduction and use their properties to prove the existence of a multivariate dimension polynomial of a finitely generated inversive difference field extension that describes the transcendence degrees of intermediate fields obtained by adjoining transforms of the generators whose orders with respect to the elements of the given partitions lie between two given natural numbers. This dimension polynomial is a polynomial in $2p$ variables where $p$ is the number of subsets in the partition of the basic set of translations. We determine invariants of such polynomials, that is, numerical characteristics of the extension that are carried by any its dimension polynomial and that do not depend on the system of difference generators the polynomial is associated with. Furthermore, we show that the introduced multivariate dimension polynomials carry essentially more invariants of the corresponding inversive difference field extensions than the univariate dimension polynomials of inversive difference modules and field extensions introduced in \cite{L1}. Note that while the study of difference algebraic structures deals with their endomorphisms and power products of basic translations with nonnegative exponents, inversive difference rings, fields and modules are considered together with the free commutative group generated by a set of basic automorphisms. Therefore, while the dimension theory of difference rings and modules is close to its differential counterpart, the study of inversive difference algebraic structures (including the study of dimensional characteristics of such structures) encounters many problems caused by the fact that one has to consider negative powers of basic translations.

\section{Preliminaries}

Throughout the paper, $\mathbb{N}$, $\mathbb{Z}$, $\mathbb{Z}_{\leq 0}$, and $\mathbb{Q}$ denote the sets of all non-negative integers, non-positive integers, integers, and rational numbers, respectively. If $S$ is a finite set, then $\Card S$ denotes the number of elements of $S$. For any positive integer $m$, $\leq_{P}$ will denote the product order on $\mathbb{N}^{m}$, that is, a partial order such that $(a_{1},\dots, a_{m})\leq_{P}(a'_{1},\dots, a'_{m})$ if and only if $a_{i}\leq a'_{i}$ for $i=1,\dots, m$. The lexicographic order will be denoted by $\leq_{\lex}$.

By a ring we always mean an associative ring with unity. Every ring homomorphism is unitary (maps unity to unity), every subring of a ring contains the unity of the ring, and every algebra over a commutative ring is
unitary. Every field considered in this paper is supposed to have zero characteristic. $\mathbb{Q}[t_{1},\dots, t_{p}]$ will denote the ring of polynomials in variables $t_{1},\dots, t_{p}$ over $\mathbb{Q}$.

By a {\em difference ring} we mean a commutative ring $R$ considered together with a finite set $\sigma = \{\alpha_{1},\dots, \alpha_{m}\}$ of mutually commuting injective endomorphisms of $R$ called {\em translations}.  The set $\sigma$ is called the {\em basic set} of the difference ring $R$, which is also called a $\sigma$-ring. If $R$ is a field, it is called a {\em difference field} or a $\sigma$-field. (We will often use prefix $\sigma$- instead of the adjective ''difference''.)

If all translations of $R$ are automorphisms, we set $\sigma^{\ast} = \{\alpha_{1},\dots, \alpha_{m}, \alpha^{-1}_{1},\dots,$ $ \alpha^{-1}_{m}\}$ and say that $R$ is an {\em inversive difference ring} or a $\sigma^{\ast}$-ring.  If a difference (respectively, inversive difference) ring $R$ is a field, it is called a {\em difference} (or $\sigma$-) {field} (respectively, an {\em inversive difference} (or $\sigma^{\ast}$-) {field}).

If $R$ is an inversive difference ring with a basic set $\sigma = \{\alpha_{1},\dots, \alpha_{m}\}$, then $\Gamma$ will denote the free commutative group of all power products of the form $\gamma = \alpha_{1}^{k_{1}}\dots \alpha_{m}^{k_{m}}$ where $k_{i}\in\mathbb{Z}$ ($1\leq i\leq m$). The {\em order} of such an element $\gamma$  is defined as $\ord\,\gamma = \sum_{i=1}^{m}|k_{i}|$; furthermore, for every $r\in\mathbb{N}$, we set $\Gamma(r) = \{\gamma\in \Gamma\,|\, \ord\,\gamma\leq r\}$.

A subring (ideal) $R_{0}$ of a $\sigma$-ring $R$ is said to be a difference  (or $\sigma$-) subring of $R$ (respectively, difference (or $\sigma$-) ideal of $R$) if $R_{0}$ is closed with respect to the action of any translation $\alpha_{i}\in\sigma$. A $\sigma$-ideal $I$ of a $\sigma$-ring $R$ is called {\em reflexive} if the inclusion $\alpha_{i}(a)\in I$ ($a\in R,\, \alpha_{i}\in \sigma$) implies the inclusion $a\in I$. (If $R$ is an inversive difference ($\sigma^{\ast}$-) ring, this property means that $I$ is closed with respect to every automorphism from the set $\sigma^{\ast}$). If a prime ideal $P$ of $R$ is closed with respect to the action of any $\alpha_{i}\in\sigma$, it is called a {\em prime difference} (or $\sigma$-) {\em ideal} of $R$. If $R$ is an inversive difference ring and a prime $\sigma$-ideal is reflexive, it is referred to as a prime $\sigma^{\ast}$-ideal of $R$.

If $R$ is a $\sigma$-ring and $S\subseteq R$, then the intersection $I$ of all $\sigma$-ideals of $R$ containing the set $S$ is the smallest $\sigma$-ideal of $R$ containing $S$; it is denoted by $[S]$. If the set $S$ is finite, $S = \{a_{1},\dots, a_{r}\}$, we say that the $\sigma$-ideal $I$ is finitely generated (we write this as $I = [a_{1},\dots, a_{r}]$) and call $a_{1},\dots, a_{r}$ difference (or $\sigma$-) generators of $I$. If the $\sigma$-ring $R$ is inversive, then the smallest $\sigma^{\ast}$-ideal of $R$ containing a subset $S$ of $R$ is denoted by $[S]^{\ast}$. Elements of the set $S$ are called $\sigma^{\ast}$-generators of this ideal; if $S = \{a_{1},\dots, a_{r}\}$, we write $[a_{1},\dots, a_{r}]^{\ast}$ and say that the $\sigma^{\ast}$-ideal is finitely generated and call $a_{1},\dots, a_{r}$ its $\sigma^{\ast}$-generators. Clearly, $[S]^{\ast}$ is generated, as an ideal, by the set $\{\gamma(a)\,|\, a\in S,\, \gamma\in\Gamma\}$. (In what follows we will often write $\gamma a$ instead of $\gamma(a)$.)

If $R$ is a $\sigma^{\ast}$-ring, then an expression of the form $\sum_{\gamma\in\Gamma}a_{\gamma}\gamma$, where $a_{\gamma}\in R$ for any $\gamma\in\Gamma$ and only finitely many elements $a_{\gamma}$ are different from $0$, is called a {\em $\sigma^{\ast}$-operator} over $R$. It is an endomorphism of the additive group of $R$; if $C = \sum_{\gamma\in\Gamma}a_{\gamma}\gamma$ and $f\in R$, then $C(f) = \sum_{\gamma\in\Gamma}a_{\gamma}\gamma(f)$.
Two $\sigma^{\ast}$-operators $\sum_{\gamma\in\Gamma}a_{\gamma}\gamma$ and $\sum_{\gamma\in\Gamma}b_{\gamma}\gamma$ are considered to be equal if and only if $a_{\gamma} = b_{\gamma}$ for any $\gamma\in\Gamma$. The set of all $\sigma^{\ast}$-operators over $R$ will be denoted by $\mathcal{E}_{R}$. This set, which has a natural structure of an $R$-module generated by $\Gamma$, becomes a ring if one sets $\gamma a = \gamma(a)\gamma$ for any $a\in R$, $\gamma\in\Gamma$ and extends this rule to the multiplication of any two $\sigma^{\ast}$-operators by distributivity. The resulting ring $\mathcal{E}_{R}$ is called the ring of $\sigma^{\ast}$-operators over $R$. Clearly, if $I$ is a $\sigma^{\ast}$-ideal of $R$, $I = [f_{1},\dots, f_{k}]^{\ast}$, then every element of $I$ is of the form $\sum_{i=1}^{q}C_{i}(f_{i})$ ($q\in\mathbb{N}$) where $C_{1},\dots, C_{q}\in\mathcal{E}_{R}$.

If $L$ is a difference ($\sigma$-) field and its subfield $K$ is also a $\sigma$-subring of $L$, then  $K$ is said to be a difference (or $\sigma$-) subfield of $L$; $L$, in turn, is called a difference (or $\sigma$-) field extension or a $\sigma$-overfield of $K$. In this case we also say that we have a $\sigma$-field extension $L/K$. If the $\sigma$-field $L$ is inversive and $K$ is a $\sigma$-subfield of $L$ such that $\alpha(K)\subseteq K$ for any $\alpha\in \sigma^{\ast}$, we say that $K$ is an inversive difference (or $\sigma^{\ast}$-) subfield of $L$ or that we have a $\sigma^{\ast}$-field extension $L/K$. In the last case, if $S\subseteq K$, then the smallest $\sigma^{\ast}$-subfield of $L$ containing $K$ and $S$ is denoted by $K\langle S \rangle^{\ast}$. $S$ is said to be the set of {\em $\sigma^{\ast}$-generators} of $K\langle S \rangle^{\ast}$ over $K$. If the set $S$ is finite, $S = \{\eta_{1},\dots,\eta_{n}\}$, we say that $L/K$ is a finitely generated inversive difference (or $\sigma^{\ast}$-) field extension. As a field, $L\langle S \rangle^{\ast} = K(\gamma a\,|\,\gamma\in\Gamma, \,a\in S)$.

Let $R$ and $R'$ be two difference rings with the same basic set $\sigma$, so that elements of $\sigma$ act on each of the rings as pairwise commuting endomorphisms. (More rigorously, we assume that there exist injective mappings of $\sigma$ into the sets of endomorphisms of the rings $R$ and $R'$ such that the images of any two elements of $\sigma$ commute. For convenience we will denote these images by the same symbols). A ring homomorphism $\phi: R \longrightarrow R'$ is called a {\em difference} (or $\sigma$-) {\em homomorphism} if $\phi(\alpha a) = \alpha\phi(a)$ for any $\alpha\in \sigma$, $a\in R$. It is easy to see that the kernel of such a mapping is a reflexive difference ideal of $R$.

In what follows we deal with inversive difference ($\sigma^{\ast}$-) rings and fields. If $R$ is such a ring and $Y =\{y_{1},\dots, y_{n}\}$ is a finite set of symbols, we can consider the polynomial ring $R[\Gamma Y]$, where $\Gamma Y$ denotes the set of symbols $\{\gamma y_{j}|\gamma\in\Gamma,  1\leq j\leq n\}$, as an inversive difference ring containing $R$ as its $\sigma^{\ast}$-subring. The corresponding inversive difference ring extension is defined by setting $\alpha(\gamma y_{j}) = (\alpha\gamma)y_{j}$ for any $\alpha\in\sigma^{\ast}$, $\gamma\in\Gamma$, $1\leq j\leq n$; it is denoted by $R\{y_{1},\dots, y_{n}\}^{\ast}$ and called the ring of inversive difference (or $\sigma^{\ast}$-) polynomials in $\sigma$-indeterminates $y_{1},\dots, y_{n}$ over $R$. A $\sigma^{\ast}$-ideal of $R\{y_{1},\dots, y_{n}\}^{\ast}$ is called {\em linear} if it is generated (as a $\sigma^{\ast}$-ideal) by homogeneous linear $\sigma^{\ast}$-polynomials, that is, $\sigma^{\ast}$-polynomials of the form $\sum_{i=1}^{d}a_{i}\gamma_{i}y_{k_{i}}$ ($a_{i}\in R$, $\gamma_{i}\in \Gamma$, $1\leq k_{i}\leq n$ for $i=1,\dots, d$). It is shown in \cite[Proposition 2.4.9]{L4} that if $R$ is a $\sigma^{\ast}$-field, then a linear $\sigma^{\ast}$-ideal of $R\{y_{1},\dots, y_{n}\}^{\ast}$ is prime.

If $K$ is an inversive difference ($\sigma^{\ast}$-) field, $f\in K\{y_{1},\dots, y_{n}\}^{\ast}$ and $\eta = (\eta_{1},\dots, \eta_{n})$ is an $n$-dimensional vector with coordinates in some $\sigma^{\ast}$-overfield of $K$, then $f(\eta)$ (or $f(\eta_{1},\dots, \eta_{n})$\,) denotes the result of the replacement of every entry $\gamma y_{i}$ in $f$ with $\gamma\eta_{i}$ ($\gamma\in\Gamma$, $1\leq i\leq n$).

If $\pi:R=K\{y_{1},\dots, y_{n}\}^{\ast}\rightarrow L=K\langle \eta_{1},\dots,\eta_{n}\rangle^{\ast}$ is a natural $\sigma$-homomorphism ($\pi(a) = a$ for any $a\in K$ and $y_{i}\mapsto \eta_{i}$), then $P = \Ker\pi$ is a prime $\sigma^{\ast}$-ideal of $R$ called the {\em defining ideal} of the extension $L/K$. In this case, $L$ is isomorphic to the $\sigma$-field $\qf(R/P)$, the quotient field of $R/P$ ($\eta_{i}\leftrightarrow y_{i}+P$).

Let $K$ be a $\sigma^{\ast}$-field and $\mathcal{U}$ a family of elements of some $\sigma^{\ast}$-overfield of $K$. We say that the family $\mathcal{U}$ is $\sigma$-{\em algebraically dependent} over $K$, if the family
$\Gamma\mathcal{U} = \{\gamma(u)\,\mid\,\gamma\in\Gamma,\, u\in \mathcal{U}\}$ is algebraically dependent over $K$ (that is, there exist elements $u_{1},\dots, u_{k}\in \Gamma\mathcal{U}$ and a nonzero polynomial $f$ in $k$ variables with coefficients in $K$ such that $f(u_{1},\dots, u_{k}) = 0$). Otherwise, the family $\mathcal{U}$ is said to be $\sigma$-{\em algebraically independent} over $K$.

If $L$ is a $\sigma^{\ast}$-overfield of a $\sigma^{\ast}$-field $K$, then a set $B\subseteq L$ is said to be a $\sigma$-{\em transcendence basis} of $L$ over $K$ if $B$ is $\sigma$-algebraically independent over
$K$ and every element $a\in L$ is $\sigma$-algebraic over $K\langle B\rangle$ (it means that the set $\{\gamma a\,\mid\,\tau\in\Gamma\}$ is algebraically dependent over the field $K\langle B\rangle^{\ast}$). If $L$ is a finitely generated $\sigma^{\ast}$-field extension of $K$, then all $\sigma$-transcendence bases of $L$ over $K$ are finite and have the same number of elements (see \cite[Proposition 4.1.6]{L4}). This number is called the $\sigma$-{\em transcendence degree} of $L$ over $K$ (or the $\sigma$-transcendence degree of the extension $L/K$); it is denoted by $\sigma$-$\trdeg_{K}L$.

The following theorem, whose prove can be found in \cite[Section 6.4]{KLMP}, introduces the (univariate) dimension polynomial of a finitely generated inversive difference field extension.

\begin{theorem}  Let $K$ be an inversive difference field with a basic set $\sigma = \{\alpha_{1},\dots, \alpha_{m}\}$ and $L = K\langle \eta_{1},\dots,\eta_{n}\rangle^{\ast}$ be a $\sigma^{\ast}$-field extension of $K$  generated by a finite set $\eta =\{\eta_{1},\dots,\eta_{n}\}$. Then there exists a polynomial $\phi_{\eta|K}(t)\in\mathbb{Q}[t]$ such that

{\em (i)}\, $\phi_{\eta|K}(r) = \trdeg_{K}K(\{\gamma\eta_{j} | \gamma\in\Gamma(r), 1\leq j\leq n\})$ for all sufficiently large $r\in\mathbb{N}$;

{\em (ii)}\, $\deg \phi_{\eta|K} \leq m$ and $\phi_{\eta|K}(t)$ can be written as \, $\phi_{\eta|K}(t) = \D\sum_{i=0}^{m}a_{i}{t+i\choose i}$ where $a_{0},\dots, a_{m}\in\mathbb{Z}$ and $2^{m}|a_{m}$\,\,.

{\em (iii)}\, $d =  \deg \phi_{\eta|K}$,\, $a_{m}$ and $a_{d}$ do not depend on the set of $\sigma^{\ast}$-generators $\eta$ of $L/K$ ($a_{d}\neq a_{m}$ if and only if $d < m$). Moreover,
$\D\frac{a_{m}}{2^{m}} = \sigma$-$\trdeg_{K}L$.

{\em (iv)}\, If the elements $\eta_{1},\dots, \eta_{n}$ are $\sigma$-algebraically independent over $K$, then
$$\phi_{\eta | K}(t) = n\D\sum_{k=0}^{m}(-1)^{m-k}2^{k}{n\choose k}{{t+k}\choose k}\,.$$
\end{theorem}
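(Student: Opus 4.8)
The plan is to reduce the transcendence-degree computation to a combinatorial count by means of characteristic-set theory. First I would realize the extension as $L\cong\qf(R/P)$, where $R=K\{y_{1},\dots,y_{n}\}^{\ast}$ and $P=\Ker\pi$ is the (prime, reflexive) defining $\sigma^{\ast}$-ideal of $L/K$, as recalled above. I would fix an orderly ranking of the set of terms $\Gamma Y=\{\gamma y_{j}\}$ (a total order comparing terms first by $\ord\gamma$) and choose a characteristic set $\mathcal{A}=\{A_{1},\dots,A_{d}\}$ of $P$ with respect to it. Each $A_{i}$ has a leader $u_{A_{i}}=\gamma_{i}y_{j_{i}}$, and the leaders determine, for each $j$, a finite set $E_{j}\subseteq\mathbb{Z}^{m}$ of exponent vectors of the leaders involving $y_{j}$.

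The algebraic heart of the argument is the identity, valid for all sufficiently large $r$,
$$\trdeg_{K}K(\{\gamma\eta_{j}\,:\,\gamma\in\Gamma(r),\,1\le j\le n\})=\Card\{\gamma y_{j}\,:\,\ord\gamma\le r,\ \gamma y_{j}\ \text{reduced with respect to }\mathcal{A}\}.$$
I would prove the two halves in the usual way. On one hand, the $\pi$-images of the reduced terms are algebraically independent over $K$: any nontrivial algebraic relation among them would, after reduction modulo $\mathcal{A}$, yield a nonzero element of $P$ that is reduced with respect to $\mathcal{A}$, contradicting the defining property of a characteristic set. On the other hand, every non-reduced term $\gamma\eta_{j}$ is algebraic over the subfield generated by the reduced ones of no greater order, since the appropriate transform of some $A_{i}$ expresses the leader $\gamma y_{j}$ algebraically in terms of strictly lower terms.

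Everything then rests on counting reduced terms, and here the inversive nature of the problem is decisive: since $\Gamma\cong\mathbb{Z}^{m}$ is a group rather than the monoid $\mathbb{N}^{m}$, one cannot use the shifted orthants $e+\mathbb{N}^{m}$ of the classical (non-inversive) theory. I would instead exploit the reflexivity of $P$: for a leader with exponent $e$, reflexivity forces transforms whose exponents are the sign-reflections of $e$, so that $E_{j}$ effectively carves out outward cones in every relevant orthant. Decomposing $\mathbb{Z}^{m}=\bigcup_{\varepsilon\in\{+,-\}^{m}}\mathbb{Z}^{m}_{\varepsilon}$ and noting that on each orthant the dominance-of-absolute-values order is a copy of the product order on $\mathbb{N}^{m}$, the reduced exponents for $y_{j}$ are exactly the complement of the symmetric region $V_{E_{j}}$ generated by $E_{j}$. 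Applying the classical Kolchin numerical-polynomial theorem for $\mathbb{N}^{m}$ orthant by orthant, I would conclude that $r\mapsto\Card\{b\in\mathbb{Z}^{m}\setminus V_{E_{j}}\,:\,\ord b\le r\}$ agrees for large $r$ with a polynomial of degree $\le m$ of the form $\sum_{i=0}^{m}a^{(j)}_{i}\binom{r+i}{i}$, the leading coefficient being divisible by $2^{m}$ because it is a sum of $2^{m}$ orthant contributions. Summing over $j$ gives parts (i) and (ii), including $2^{m}\mid a_{m}$.

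For part (iii) I would identify the top-degree behavior: each $\sigma$-algebraically independent generator contributes a full copy of $\Card\Gamma(r)$ to leading order, whence $a_{m}/2^{m}$ equals the number of such generators, i.e. $\sigma$-$\trdeg_{K}L$, an invariant of the extension as noted in the preliminaries. The invariance of $d$ and of $a_{d}$ I would obtain by a comparison argument: passing to a second generating set changes each generator into a bounded-order combination of transforms of the first, which sandwiches the two dimension polynomials between shifts of one another and pins down their common degree and leading-in-degree coefficient. Finally, part (iv) is the degenerate case $P=0$: there are no leaders, every term is reduced, so $\phi_{\eta\mid K}(r)=n\,\Card\Gamma(r)$, and I would finish by the lattice-point identity
$$\Card\Gamma(r)=\sum_{k=0}^{m}2^{k}\binom{m}{k}\binom{r}{k}=\sum_{k=0}^{m}(-1)^{m-k}2^{k}\binom{m}{k}\binom{r+k}{k},$$
where the first equality counts the points of the $\ell^{1}$-ball by their number of nonzero coordinates and the second is a Vandermonde-type rewriting into the basis $\binom{r+k}{k}$ that yields the claimed closed form. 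The main obstacle I anticipate is precisely the inversive combinatorics of the third paragraph: setting up a reduction theory and a $\mathbb{Z}^{m}$ numerical-polynomial theorem that correctly accounts for negative powers through reflexivity and the $2^{m}$-orthant decomposition, together with the intrinsic characterization underlying the invariance of $d$ and $a_{d}$.
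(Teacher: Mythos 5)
Your proposal is correct and follows essentially the same route as the paper's own (cited) proof: the paper defers Theorem 2.1 to \cite[Section 6.4]{KLMP}, where the argument is exactly what you outline --- pass to the defining prime reflexive ideal, take a characteristic set with respect to an orderly ranking, show that the terms which are not multiples of the leaders give a transcendence basis at each order $\leq r$, and count them via the numerical-polynomial theory for subsets of $\mathbb{Z}^{m}$ (the paper's Theorem 2.8 packages your orthant-by-orthant Kolchin argument; note that the honest reason for $2^{m}\mid a_{m}$ is not that the count is ``a sum of $2^{m}$ orthant contributions'' but the all-or-nothing phenomenon your reflexivity remark supplies: each $y_{j}$ is leader-constrained in every orthant or in none, so each generator contributes $0$ or $2^{m}$ to $a_{m}$). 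The same template reappears in the paper's proof of its generalization, Theorem 4.1.
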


The polynomial $\phi_{\eta|K}(t)$ is called the {\em $\sigma^{\ast}$-dimension polynomial} of the $\sigma^{\ast}$-field extension $L/K$ associated with the system of $\sigma^{\ast}$-generators $\eta$. Methods and algorithms for computation of such polynomials can be found in \cite{KLMP}.

\bigskip

\begin{center}

DIMENSION POLYNOMIALS OF SUBSETS OF $\mathbb{Z}^{m}$

\end{center}

\bigskip

In what follows we present some results about numerical polynomials associated with subsets of $\mathbb{Z}^{m}$ ($m$ is a positive integer). The proofs of the corresponding statements can be found in
\cite{KLMP0} and \cite[Chapter 2]{KLMP}.

\begin{definition}
A polynomial in $p$ variables $f(t_{1}, \dots, t_{p})\in\mathbb{Q}[t_{1},\dots,t_{p}]$ is called {\bf numerical} if $f(r_{1},\dots, r_{p})\in\mathbb{Z}$ for all sufficiently large $(r_{1}, \dots, r_{p})\in\mathbb{N}^{p}$. (It means that there exist $s_{1},\dots,s_{p}\in\mathbb{N}$ such that the membership $f(r_{1},\dots, r_{p})\in\mathbb{Z}$ holds for all $(r_{1},\dots, r_{p})\in\mathbb{N}^{p}$ with $r_{1}\geq s_{1},\dots, r_{p}\geq s_{p}$.).
\end{definition}
It is clear that every polynomial with integer coefficients is numerical.  As an example of a numerical polynomial in $p$ variables with non-integer coefficients ($p\in\mathbb{N}, p\geq 1$) one can consider a polynomial
$\D\prod_{i=1}^{p}{t_{i}\choose m_{i}}$ \, where $m_{1},\dots, m_{p}\in\mathbb{N}$. (As usual, $\D{t\choose k}$ ($k\in\mathbb{Z}, k\geq 1$) denotes the polynomial $\D\frac{t(t-1)\dots (t-k+1)}{k!}$ in one
variable $t$, $\D{t\choose 0} = 1$, and $\D{t\choose k} = 0$ if $k$ is a negative integer.)
The following theorem proved in \cite[Chapter 2]{KLMP} gives the ''canonical'' representation of a numerical polynomial in several variables.

\begin{theorem}
Let $f(t_{1},\dots, t_{p})$ be a numerical polynomial in $p$ variables $t_{1},\dots,  t_{p}$, and let $\deg_{t_{i}}f = m_{i}$ ($1\leq i\leq p$) where $m_{1},\dots, m_{p}\in\mathbb{N}$. Then the
polynomial  $f(t_{1},\dots, t_{p})$ can be represented in the form

\begin{equation}
f(t_{1},\dots t_{p}) =\D\sum_{i_{1}=0}^{m_{1}}\dots \D\sum_{i_{p}=0}^{m_{p}}{a_{i_{1}\dots i_{p}}}{t_{1}+i_{1}\choose i_{1}}\dots{t_{p}+i_{p}\choose i_{p}}
\end{equation}
with integer coefficients $a_{i_{1}\dots i_{p}}$ ($0\leq i_{k}\leq m_{k}$ for $k = 1,\dots, p$) that are uniquely defined by the numerical polynomial.
\end{theorem}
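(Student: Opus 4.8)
The plan is to treat the $p$-fold products $\binom{t_1+i_1}{i_1}\cdots\binom{t_p+i_p}{i_p}$ as a convenient basis, and to argue separately for the existence and uniqueness of a \emph{rational} expansion and for the \emph{integrality} of its coefficients. First I would record the one-variable fact that the polynomials $\binom{t+i}{i}$, $i=0,\dots,m$, form a $\mathbb{Q}$-basis of the space $V_m$ of polynomials in $t$ of degree $\le m$: they have pairwise distinct degrees $0,1,\dots,m$, hence are linearly independent, and there are exactly $\dim_{\mathbb{Q}}V_m=m+1$ of them. Writing $W$ for the space of polynomials $g$ with $\deg_{t_k}g\le m_k$ for every $k$, one has $W=V_{m_1}\otimes_{\mathbb{Q}}\cdots\otimes_{\mathbb{Q}}V_{m_p}$, and a tensor product of bases is a basis; thus the products occurring in the statement form a $\mathbb{Q}$-basis of $W$. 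Since the given $f$ lies in $W$, it admits a unique expansion of the asserted shape with \emph{rational} coefficients $a_{i_1\dots i_p}$. This disposes of both existence (over $\mathbb{Q}$) and the uniqueness claim simultaneously, and it remains only to show that each $a_{i_1\dots i_p}\in\mathbb{Z}$.

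For the integrality I would use finite differences. Let $\Delta_k$ denote the partial forward difference in the $k$-th variable, $(\Delta_k g)(\dots,t_k,\dots)=g(\dots,t_k+1,\dots)-g(\dots,t_k,\dots)$. A short induction using Pascal's rule gives $\Delta_k^{\,j}\binom{t_k+i}{i}=\binom{t_k+i}{i-j}$. Applying $\Delta_1^{i_1}\cdots\Delta_p^{i_p}$ to the expansion $f=\sum_{j_1,\dots,j_p}a_{j_1\dots j_p}\prod_{k=1}^{p}\binom{t_k+j_k}{j_k}$ and then evaluating at the integer point $(t_1,\dots,t_p)=(-(i_1+1),\dots,-(i_p+1))$ kills every term except the one with $j_k=i_k$ for all $k$: the $k$-th factor becomes $\binom{j_k-i_k-1}{j_k-i_k}$, which by the stated conventions equals $1$ when $j_k=i_k$ (it is $\binom{-1}{0}$) and vanishes otherwise (for $j_k<i_k$ the lower index is negative, and for $j_k>i_k$ one has $0\le j_k-i_k-1<j_k-i_k$). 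Hence $a_{i_1\dots i_p}=\big(\Delta_1^{i_1}\cdots\Delta_p^{i_p}f\big)\big(-(i_1+1),\dots,-(i_p+1)\big)$, which is an explicit \emph{integer} linear combination of values of $f$ at integer points.

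The hard part is then to guarantee that these particular values of $f$ are themselves integers, since ``numerical'' only furnishes integrality of $f$ at \emph{large} arguments, whereas the extraction formula evaluates $f$ at small (negative) integer points. I would bridge this gap with the multivariate Newton forward-difference formula. Choosing a base point $a=(s,\dots,s)$ with $s$ large enough that $f$ is integer-valued on $a+\mathbb{N}^p$, one has $f(t)=\sum_{j_1,\dots,j_p}\big(\Delta_1^{j_1}\cdots\Delta_p^{j_p}f\big)(a)\prod_{k=1}^{p}\binom{t_k-s}{j_k}$, the sum being finite by the degree bounds. Each coefficient $\big(\Delta_1^{j_1}\cdots\Delta_p^{j_p}f\big)(a)$ is a signed integer combination of the integer values $f(a+l)$, $0\le l\le (j_1,\dots,j_p)$, hence an integer, while every $\binom{t_k-s}{j_k}$ is integer-valued on all of $\mathbb{Z}$. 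Therefore $f$ is integer-valued on the whole of $\mathbb{Z}^p$. Combined with the extraction formula of the previous paragraph, this forces $a_{i_1\dots i_p}\in\mathbb{Z}$ for every multi-index, completing the proof. The only genuine subtlety I anticipate is bookkeeping for this passage from ``eventually integer-valued'' to ``integer-valued everywhere''; the algebraic core (the basis property and the difference identity $\Delta_k^{\,j}\binom{t_k+i}{i}=\binom{t_k+i}{i-j}$) is routine.
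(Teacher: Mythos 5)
You should first note that the paper does not actually prove this theorem: it is stated with a citation to \cite[Chapter 2]{KLMP}, so there is no in-paper proof to measure your argument against. Your proof is correct and self-contained, and it differs in packaging from the classical argument in the cited source. The classical proof runs by induction on the degree (and on the number of variables): one observes that the forward difference $f(\dots,t_k+1,\dots)-f(\dots,t_k,\dots)$ is again numerical with $\deg_{t_k}$ dropped by one, expands it by the inductive hypothesis, produces an antidifference inside the binomial basis via the identity ${t_k+i\choose i} = \Delta_k{t_k+i\choose i+1}$ together with Pascal's rule, and finally pins down the remaining constant of integration as an integer using the integer values of $f$ at large arguments. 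Your proof replaces this induction with two independent ingredients: (a) the closed-form extraction $a_{i_1\dots i_p}=\bigl(\Delta_1^{i_1}\cdots\Delta_p^{i_p}f\bigr)\bigl(-(i_1+1),\dots,-(i_p+1)\bigr)$, whose case analysis ($j_k<i_k$, $j_k=i_k$, $j_k>i_k$) is handled correctly under the paper's stated conventions for ${t\choose k}$; and (b) the lemma that a numerical polynomial is integer-valued on all of $\mathbb{Z}^p$, obtained from the multivariate Newton expansion at a large base point. Ingredient (b) is genuinely needed in your route, since your extraction formula evaluates $f$ at negative points while numericality only controls large ones, and you supply it correctly; it is also a fact of independent interest that appears in the cited monograph as a separate proposition. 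What your version buys is an explicit formula for every coefficient and the stronger intermediate fact (b); what the inductive version buys is that it never leaves the region of large arguments and so avoids step (b) altogether. Either way, your argument is complete.
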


In what follows (until the end of the section), we deal with subsets of the set $\mathbb{Z}^{m}$ ($m$ is a positive integer). Furthermore, we fix a partition of the set $\mathbb{N}_{m} = \{1,\dots , m\}$ into $p$ disjoint subsets ($p\geq 1$):
\begin{equation}
\mathbb{N}_{m} = \Delta_{1}\cup\Delta_{2}\cup\dots\Delta_{p}
\end{equation}
where $\Delta_{1} = \{1,\dots , m_{1}\}$, $\Delta_{2} = \{m_{1}+1,\dots , m_{1}+m_{2}\},\dots, \Delta_{p} = \{m_{1}+\dots +m_{p-1}+1,\dots , m\}$ ($m_{i} = \Card\Delta_{i}$ for $i=1,\dots, p$; $m_{1}+\dots +m_{p} = m$).

If $a = (a_{1},\dots, a_{m})\in {\bf Z}^{m}$, we denote the numbers $\D\sum_{i=1}^{m_{1}}|a_{i}|$, $\D\sum_{i=m_{1}+1}^{m_{1}+m_{2}}|a_{i}|,\dots,$ $\D\sum_{i=m_{1}+\dots + m_{p-1} + 1}^{m}|a_{i}|$ by $\ord_{1}a,\dots, \ord_{p}a$, respectively; $\ord_{k}a$ ($1\leq k\leq p$) is called the {\em order of $a$ with respect to $\Delta_{k}$}). Furthermore, we consider the set $\mathbb{Z}^{n}$ as the union

\begin{equation}
\mathbb{Z}^{m} = \bigcup_{1\leq j\leq 2^{m}}\mathbb{Z}_{j}^{(m)}
\end{equation}
where $\mathbb{Z}_{1}^{(m)}, \dots, \mathbb{Z}_{2^{m}}^{(m)}$ are all distinct Cartesian products of $m$ sets each of which is either $\mathbb{N}$ or $\mathbb{Z}_{\leq 0}$.
We assume that $\mathbb{Z}_{1}^{(m)} = \mathbb{N}$ and call $\mathbb{Z}_{j}^{(m)}$ the {\em $j$th orthant} of $\mathbb{Z}^{m}$ ($1\leq j\leq 2^{m}$).

The set $\mathbb{Z}^{m}$ will be considered as a partially ordered set with the order $\unlhd$ such that $(e_{1},\dots, e_{m})\unlhd (e'_{1},\dots, e'_{m})$ if and only if $(e_{1},\dots, e_{m})$ and $(e'_{1},\dots, e'_{m})$ lie in the same orthant $\mathbb{Z}_{k}^{m}$ and $(|e_{1}|,\dots, |e_{m}|)\leq_{P}(|e'_{1}|,\dots, |e'_{m}|)$.

In what follows, for any set $A\subseteq\mathbb{Z}^{m}$, $W_{A}$ will denote the set of all elements of $\mathbb{Z}^{m}$ that do not exceed any element of $A$ with respect to the order $\unlhd$. Furthermore, for any $r_{1},\dots r_{p}\in\mathbb{N}$, $A(r_{1},\dots r_{p})$ will denote the set of all elements $x = (x_{1},\dots, x_{m})\in A$ such that $\ord_{i}x\leq r_{i}$ ($i=1,\dots, p$).

The above notation can be naturally applied to subsets of $\mathbb{N}^{m}$ (treated as a subset of $\mathbb{Z}^{m}$). If $E\subseteq\mathbb{N}^{m}$ and $s_{1},\dots, s_{p}\in\mathbb{N}$, then $E(s_{1},\dots, s_{p})$ will denote the set of all $m$-tuples $e\in E$ such that $\ord_{i}e\leq s_{i}$ for $i=1,\dots, p$. Furthermore, we shall associate with a set $E\subseteq\mathbb{N}^{m}$ a set
$V_{E} =\{v\in\mathbb{N}^{m}\,|\,  v$ is not greater than or equal to any $m$-tuple in $E$ with respect to $\leq_{P}\}$. (Thus, $v=(v_{1}, \dots , v_{m})\in V_{E}$ if and only if for any element  $(e_{1},\dots , e_{m})\in E$, there exists $i\in\{1,\dots, m\}$ such that $e_{i} > v_{i}$.)

The following two theorems proved in \cite[Chapter 2]{KLMP} generalize the well-known Kolchin's result on the (univariate) numerical polynomials of subsets of $\mathbb{N}^{m}$ (see \cite[Chapter 0, Lemma 16]{Kolchin2}) and give explicit formulas for multivariate numerical polynomials associated with finite subsets of $\mathbb{N}^{m}$ and $\mathbb{Z}^{m}$.

\begin{theorem}
Let $E\subseteq\mathbb{N}^{m}$ and let partition (2) of $\mathbb{N}_{m}$ be fixed. Then there exists a numerical polynomial $\omega_{A}(t_{1},\dots, t_{p})$ such that

{\em (i)} \,  $\omega_{E}(r_{1},\dots, r_{p}) = \Card V_{A}(r_{1},\dots, r_{p})$ for all sufficiently large $(r_{1},\dots, r_{p})\in\mathbb{N}^{p}$.

{\em (ii)} \, The total degree $\deg\,\omega_{E}$ of the polynomial $\omega_{E}$ does not exceed $m$ and $\deg_{t_{i}}\omega_{E}\leq m_{i}$  ($1\leq i\leq p$).

{\em (iii)} \, $\deg\,\omega_{E} = m$ if and only if $E=\emptyset$. Then  $\omega_{E}(t_{1},\dots, t_{p}) = \D\prod_{i=1}^{p}{t_{i}+m_{i}\choose m_{i}}$.
\end{theorem}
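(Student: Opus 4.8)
The plan is to reduce the assertion to the case of a finite set $E$ and then count the points of $V_{E}(r_{1},\dots,r_{p})$ by inclusion--exclusion, the base case being $E=\emptyset$, which gives the product of binomial coefficients directly.

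First I would observe that $V_{E}$ depends only on the set $E^{*}$ of minimal elements of $E$ with respect to $\leq_{P}$: by definition $v\in V_{E}$ means $v\not\geq_{P}e$ for every $e\in E$, and this fails for some $e\in E$ exactly when it fails for some $\leq_{P}$-minimal such $e$. Since $\leq_{P}$ is a well-partial-order on $\mathbb{N}^{m}$ (Dickson's Lemma), $E^{*}$ is a finite antichain, and $E=\emptyset$ if and only if $E^{*}=\emptyset$. Thus it suffices to treat a finite set $E=\{e^{(1)},\dots,e^{(k)}\}$, for which the definition of $V_{E}$ reads $V_{E}=\mathbb{N}^{m}\setminus\bigcup_{j=1}^{k}\{v\in\mathbb{N}^{m}\mid v\geq_{P}e^{(j)}\}$.

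Next I would apply inclusion--exclusion to this union. For $J\subseteq\{1,\dots,k\}$ set $e^{J}=(\max_{j\in J}e^{(j)}_{1},\dots,\max_{j\in J}e^{(j)}_{m})$, with $e^{\emptyset}=(0,\dots,0)$, so that $v\geq_{P}e^{(j)}$ for all $j\in J$ is equivalent to $v\geq_{P}e^{J}$. The key computation is the count of $\{v\in\mathbb{N}^{m}\mid v\geq_{P}b,\ \ord_{i}v\leq r_{i}\}$ for a fixed $b\in\mathbb{N}^{m}$: the substitution $v=b+w$ ($w\in\mathbb{N}^{m}$) is a bijection onto $\{v\geq_{P}b\}$ under which $\ord_{i}v=\ord_{i}b+\ord_{i}w$, because all coordinates are nonnegative and the orders add blockwise over the disjoint blocks $\Delta_{1},\dots,\Delta_{p}$. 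Hence the count factors over the blocks, and for $r_{i}\geq\ord_{i}b$ the number of admissible $w$ in the block $\Delta_{i}$ is $\binom{r_{i}-\ord_{i}b+m_{i}}{m_{i}}$. Combining this with inclusion--exclusion gives, for all sufficiently large $(r_{1},\dots,r_{p})$,
\[
\Card V_{E}(r_{1},\dots,r_{p})=\sum_{J\subseteq\{1,\dots,k\}}(-1)^{|J|}\prod_{i=1}^{p}\binom{r_{i}-\ord_{i}e^{J}+m_{i}}{m_{i}},
\]
which is the value at $(r_{1},\dots,r_{p})$ of the numerical polynomial
\[
\omega_{E}(t_{1},\dots,t_{p})=\sum_{J\subseteq\{1,\dots,k\}}(-1)^{|J|}\prod_{i=1}^{p}\binom{t_{i}-\ord_{i}e^{J}+m_{i}}{m_{i}},
\]
establishing existence and part (i).

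Finally I would read off (ii) and (iii) from this closed form. Each summand has $\deg_{t_{i}}=m_{i}$ and total degree $m$, so $\deg_{t_{i}}\omega_{E}\leq m_{i}$ and $\deg\omega_{E}\leq m$, giving (ii). For (iii), the polynomial $\binom{t_{i}-c+m_{i}}{m_{i}}$ has, for every constant $c$, the same homogeneous component of top degree $m_{i}$, namely $t_{i}^{m_{i}}/m_{i}!$; hence the total-degree-$m$ part of $\omega_{E}$ equals $\bigl(\prod_{i=1}^{p}t_{i}^{m_{i}}/m_{i}!\bigr)\sum_{J}(-1)^{|J|}$. Since $\sum_{J\subseteq\{1,\dots,k\}}(-1)^{|J|}$ equals $0$ for $k\geq1$ and $1$ for $k=0$, the top-degree part survives exactly when $E=\emptyset$, in which case only the term $J=\emptyset$ remains and $\omega_{E}=\prod_{i=1}^{p}\binom{t_{i}+m_{i}}{m_{i}}$. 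The one point needing care is the passage from the numerical identity to the polynomial identity: the binomial $\binom{r_{i}-\ord_{i}e^{J}+m_{i}}{m_{i}}$ counts correctly only for $r_{i}\geq\ord_{i}e^{J}$, but the binomial polynomial agrees with the count for all large arguments, and since (i) is asserted only for sufficiently large $(r_{1},\dots,r_{p})$ this causes no difficulty.
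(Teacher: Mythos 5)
Your proof is correct, and it takes a genuinely different (and self-contained) route from the paper's: the paper gives no argument for this theorem at all, deferring to [KLMP, Chapter 2], where the existence of $\omega_{E}$ is established for an arbitrary subset $E$ by an inductive argument in the style of Kolchin's univariate lemma, and where the explicit inclusion--exclusion formula for finite $E$ (reproduced in this paper as Theorem 2.6) and the reduction to the set of minimal elements (Remark 2.7) appear as separate, subsequent statements. You invert that logical order: Dickson's lemma reduces everything to the finite antichain $E^{*}$, the substitution $v=e^{J}+w$ combined with inclusion--exclusion yields the closed form
\[
\omega_{E}(t_{1},\dots,t_{p})=\sum_{J}(-1)^{|J|}\prod_{i=1}^{p}\binom{t_{i}-\ord_{i}e^{J}+m_{i}}{m_{i}},
\]
which is exactly formula (4) of Theorem 2.6 (your $\ord_{i}e^{J}$ is the paper's $b_{\theta i}$), and then existence, the degree bounds in (ii), and part (iii) are all read off from this expression --- (iii) in particular reduces to the transparent identity $\sum_{J\subseteq\{1,\dots,k\}}(-1)^{|J|}=(1-1)^{k}$, which vanishes precisely when $k\geq 1$. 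In effect you prove Theorem 2.4, Theorem 2.6, and Remark 2.7 simultaneously, which is more economical than the paper's presentation and delivers the computational algorithm for free; what the cited inductive proof buys instead is an existence argument that does not depend on deriving an explicit combinatorial formula, which is the template that adapts more readily to other settings. You also correctly isolated the one delicate point, namely that $\binom{r_{i}-\ord_{i}e^{J}+m_{i}}{m_{i}}$ agrees with the count only for $r_{i}\geq\ord_{i}e^{J}$, and that this is harmless since (i) is asserted only for sufficiently large arguments.
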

\begin{definition}
The polynomial $\omega_{E}(t_{1},\dots, t_{p})$ is called the dimension polynomial of the set $E\subseteq\mathbb{N}^{m}$ associated with partition (2) of $\mathbb{N}_{m}$.
\end{definition}

\begin{theorem}
Let $E = \{e_{1}, \dots, e_{q}\}$ ($q\geq 1$) be a finite subset of $\mathbb{N}^{m}$ and let partition {\em (2)} of $\mathbb{N}_{m}$ be fixed. Let $e_{i} = (e_{i1}, \dots, e_{im})$ \, ($1\leq i\leq q$) and for any
$l\in\mathbb{N}$, $0\leq l\leq q$, let $\Theta(l,q)$ denote the set of all $l$-element subsets of $\mathbb{N}_{q} = \{1,\dots, q\}$. Let $\bar{e}_{\emptyset j} = 0$ and for any $\theta\in\Theta(l,q)$, $\theta\neq \emptyset$, let $\bar{e}_{\theta j} = \max \{e_{ij}\,|\,i\in \theta\}$  ($1\leq j\leq m$). Furthermore, let  $b_{\theta k} =\D\sum_{h\in\Delta_{k}}\bar{e}_{\theta h}$ ($k = 1,\dots, p$).
Then
\begin{equation}
\omega_{E}(t_{1},\dots, t_{p}) = \D\sum_{l=0}^{q}(-1)^{l}\D\sum_{\theta\in\Theta(l,\, q)}\D\prod_{j=1}^{p}{t_{j}+m_{j} - b_{\theta j}\choose m_{j}}.
\end{equation}
\end{theorem}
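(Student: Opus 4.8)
The plan is to count $\Card V_E(r_1,\dots,r_p)$ directly by inclusion--exclusion, exploiting the fact that $V_E$ is the complement in $\mathbb{N}^m$ of a union of ``upper boxes'' determined by the points of $E$.

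First, for each $i\in\mathbb{N}_q$ I would set $A_i=\{v=(v_1,\dots,v_m)\in\mathbb{N}^m\mid e_i\leq_{P}v\}=\{v\mid v_j\geq e_{ij}\text{ for all }j\}$. By the definition of $V_E$, a point $v$ fails to lie in $V_E$ precisely when $e_i\leq_{P}v$ for some $i$, so $V_E=\mathbb{N}^m\setminus\bigcup_{i=1}^{q}A_i$. Since intersecting with the order-bound conditions $\ord_j(\cdot)\leq r_j$ ($1\leq j\leq p$) and taking cardinalities are compatible with finite unions and complements, the inclusion--exclusion principle gives
\[ \Card V_E(r_1,\dots,r_p)=\Card\mathbb{N}^m(r_1,\dots,r_p)-\sum_{\emptyset\neq\theta\subseteq\mathbb{N}_q}(-1)^{|\theta|+1}\,\Card\Bigl(\bigcap_{i\in\theta}A_i\Bigr)(r_1,\dots,r_p). \]

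Next I would identify each intersection. For a nonempty $\theta$, the condition $v\in\bigcap_{i\in\theta}A_i$ reads $v_j\geq\max\{e_{ij}\mid i\in\theta\}=\bar{e}_{\theta j}$ for every $j$, so $\bigcap_{i\in\theta}A_i$ is again an upper box, now anchored at the componentwise maximum $(\bar{e}_{\theta 1},\dots,\bar{e}_{\theta m})$. Its points satisfying $\ord_k v\leq r_k$ decouple across the blocks $\Delta_1,\dots,\Delta_p$, so the count factors as a product over $k$. For block $\Delta_k$ I substitute $v_j=\bar{e}_{\theta j}+w_j$ ($j\in\Delta_k$, $w_j\in\mathbb{N}$); the constraint $\sum_{j\in\Delta_k}v_j\leq r_k$ becomes $\sum_{j\in\Delta_k}w_j\leq r_k-b_{\theta k}$ with $b_{\theta k}=\sum_{j\in\Delta_k}\bar{e}_{\theta j}$. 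For $r_k\geq b_{\theta k}$ the number of such nonnegative integer tuples (a stars-and-bars count in $m_k$ variables with an upper bound, equivalently $m_k+1$ variables summing to $r_k-b_{\theta k}$) equals $\binom{r_k-b_{\theta k}+m_k}{m_k}$. Hence, for all $(r_1,\dots,r_p)$ with every $r_k$ large enough,
\[ \Card\Bigl(\bigcap_{i\in\theta}A_i\Bigr)(r_1,\dots,r_p)=\prod_{j=1}^{p}\binom{r_j+m_j-b_{\theta j}}{m_j}. \]
The same formula covers $\theta=\emptyset$: with the convention $\bar{e}_{\emptyset j}=0$, hence $b_{\emptyset k}=0$, the product is $\prod_j\binom{r_j+m_j}{m_j}$, which is exactly $\Card\mathbb{N}^m(r_1,\dots,r_p)$.

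Finally I would reassemble. Substituting these product formulas into the inclusion--exclusion identity and absorbing the $\mathbb{N}^m$ term as the $\theta=\emptyset$ summand turns the right-hand side into the alternating sum $\sum_{\theta\subseteq\mathbb{N}_q}(-1)^{|\theta|}\prod_j\binom{r_j+m_j-b_{\theta j}}{m_j}$; grouping subsets by cardinality $l=|\theta|$ yields precisely the claimed expression (4). Since that expression is a numerical polynomial in $t_1,\dots,t_p$ and it agrees with $\Card V_E(r_1,\dots,r_p)$ for all sufficiently large $(r_1,\dots,r_p)\in\mathbb{N}^p$, it must coincide with the polynomial $\omega_E$ whose existence is guaranteed by the preceding theorem, as a numerical polynomial is uniquely determined by its values on a cofinite region of $\mathbb{N}^p$. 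The one point to watch is that the binomial-coefficient count is exact only once each $r_k\geq b_{\theta k}$ (so that the per-block shift is nonnegative for every $\theta$ simultaneously); this is the main piece of bookkeeping, but it is harmless, since $\omega_E$ is characterized by its eventual behavior and that inequality holds for all sufficiently large arguments.
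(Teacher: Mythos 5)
Your proof is correct, and it is essentially the expected argument: the paper itself gives no proof of this theorem (it defers to \cite[Chapter 2]{KLMP}), and your route---writing $V_{E}$ as the complement of the union of the upper boxes $A_{i}$, identifying $\bigcap_{i\in\theta}A_{i}$ as the upper box anchored at the componentwise maximum, counting each box per block $\Delta_{k}$ by stars and bars to get $\binom{r_{k}+m_{k}-b_{\theta k}}{m_{k}}$, and then invoking uniqueness of the numerical polynomial of Theorem 2.4---is precisely the standard inclusion--exclusion proof given in that reference. The only wording to fix is ``cofinite region'': the set $\{(r_{1},\dots,r_{p})\in\mathbb{N}^{p}\mid r_{i}\geq N_{i}\ \text{for all}\ i\}$ is not cofinite in $\mathbb{N}^{p}$ for $p\geq 2$, but agreement of two polynomials on such a corner set (which is what your argument actually establishes and uses) does force them to coincide.
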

{\begin{remark} Clearly, if $E\subseteq\mathbb{N}^{m}$ and $E^{\ast}$ is the set of all minimal elements of $E$ with respect to the product order, then the set $E^{\ast}$ is finite and $\omega_{E}(t_{1}, \dots, t_{p}) = \omega_{E^{\ast}}(t_{1}, \dots, t_{p})$. Thus, the last theorem gives an algorithm that allows one to find the dimension polynomial of any subset of  $\mathbb{N}^{m}$  (and with a given partition (2) of $\mathbb{N}_{m}$): one should first find the set of all minimal points of the subset and then apply Theorem 2.6.
\end{remark}

The following theorem proved in \cite[Section 2.5]{KLMP} provides analogs of the results of Theorems 2.4 -  2.6 for subsets of $\mathbb{Z}^{m}$.

\begin{theorem}
Let $A\subseteq\mathbb{Z}^{m}$ and let partition {\em (2)} of the set $\mathbb{N}_{m}$ be fixed. Then there exists a numerical polynomial in $p$ variables $\phi_{A}(t_{1},\dots, t_{p})$  such that

{\em (i)}\, $\phi_{A}(r_{1},\dots, r_{p}) = \Card W_{A}(r_{1},\dots, r_{p})$ for all sufficiently large $p$-tuples $(r_{1},\dots, r_{p})\in\mathbb{N}^{p}$.

{\em (ii)}\, $\deg\phi_{A}\leq m$ and $\deg_{t_{i}}\phi_{A}\leq m_{i}$ for $i=1,\dots, p$. Furthermore, if the polynomial $\phi_{A}(t_{1},\dots, t_{p})$ is written in the form (1), then $2^{m}|a_{m_{1}\dots m_{p}}$.

{\em (iii)}\, Let us consider a mapping $\rho:\mathbb{Z}^{m}\longrightarrow\mathbb{N}^{2m}$ such that
$$\rho((e_{1},\dots, e_{m}) =(\max \{e_{1}, 0\}, \dots, \max \{e_{m}, 0\}, \max \{-e_{1}, 0\}, \dots, \max \{-e_{m}, 0\}).$$
Let $B = \rho(A)\bigcup \{\bar{e}_{1},\dots, \bar{e}_{m}\}$ where $\bar{e}_{i}$ ($1\leq i\leq m$) is a $2m$-tuple in $\mathbb{N}^{2m}$ whose $i$th and $(m+i)$th coordinates are equal to 1 and all other coordinates are equal to 0. Then
$$\phi_{A}(t_{1}, \dots, t_{p}) = \omega_{B}(t_{1}, \dots, t_{p})$$
where $\omega_{B}(t_{1},\dots,  t_{p})$ is the dimension polynomial of the set $B$ (see {\em Definition 2.5}) associated with the following partition of the set $\mathbb{N}_{2m}${\em :}
$\mathbb{N}_{2m} = \Delta'_{1}\cup\Delta_{2}\cup\dots\Delta'_{p}$ where $\Delta'_{i} = \Delta_{i}\cup\{m+k\,|\,k\in\Delta_{i}\}$ for $i=1,\dots, p$ (see partition {\em (2)}).

{\em (iv)}\, If $A = \emptyset$, then
\begin{equation}\label{eq:7}
\phi_{A}(t_{1}, \dots, t_{p}) =
\prod_{j=1}^{p}\left[\sum_{i=0}^{m_{j}}(-1)^{m_{j}-i}2^{i}{m_{j}\choose i}{{t_{j}+i}\choose i}\right].
\end{equation}
\end{theorem}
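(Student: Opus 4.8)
The plan is to transport the counting problem for $W_A\subseteq\mathbb{Z}^m$ to a counting problem for $V_B\subseteq\mathbb{N}^{2m}$ through the separation map $\rho$, and then invoke Theorems 2.4 and 2.6, which already settle the $\mathbb{N}$-case. Writing $e^+_i=\max\{e_i,0\}$ and $e^-_i=\max\{-e_i,0\}$, so that $e_i=e^+_i-e^-_i$ and $|e_i|=e^+_i+e^-_i$, the map $\rho$ is a bijection of $\mathbb{Z}^m$ onto the subset $N=\{w\in\mathbb{N}^{2m}\mid w_iw_{m+i}=0,\ 1\leq i\leq m\}$. Because $\Delta'_k=\Delta_k\cup\{m+h\mid h\in\Delta_k\}$, one has $\ord_k\rho(e)=\sum_{h\in\Delta_k}(e^+_h+e^-_h)=\sum_{h\in\Delta_k}|e_h|=\ord_k e$ for every $k$, so $\rho$ respects the partition-orders exactly.

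First I would prove that $\rho$ is an order isomorphism of $(\mathbb{Z}^m,\unlhd)$ onto $(N,\leq_{P})$: for $v,a\in\mathbb{Z}^m$ one has $a\unlhd v$ if and only if $\rho(a)\leq_{P}\rho(v)$. The implication $\Rightarrow$ is immediate from $|a_h|\leq|v_h|$ and sign compatibility inside a common orthant. For $\Leftarrow$ I would argue coordinatewise, distinguishing $a_h>0$, $a_h<0$, $a_h=0$: the inequalities $a^+_h\leq v^+_h$ and $a^-_h\leq v^-_h$ force $v_h$ to have the same sign as $a_h$ whenever $a_h\neq 0$ and impose no sign constraint when $a_h=0$, which is exactly what is needed to place $v$ and $a$ in a common orthant with $|a_h|\leq|v_h|$. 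This verification, together with the fact that the orthants $\mathbb{Z}^{(m)}_j$ overlap along the coordinate hyperplanes (since $0$ lies in both $\mathbb{N}$ and $\mathbb{Z}_{\leq 0}$), is the step I expect to require the most care, because ``lying in the same orthant'' must be read throughout as ``admitting a common orthant.''

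Next I would record the role of the auxiliary points $\bar e_1,\dots,\bar e_m$. Any $w\in\mathbb{N}^{2m}\setminus N$ has $w_i\geq 1$ and $w_{m+i}\geq 1$ for some $i$, hence $w\geq_{P}\bar e_i$ and $w\notin V_B$; thus $V_B\subseteq N$. Combining this with the order isomorphism gives, for $v\in\mathbb{Z}^m$, the chain $v\in W_A\Leftrightarrow(a\not\unlhd v\text{ for all }a\in A)\Leftrightarrow(\rho(a)\not\leq_{P}\rho(v)\text{ for all }a\in A)\Leftrightarrow\rho(v)\in V_B$, the constraints coming from the $\bar e_i$ being automatic on $N$. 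Since $\rho$ preserves each $\ord_k$, this restricts to a bijection $W_A(r_1,\dots,r_p)\to V_B(r_1,\dots,r_p)$, so $\Card W_A(r_1,\dots,r_p)=\Card V_B(r_1,\dots,r_p)$ for all $(r_1,\dots,r_p)$. Theorem 2.4 then produces the numerical polynomial $\phi_A:=\omega_B$ satisfying (i), which is precisely the identity asserted in (iii), and Theorem 2.6 applied to $B$ with the partition $\{\Delta'_k\}$ makes it computable. For (iv), specializing $A=\emptyset$ gives $B=\{\bar e_1,\dots,\bar e_m\}$ and $W_\emptyset=\mathbb{Z}^m$; as the blocks $\Delta_1,\dots,\Delta_p$ are independent, $\Card W_\emptyset(r_1,\dots,r_p)$ factors as $\prod_{j=1}^p\Card\{x\in\mathbb{Z}^{m_j}\mid\sum_l|x_l|\leq r_j\}$, and each factor is the $\ell^1$-ball count $\sum_{i=0}^{m_j}(-1)^{m_j-i}2^i\binom{m_j}{i}\binom{t_j+i}{i}$, which I would confirm either by a direct sign-and-support count or by evaluating the formula of Theorem 2.6 on $B=\{\bar e_1,\dots,\bar e_m\}$.

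Finally, for (ii) the bound $\deg_{t_i}\phi_A\leq m_i$ does not follow from Theorem 2.4 applied to $B$, which only yields $\deg_{t_i}\omega_B\leq\Card\Delta'_i=2m_i$; instead I would use the inclusion $W_A\subseteq W_\emptyset$, giving $\phi_A(r)\leq\phi_\emptyset(r)$ for large $r$. Since $\phi_A$ is nonnegative-valued and is dominated, variable by variable, by $\phi_\emptyset$, whose degree in $t_i$ is exactly $m_i$ by (iv), we get $\deg_{t_i}\phi_A\leq m_i$ and hence $\deg\phi_A\leq\sum_i m_i=m$. The divisibility $2^m\mid a_{m_1\dots m_p}$ is the delicate remaining point: I would extract the coefficient of the top monomial $\prod_j\binom{t_j+m_j}{m_j}$ by decomposing $\mathbb{Z}^m$ into its $2^m$ orthants, reflecting each into $\mathbb{N}^m$ and reading off the leading coefficient of the resulting $\mathbb{N}^m$-dimension polynomials (the pairwise intersections, lying on coordinate hyperplanes, contribute only lower-degree terms). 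Grouping these contributions according to the free transitive action of the coordinate sign-change group $(\mathbb{Z}/2)^m$ on the $2^m$ orthants is what should force the total to be a multiple of $2^m$, consistent with the value $a_{m_1\dots m_p}=2^m$ found in (iv). This leading-coefficient bookkeeping, rather than the reduction itself, is where I expect the real work to lie.
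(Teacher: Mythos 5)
Most of your proof is correct, and it is essentially the intended argument: the paper itself gives no proof of this theorem (it cites [KLMP, Section 2.5]), and the reduction you carry out via $\rho$ is precisely the method that part (iii) of the statement encodes. Your verification that $a\unlhd v$ if and only if $\rho(a)\leq_{P}\rho(v)$ (with ``same orthant'' correctly read as ``some common orthant''), the observation that the points $\bar{e}_{1},\dots,\bar{e}_{m}$ force $V_{B}$ into the image $N$ of $\rho$, the resulting bijection $W_{A}(r_{1},\dots,r_{p})\to V_{B}(r_{1},\dots,r_{p})$, the factorization argument for (iv), and your repair of the degree bound in (ii) -- you rightly note that Theorem 2.4 applied to $B$ only yields $\deg_{t_{i}}\leq\Card\Delta_{i}'=2m_{i}$, and fix this by dominating $\phi_{A}$ by $\phi_{\emptyset}$ -- are all sound.

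The genuine gap is the divisibility $2^{m}\mid a_{m_{1}\dots m_{p}}$, and it cannot be closed along the lines you sketch. The $(\mathbb{Z}/2)^{m}$-action on the orthants is free and transitive, but an orthant's contribution to the top coefficient depends on whether $A$ meets that orthant, so the contributions are not constant along the orbit and nothing forces their sum to be a multiple of $2^{m}$. Indeed, carrying your own decomposition to the end: modulo polynomials of total degree at most $m-1$ (coming from points with a zero coordinate), $\phi_{A}\equiv\sum_{j=1}^{2^{m}}\omega_{E_{j}}$, where $E_{j}=\{(|a_{1}|,\dots,|a_{m}|)\,:\,a\in A\cap\mathbb{Z}^{(m)}_{j}\}\subseteq\mathbb{N}^{m}$; by Theorem 2.4(iii) only those $j$ with $E_{j}=\emptyset$ contribute to the coefficient of $t_{1}^{m_{1}}\cdots t_{p}^{m_{p}}$, each contributing $1/(m_{1}!\cdots m_{p}!)$, so $a_{m_{1}\dots m_{p}}=\Card\{j\,:\,A\cap\mathbb{Z}^{(m)}_{j}=\emptyset\}$. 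This can be any integer between $0$ and $2^{m}$. Concretely, for $m=p=1$ and $A=\{1\}$ one has $W_{A}=\mathbb{Z}_{\leq 0}$, so $\phi_{A}(t)=t+1=\binom{t+1}{1}$ and $a_{1}=1$, which is not divisible by $2$. Thus the divisibility in part (ii), as stated for arbitrary $A$ under the paper's definitions of $\unlhd$ and $W_{A}$, is false; it holds exactly when $A=\emptyset$ or $A$ meets every orthant -- a condition satisfied by the leader-exponent sets to which the paper actually applies the theorem (in Example 4.4 the set $\{(a,0,0),(-a-1,0,0)\}$ meets all eight orthants), but not in general. The correct conclusion of your argument is that (i), (iii), (iv) and the degree bounds hold for every $A$, while the divisibility requires this additional hypothesis; no amount of leading-coefficient bookkeeping will rescue it in full generality.
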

\begin{definition}
The polynomial $\phi_{A}(t_{1},\dots, t_{p})$ is called the {\bf dimension polynomial} of the set $A\subseteq\mathbb{Z}^{m}$ associated with partition (2) of $\mathbb{N}_{m}$.
\end{definition}

\begin{remark}
The equality (5) (as well as the last part of Theorem 2.1) expresses the fact that the number of solutions $(x_{1},\dots x_{m})\in\mathbb{Z}^{m}$ of the inequality $|x_{1}|+\dots + |x_{m}|\leq r$ ($r\in\mathbb{N}$) is $\D\sum_{k=0}^{m}(-1)^{m-k}2^{k}{n\choose k}{{r+k}\choose k}$ (see \cite[Proposition 2.1.9]{KLMP}). It follows that if $r_{1},\dots, r_{p}, s_{1},\dots, s_{p}\in\mathbb{N}$, $s_{i} < r_{i}$ ($1\leq i\leq p$), and  $B = \{b=(b_{1},\dots, b_{m})\in\mathbb{Z}^{m}\,|\,s_{i}\leq \sum_{\nu\in\Delta_{i}}|b_{\nu}|\leq r_{i}$ for $i=1,\dots, p\}$, (with the fixed partition (2) of $\mathbb{N}_{m}$), then
$$\Card B = \prod_{i=1}^{p}\left[\sum_{j=0}^{m_{i}}(-1)^{m_{i}-j}2^{j}{m_{i}\choose j}\left({{r_{i}+j}\choose j} - {{s_{i}+j-1}\choose j}\right)\right].$$
We will use this observation in the proof of Theorem 4.1.
\end{remark}

\section{$E$-reduction of inversive difference \\polynomials. $E$-characteristic sets}

Let $K$ be an inversive difference field with a basic set $\sigma = \{\alpha_{1},\dots, \alpha_{m}\}$.  Let us fix a partition of the set $\sigma$, that is, its representation as a union of $p$ disjoint subsets ($p\geq 1$):
\begin{equation}
\sigma = \sigma_{1}\bigcup \dots \bigcup \sigma_{p}\end{equation}
$$\text{where}\,\,\,  \sigma_{1} = \{\alpha_{1},\dots, \alpha_{m_{1}}\},\, \sigma_{2} = \{\alpha_{m_{1}+1},\dots, \alpha_{m_{1}+m_{2}}\}, \,\dots,$$
$$\sigma_{p} = \{\alpha_{m_{1}+\dots + m_{p-1}+1},\dots, \alpha_{m}\}\, \,\, \, (m_{1}+\dots + m_{p} = m).$$
If \,\,$\gamma  = \alpha_{1}^{k_{1}}\dots \alpha_{n}^{k_{n}}\in\Gamma$ ($k_{i}\in\mathbb{Z}$) then the order of $\gamma$ with respect to $\sigma_{i}$  ($1\leq i\leq p$) is defined as $\sum_{\nu = m_{1}+\dots + m_{i-1}+1}^{m_{1}+\dots + m_{i}}|k_{\nu}|$; it is denoted by $\ord_{i}\gamma$. If $i=1$, the last sum is replaced by $\sum_{\nu =1}^{m_{1}}|k_{\nu}|$. Furthermore, for any  $r_{1},\dots, r_{p}\in {\bf N}$, we set $\Gamma(r_{1},\dots, r_{p}) = \{\gamma\in\Gamma\,|\,\ord_{i}\gamma\leq r_{i} \,\,(i=1,\dots, p)\}.$

Let us consider $p$ total orderings $<_{1}, \dots, <_{p}$ of the group $\Gamma$ such that

\medskip

\noindent$\gamma  = \alpha_{1}^{k_{1}}\dots \alpha_{m}^{k_{m}}<_{i} \gamma'  = \alpha_{1}^{k'_{1}}\dots\alpha_{m}^{k'_{m}}$  ($1\leq i\leq p$)  if and only if the $(2m+p)$-tuple

\medskip

\noindent $(\ord_{i}\gamma, \ord_{1}\gamma,\dots, \ord_{i-1}\gamma, \ord_{i+1}\gamma, \dots, \ord_{p}\gamma, |k_{m_{1}+\dots + m_{i-1}+1}|,\dots, $\\
$|k_{m_{1}+\dots + m_{i}}|, k_{m_{1}+\dots + m_{i-1}+1},\dots,  k_{m_{1}+\dots,+ m_{i}}, |k_{1}|,\dots, |k_{m_{1}+\dots + m_{i-1}}|, $\\
$|k_{m_{1}+\dots + m_{i}+1}, \dots, |k_{m}|, k_{1},\dots, k_{m_{1}+\dots + m_{i-1}}, k_{m_{1}+\dots + m_{i}+1},\dots, k_{m})$

\noindent is less than the corresponding $(2m+p)$-tuple for $\gamma'$ with respect to the lexicographic order on $\mathbb{Z}^{2m+p}$.

Two elements $\gamma_{1} = \alpha_{1}^{k_{1}}\dots \alpha_{m}^{k_{m}}$ and $\gamma_{2} = \alpha_{1}^{l_{1}}\dots \alpha_{n}^{l_{m}}$ in $\Gamma$ are called {\em similar\/}, if the $m$-tuples $(k_{1}, \dots, k_{m})$ and $(l_{1}, \dots, l_{m})$ belong to the same orthant of $\mathbb{Z}^{m}$ (see (3)\,). In this case we write $\gamma_{1}\sim\gamma_{2}$. We say that $\gamma_{1}$ {\em divides} $\gamma_{2}$ (or $\gamma_{2}$ is a {\em multiple} of $\gamma_{1}$) and write $\gamma_{1}|\gamma_{2}$ if $\gamma_{1}\sim\gamma_{2}$ and there exists $\gamma\in\Gamma$ such that $\gamma\sim \gamma_{1}$ and $\gamma_{2} = \gamma\gamma_{1}$.

Let $R = K\{y_{1},\dots, y_{n}\}^{\ast}$ the algebra of $\sigma^{\ast}$-polynomials in $\sigma^{\ast}$-indeterminates $y_{1},\dots, y_{n}$ over $K$. Then $R$ can be viewed as a polynomial ring in the set of indeterminates $\Gamma Y = \{\gamma y_{i}\,|\,\gamma\in\Gamma, 1\leq i\leq n\}$ whose elements are called {\em terms}. For every $j=1,\dots, p$, we define the  order of a term $u=\gamma y_{i}$ with respect to $\sigma_{j}$ (denoted by $\ord_{j}u$)  as the corresponding order of $\gamma$. Furthermore, considering representation (3) of $\mathbb{Z}^{m}$ as the union of $2^{m}$ orthants $\mathbb{Z}_{j}^{m}$, we set
$\Gamma_{j} = \{\alpha_{1}^{k_{1}}\dots \alpha_{m}^{k_{m}}\in\Gamma\,|\,(k_{1},\dots, k_{m})\in\mathbb{Z}_{j}^{m}\}$ and $\Gamma_{j}Y = \{\gamma y_{i}\,|\,\gamma\in\Gamma_{j}, 1\leq i\leq n\}$.

Two terms $u=\gamma y_{i}$ and $v=\gamma' y_{j}$ are called {\em similar} if $\gamma$ and $\gamma'$ are similar; in this case we write $u\sim v$. If $u = \gamma y_{i}$ is a term and $\gamma'\in\Gamma$, we say that $u$ is similar to $\gamma'$ and write $u\sim\gamma'$ if $\gamma\sim\gamma'$. Clearly, if $u\in\Gamma Y$, $\gamma\in\Gamma$ and $\gamma\sim u$, then
$\ord_{j}(\gamma u) = \ord_{j}\gamma + \ord_{j}u$ for $j=1,\dots, p$. Furthermore, if $u, v\in\Gamma Y$, we say that $u$ {\em divides} $v$ (or $v$ is a {\em transform} or a {\em multiple} of $u$) and write $u\,|\,v$, if $u=\gamma' y_{i}$, $v=\gamma'' y_{i}$ for some $y_{i}$ and $\gamma'|\gamma''$. (If $\gamma'' = \gamma\gamma'$ for some $\gamma\in\Gamma, \,\gamma\sim\gamma'$, we write $\D\frac{v}{u}$ for $\gamma$.)

We consider $p$ orders $<_{1},\dots, <_{p}$ on the set $\Gamma Y$ that correspond to the orders on the group $\Gamma$ (we use the same symbols for the orders on $\Gamma$ and $\Gamma Y$). These orders are defined as follows: $\gamma y_{j} <_{i}\gamma' y_{k}$ if and only if $\gamma <_{i}\gamma'$ in $\Gamma$ or $\gamma = \gamma'$ and $j < k$ ($1\leq i\leq p,\, 1\leq j, k\leq n$).

\begin{definition}
Let $f\in K\{y_{1},\dots, y_{n}\}^{\ast}\setminus K$ and $1\leq k\leq p$. Then the greatest with respect to $<_{k}$ term that appears in $f$ is called the {\bf $k$-leader} of the $\sigma^{\ast}$-polynomial $f$; it is denoted by $u_{f}^{(k)}$.  The smallest with respect to $<_{k}$ term in $f$ is called the {\bf $k$-coleader} of $f$ and is denoted by $v_{f}^{(k)}$.
\end{definition}

\begin{definition} Let $f\in K\{y_{1},\dots, y_{n}\}\setminus K$ and let $u_{f}^{(k)} = \alpha_{1}^{k_{1}}\dots \alpha_{m}^{k_{m}}y_{i}$ and $v_{f}^{(k)} = \alpha_{1}^{l_{1}}\dots \alpha_{m}^{l_{m}}y_{j}$ be the $k$-leader and $k$-coleader of $f$, respectively ($1\leq k\leq p$). Then for every $k=1,\dots, p$, the nonnegative integer $\ord_{k}u_{f}^{(k)} - \ord_{k}v_{f}^{(k)}$ is called the {\bf $k$th effective order} of $f$; it is denoted by $\Eord_{k}f$.
\end{definition}

\begin{definition}
Let $f$ and $g$ be two $\sigma^{\ast}$-polynomials in the ring $K\{y_{1},\dots, y_{n}\}^{\ast}$. We say that {\em $f$ has lower rank than $g$} and write $\rk\,f < \rk\,g$ if either $f\in K$, $g\notin K$, or
$$(u_{f}^{(1)}, \deg_{u_{f}^{(1)}}f,  \ord_{2}u_{f}^{(2)}, \dots,\ord_{p}u_{f}^{(p)}, \Eord_{1}f,\dots, \Eord_{p}f) <_{\lex}\hspace{2in}$$
\begin{equation}
(u_{g}^{(1)}, \deg_{u_{g}^{(1)}}f,  \ord_{2}u_{g}^{(2)}, \dots,\ord_{p}u_{g}^{(p)}, \Eord_{1}g,\dots, \Eord_{p}g)\hspace{3in}
\end{equation}
(the comparison of $u_{f}^{(1)}$ and $u_{g}^{(1)}$ in this lexicographic order is made with respect to the order $<_{1}$ on the set of terms $\Gamma Y$). If the last two $(2p+1)$-tuples are equal (or $f, g\in K$) we say that $f$ and $g$ are of the same rank and write $\rk f = \rk g$.
\end{definition}
\begin{definition}
Let $f, g\in K\{y_{1},\dots, y_{n}\}^{\ast}$ and let $d = \deg_{u_{g}^{(1)}}g$. We say that $f$ is {\bf $E$-reduced} with respect to $g$ if one of the following two conditions holds.

(i)\, $f$ does not contain any $(\gamma u_{g}^{(1)})^{e}$  ($\gamma\in\Gamma$) such that $\gamma\sim u_{g}^{(1)}$ and $e\geq d$;

(ii)\, $f$ contains $(\gamma u_{g}^{(1)})^{e}$ with  some $\gamma\in\Gamma$, $\gamma\sim u_{g}^{(1)}$ and $e\geq d$, but in this case either there exists $k\in\mathbb{N}_{p}$, $k\geq 2$, such that
$\ord_{k}u_{\gamma g}^{(k)} > \ord_{k}(u_{f}^{(k)})$ or there exists $j\in \mathbb{N}_{p}$ such that $\ord_{j}v_{\gamma g}^{(j)} < \ord_{j}(v_{f}^{(j)})$. (The ``or'' here is inclusive, that is, the case when both conditions hold is included.)
\end{definition}
Thus, $f$ is not $E$-reduced with respect to $g$ if $f$ contains some $(\gamma u_{g}^{(1)})^{e}$  such that $\gamma\in\Gamma$, $\gamma\sim u_{g}^{(1)}$, $e\geq d = \deg_{u_{g}^{(1)}}g$,
$\ord_{k}u_{\gamma g}^{(k)}\leq\ord_{k}(u_{f}^{(k)})$ for $k=2,\dots, p$, and $\ord_{j}v_{\gamma g}^{(j)}\geq \ord_{j}(v_{f}^{(j)})$ for $j=1,\dots p$.

\begin{remark}
If $f, g\in K\{y_{1},\dots, y_{n}\}^{\ast}$ then $f$ is reduced with respect to $g$ in the sense of \cite[Definition 3.4.22]{KLMP} with respect to the term ordering $<_{1}$, if condition (i) of the last definition holds.  Clearly, in this case $f$ is $E$-reduced with respect to $g$.
\end{remark}

\begin{remark}
It follows from \cite[Lemma 3.3]{ZW} that for all $f\in R=K\{y_{1},\dots, y_{n}\}^{\ast}$, $j\in\{1,\dots, 2^{m}\}$ and $k\in\{1,\dots, p\}$,  there exist terms $u_{fjk}$ and $v_{fjk}$ in $f$ such that
for all elements $\gamma = \alpha_{1}^{k_{1}}\dots\alpha_{m}^{k_{m}}\in\Gamma_{j}$ with sufficiently large $(|k_{1}|,\dots, |k_{m}|)\in\mathbb{N}^{m}$ (in the sense of Definition 2.2), one has
$u_{\gamma f}^{(k)} = \gamma u_{fjk}$  and $v_{\gamma f}^{(k)} = \gamma v_{fjk}$. For example, let $\sigma = \{\alpha_{1}, \alpha_{2}, \alpha_{3}\}$ is considered with the partition
$\sigma = \sigma_{1}\cup\sigma_{2}\cup\sigma_{3}$ with $\sigma_{i} = \{\alpha_{i}\}$ ($i=1, 2, 3$), $f = \alpha_{1}^{2}\alpha_{2}^{-1}\alpha_{3}^{-3}y + \alpha_{1}^{-3}\alpha_{2}\alpha_{3}^{-4}y  +
\alpha_{1}\alpha_{2}^{-2}\alpha_{3}^{2}y + \alpha_{1}^{2}\alpha_{2}^{2}\alpha_{3}y\in K\{y\}^{\ast}$ and $\mathbb{Z}^{(3)}_{j} = \{(k_{1}, k_{2}, k_{3})\,|\,k_{1}\leq 0, k_{2}\geq 0, k_{3}\leq 0\}$. Then for any
$\gamma =  \alpha_{1}^{-r}\alpha_{2}^{s}\alpha_{3}^{-t}\in\Gamma_{j}$ ($r, s, t\geq 0$), we have $\gamma f = \alpha_{1}^{-r+2}\alpha_{2}^{s-1}\alpha_{3}^{-t-3}y + \alpha_{1}^{-r-3}\alpha_{2}^{s+1}\alpha_{3}^{-t-4}y  +
\alpha_{1}^{-r+1}\alpha_{2}^{s-2}\alpha_{3}^{-t+2}y + \alpha_{1}^{-r+2}\alpha_{2}^{s+}\alpha_{3}^{-t+1}y$, hence $u_{j1f} = u_{j3f} = \alpha_{1}^{-3}\alpha_{2}\alpha_{3}^{-4}y$, $u_{j2f} = v_{j1f} = \alpha_{1}^{2}\alpha_{2}^{2}\alpha_{3}y$, $v_{j2f} = v_{j3f} =\alpha_{1}\alpha_{2}^{-2}\alpha_{3}^{2}y$.

Therefore, if $f\in R$ and $u_{f}^{(1)} = \gamma_{1}y_{k}$ where $\gamma_{1}\in\Gamma_{j}$ ($1\leq j\leq 2^{m}$), then there exist $a_{if}, b_{kf}\in\mathbb{Z}$ ($2\leq i\leq p, 1\leq k\leq p$) such that for any
$\gamma\in\Gamma_{j}$, $\ord_{i}u_{\gamma f}^{(i)} = \ord\gamma + a_{if}$ and $\ord_{k}v_{\gamma f}^{(k)} = \ord\gamma + b_{kf}$.
\end{remark}

\begin{proposition}
If $f, g\in K\{y_{1},\dots, y_{n}\}^{\ast}$ and $\rk f < \rk g$, then $f$ is $E$-reduced with respect to $g$.
\end{proposition}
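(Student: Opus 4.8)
The plan is to argue by contraposition: assuming that $f$ is \emph{not} $E$-reduced with respect to $g$, I will show that $\rk f \geq \rk g$, i.e., that the $(2p+1)$-tuple associated with $f$ in Definition 3.3 is $\geq_{\lex}$ the corresponding tuple for $g$. This negates $\rk f < \rk g$ and hence proves the proposition. Note that the very failure of $E$-reducedness forces $f$ to contain a term and $g\notin K$ (so that $u_{g}^{(1)}$ is defined); thus both $f$ and $g$ lie outside $K$ and every quantity in Definition 3.3 is meaningful.

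By the negation of Definition 3.5, there is $\gamma\in\Gamma$ with $\gamma\sim u_{g}^{(1)}$ and an exponent $e\geq d=\deg_{u_{g}^{(1)}}g$ such that $w:=\gamma u_{g}^{(1)}$ occurs in $f$ to a power $\geq e$, while $\ord_{k}u_{\gamma g}^{(k)}\leq\ord_{k}u_{f}^{(k)}$ for $k=2,\dots,p$ and $\ord_{j}v_{\gamma g}^{(j)}\geq\ord_{j}v_{f}^{(j)}$ for $j=1,\dots,p$. The key preliminary observation is that $u_{g}^{(1)}\leq_{1}w$, with equality precisely when $\gamma=1$. Indeed, writing $u_{g}^{(1)}=\delta y_{i}$ with $\delta\sim\gamma$, the similarity makes the exponents add in absolute value, so $\ord_{k}w=\ord_{k}\gamma+\ord_{k}\delta\geq\ord_{k}\delta$ for every $k$; since the order $<_{1}$ first compares the coordinates $\ord_{1},\ord_{2},\dots,\ord_{p}$, and since $\gamma\neq 1$ forces $\ord_{k}\gamma>0$ for some $k$, the earliest coordinate at which $w$ and $u_{g}^{(1)}$ differ is strictly larger for $w$. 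Because $w$ occurs in $f$ we also have $w\leq_{1}u_{f}^{(1)}$, and therefore $u_{g}^{(1)}\leq_{1}w\leq_{1}u_{f}^{(1)}$.

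I then compare the two rank tuples coordinate by coordinate. If $u_{g}^{(1)}<_{1}u_{f}^{(1)}$, the first coordinate already gives $\rk f>\rk g$. Otherwise $u_{f}^{(1)}=u_{g}^{(1)}$, and the sandwich above forces $w=u_{g}^{(1)}$, hence $\gamma=1$; consequently $u_{\gamma g}^{(k)}=u_{g}^{(k)}$, $v_{\gamma g}^{(j)}=v_{g}^{(j)}$, and the $E$-reduction inequalities become $\ord_{k}u_{g}^{(k)}\leq\ord_{k}u_{f}^{(k)}$ ($k\geq 2$) and $\ord_{j}v_{g}^{(j)}\geq\ord_{j}v_{f}^{(j)}$ (all $j$). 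Since $(u_{g}^{(1)})^{e}$ with $e\geq d$ occurs in $f$, the second coordinate obeys $\deg_{u_{f}^{(1)}}f\geq e\geq d$; if this is strict we are done, and if it is an equality the coordinates $\ord_{k}u_{f}^{(k)}\geq\ord_{k}u_{g}^{(k)}$ ($k=2,\dots,p$) handle the next comparisons. Finally, if all of these are equalities too, then using $\Eord_{k}=\ord_{k}u^{(k)}-\ord_{k}v^{(k)}$ together with $\ord_{j}v_{f}^{(j)}\leq\ord_{j}v_{g}^{(j)}$ yields $\Eord_{k}f\geq\Eord_{k}g$ for every $k$. In every case the $f$-tuple is $\geq_{\lex}$ the $g$-tuple, so $\rk f\geq\rk g$, contradicting $\rk f<\rk g$.

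The only genuinely delicate step is the sharp inequality $u_{g}^{(1)}\leq_{1}\gamma u_{g}^{(1)}$ with equality iff $\gamma=1$: this is where the similarity hypothesis $\gamma\sim u_{g}^{(1)}$ is indispensable, since it is precisely what forces the $\sigma_{k}$-orders to add rather than partially cancel, allowing the leading coordinates $\ord_{1},\dots,\ord_{p}$ of $<_{1}$ to detect any nontrivial $\gamma$. Once $\gamma=1$ is extracted in the leader-equality case, the remainder is a routine transcription of the ($\gamma$-free) $E$-reduction inequalities into the successive coordinates of the rank tuple, so I expect no further obstacles.
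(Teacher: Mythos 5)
Your proof is correct and takes essentially the same route as the paper's: the same key observation that $u_{g}^{(1)} <_{1} \gamma u_{g}^{(1)} \leq_{1} u_{f}^{(1)}$ for any nontrivial $\gamma\sim u_{g}^{(1)}$, followed by the same coordinate-wise comparison of the rank tuples showing each entry for $f$ dominates the corresponding entry for $g$. The only difference is presentational: you argue by contraposition with an explicit case split on whether $u_{f}^{(1)}=u_{g}^{(1)}$, while the paper argues by contradiction under the standing assumption $\rk f < \rk g$, which lets it extract $\gamma = 1$ immediately.
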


\begin{proof} Suppose that $f$ is not $E$-reduced with respect to $g$. If $f$ contains some $(\gamma u_{g}^{(1)})^{e}$  such that $\gamma\in\Gamma$, $\gamma\sim u_{g}^{(1)}$, and $e\geq d = \deg_{u_{g}^{(1)}}g$, then $\gamma = 1$ (if $\gamma\neq 1$, then $u_{g}^{(1)} <_{1} \gamma u_{g}^{(1)}\leq_{1}u_{f}^{(1)}$ that contradicts the condition (7) for $\rk f < \rk g$). Now the fact that $f$ is not $E$-reduced with respect to $g$ implies that
$\ord_{k}u_{g}^{(k)}\leq\ord_{k}u_{f}^{(k)}$ for $k=2,\dots, p$ and $\ord_{k}v_{g}^{(k)}\geq\ord_{k}v_{f}^{(k)}$ for $k=1,\dots, p$. It follows that $\Eord_{k}g\leq \Eord_{k}f$ ($1\leq k\leq p$), so we have arrived at a contradiction with the inequality $\rk f < \rk g$. Therefore, $f$ is $E$-reduced with respect to $g$.
\end{proof}

\begin{proposition}
Let $\mathcal{A} =\{g_{1},\dots, g_{t}\}$ be a finite set of $\sigma^{\ast}$-polynomials in the ring $R=K\{y_{1},\dots, y_{n}\}^{\ast}$, let $u^{(i)}_{k}$ and $v^{(i)}_{k}$ denote the $i$-leader and $i$-coleader of $g_{k}$, respectively ($1\leq k\leq t, 1\leq i\leq p$). Let $d_{k} = \deg_{u^{(1)}_{k}}g_{k}$ and $I_{k}$ denote the coefficient of $(u^{(1)}_{k})^{d_{k}}$ when $g_{k}$ is written as a polynomial in $u^{(1)}_{k}$ ($1\leq k\leq t$). Furthermore, let
$I({\mathcal{A}}) = \{f\in R\,|\,$ either $f =1$ or $f$ is a product of finitely many $\sigma^{\ast}$-polynomials of the form  $\gamma(I_{k})$ ($\gamma\in\Gamma, k=1,\dots, t)\}$. Then for any $h\in R$,
there exist $J\in I(\mathcal{A})$ and  $\overline{h}\in R$ such that $\overline{h}$ is $E$-reduced with respect to $\mathcal{A}$ and $Jh\equiv \overline{h} (mod\, [\mathcal{A}]^{\ast})$ (that is, $Jh-\overline{h}\in [\mathcal{A}]^{\ast}$).
\end{proposition}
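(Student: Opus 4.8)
The plan is to obtain $\overline h$ and $J$ by an iterated $E$-reduction and to prove termination by a well-ordering argument. Initialize $\tilde h:=h$ and $J:=1$. At each stage, if $\tilde h$ is $E$-reduced with respect to every $g_k$ we output $\overline h:=\tilde h$ together with the current $J$; otherwise $\tilde h$ is not $E$-reduced with respect to some $g_k$, so it contains a power $(\gamma u^{(1)}_k)^{e}$ with $\gamma\sim u^{(1)}_k$, $e\ge d_k$, for which $\ord_i u^{(i)}_{\gamma g_k}\le\ord_i u^{(i)}_{\tilde h}$ ($2\le i\le p$) and $\ord_j v^{(j)}_{\gamma g_k}\ge\ord_j v^{(j)}_{\tilde h}$ ($1\le j\le p$). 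Put $w:=\gamma u^{(1)}_k$ and $D:=\deg_w\tilde h\ (\ge e\ge d_k)$. Since $\gamma$ acts as a bijection of $\Gamma Y$ carrying the variable $u^{(1)}_k$ to $w$, the polynomial $\gamma g_k$, viewed as a polynomial in the single variable $w$, has degree exactly $d_k$ and leading coefficient $\gamma(I_k)$; hence the pseudo-reduction
\[
\tilde h\ \longmapsto\ \tilde h':=\gamma(I_k)\,\tilde h-c\,w^{\,D-d_k}\,(\gamma g_k),
\]
where $c$ is the coefficient of $w^{D}$ in $\tilde h$, cancels the top power of $w$, so $\deg_w\tilde h'<D$ and $\tilde h'\equiv\gamma(I_k)\tilde h\pmod{[\mathcal A]^{\ast}}$. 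We then replace $\tilde h$ by $\tilde h'$ and $J$ by $\gamma(I_k)J$ and iterate.

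The output has the required shape automatically: after any number of steps $J$ is a finite product of transforms $\gamma(I_k)$, so $J\in I(\mathcal A)$, and the congruence $Jh\equiv\tilde h\pmod{[\mathcal A]^{\ast}}$ persists, so at halting $Jh\equiv\overline h$ with $\overline h$ $E$-reduced with respect to $\mathcal A$. The substance is termination, for which I would use that $\rk$ takes values in the lexicographic product $\Gamma Y\times\mathbb N\times\mathbb N^{p-1}\times\mathbb N^{p}$ and that $<_1$ is a \emph{well-ordering} of $\Gamma Y$ (each $<_i$ sorts first by the non-negative integers $\ord_1,\dots,\ord_p$ and only afterwards by finitely many bounded coordinates), so this product is well-ordered; it then suffices to show that a suitable measure derived from $\rk$ strictly decreases at each step. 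A first, encouraging observation is that no step raises $\ord_1$ of the $1$-leader: since $w=\gamma u^{(1)}_k$ occurs in $\tilde h$ we have $\ord_1 w\le\ord_1 u^{(1)}_{\tilde h}$, and for every term $s$ of $g_k$ the triangle inequality gives $\ord_1(\gamma s)\le\ord_1\gamma+\ord_1 s\le\ord_1\gamma+\ord_1 u^{(1)}_k=\ord_1 w$, so every term of $\gamma g_k$, and hence of the subtrahend, has $\ord_1\le\ord_1 u^{(1)}_{\tilde h}$.

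The main obstacle is promoting this into a genuine strict decrease of $\rk$, and it is here that the inversive features force real work. Multiplication by $\gamma(I_k)$ must not raise the leading data: $\gamma(I_k)$ is free of $w$ (as $I_k$ is free of $u^{(1)}_k$), so it cannot change $\deg_w$, but one must still rule out that it introduce a variable $>_1 w$; because multiplication by $\gamma$ need not respect $<_1$ across different orthants, this requires the remark on the stabilization of leaders and coleaders of transforms, which gives $u^{(1)}_{\gamma(I_k)}=\gamma\,u'$ for a fixed term $u'$ (independent of $\gamma$) for all but finitely many $\gamma$ in the orthant of $u^{(1)}_k$, the finitely many exceptions being checked directly. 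Symmetrically, the subtrahend $c\,w^{D-d_k}(\gamma g_k)$ contributes only products of a term of $c\,w^{D-d_k}$ with a term of $\gamma g_k$, and the witness inequalities $\ord_i u^{(i)}_{\gamma g_k}\le\ord_i u^{(i)}_{\tilde h}$ and $\ord_j v^{(j)}_{\gamma g_k}\ge\ord_j v^{(j)}_{\tilde h}$ are exactly what keep these terms from raising any $\ord_i u^{(i)}$ ($i\ge2$) or lowering any $\ord_j v^{(j)}$, hence from raising any $\Eord_k$. With these controls in place, reducing the $<_1$-largest reducible term lowers $\bigl(u^{(1)}_{\tilde h},\deg_{u^{(1)}_{\tilde h}}\tilde h\bigr)$ when the $1$-leader is itself reducible, and otherwise keeps those first coordinates fixed while holding the remaining ones from drifting upward; the hardest case is when the $1$-leader is not reducible and only strictly smaller terms are, which I expect to handle by refining $\rk$ with the $<_1$-largest reducible term and its degree. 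Since no new violation is created above the one removed, the refined measure strictly decreases, and well-foundedness forces the algorithm to halt, completing the proof.
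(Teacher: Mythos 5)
Your construction coincides with the paper's own proof: the pseudo-reduction step $\tilde h\mapsto\gamma(I_k)\tilde h-c\,w^{D-d_k}(\gamma g_k)$, the accumulation of $J\in I(\mathcal{A})$ and of the congruence modulo $[\mathcal{A}]^{\ast}$, and the termination measure are all the same --- your ``$<_1$-largest reducible term together with its degree'' is exactly what the paper calls the $\mathcal{A}$-leader $w_h$ of $h$ and $\deg_{w_h}h$, and the paper likewise terminates by induction along the well-ordered set $(\Gamma Y,<_1)$. Your control of the subtrahend is also the paper's: the witness inequalities $\ord_iu^{(i)}_{\gamma g_k}\le\ord_iu^{(i)}_{\tilde h}$ ($2\le i\le p$) and $\ord_jv^{(j)}_{\gamma g_k}\ge\ord_jv^{(j)}_{\tilde h}$ guarantee that the $i$-leaders of $\tilde h'$ do not grow and its coleaders do not shrink, so a term above $w$ that was not reducible in $\tilde h$ cannot become reducible in $\tilde h'$.

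The one step that fails as written is the one you yourself flag as requiring ``real work'': ruling out that $\gamma(I_k)$ (or $\gamma g_k$) introduces a term above $w$. Remark 3.6 cannot do this job, and the phrase ``the finitely many exceptions being checked directly'' is not available to you: the stabilization of Remark 3.6 holds only for $\gamma$ in a fixed orthant \emph{all} of whose exponents are sufficiently large in absolute value, and the complementary set of $\gamma$'s is infinite (one exponent may stay small while the others grow arbitrarily), so there is no finite list of exceptions; moreover the remark describes the leaders of $\gamma f$ asymptotically in $\gamma$, not the monotonicity statement you need for the single $\gamma$ at hand. What is actually needed --- and what the paper invokes in the parenthetical ``since $u^{(1)}_{\gamma g_k}=\gamma u^{(1)}_{g_k}=w_h$'' --- is the elementary lemma: \emph{if $\gamma\sim u$ and $s<_1u$, then $\gamma s<_1\gamma u$.} This has a short direct proof from the definition of $<_1$. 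Since $\gamma\sim u$, there is no cancellation in $\gamma u$, so $\ord_i(\gamma u)=\ord_i\gamma+\ord_iu$ for every $i$, while always $\ord_i(\gamma s)\le\ord_i\gamma+\ord_is$. Hence, comparing order vectors coordinate by coordinate, the first index at which the order vectors of $\gamma s$ and $\gamma u$ differ (if any) must give $\ord_i(\gamma s)<\ord_i(\gamma u)$; and if the order vectors agree, then no cancellation occurred in $\gamma s$ either, in which case every remaining coordinate of the defining $(2m+p)$-tuple (each $|k_j|$ becomes $|\gamma_j|+|\cdot|$ and each $k_j$ becomes $\gamma_j+\cdot$) is shifted identically for $s$ and for $u$, so the comparison is inherited from $s<_1u$. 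Applying the lemma with $u=u^{(1)}_{g_k}$ shows that every term of $\gamma g_k$ is $\le_1\gamma u^{(1)}_{g_k}=w$, and every term of $\gamma(I_k)$ --- these are among the terms of $\gamma g_k$ distinct from $w$ --- is $<_1w$. With this substituted for your appeal to Remark 3.6, no new reducible term above $w$ can appear, your refined measure strictly decreases, and the argument closes exactly as in the paper.
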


\begin{proof}
If $h$ is $E$-reduced with respect to $\mathcal{A}$, the statement is obvious (one can set $\overline{h}=h$). Suppose that $h$ is not $E$-reduced with respect to $\mathcal{A}$. In what follows, if a $\sigma$-polynomial $f\in R$ is not $E$-reduced with respect to $\mathcal{A}$, then  a term $w_f$ that appears in $f$ will be called the {\em $\mathcal{A}$-leader} of $f$ if $w_f$ is the greatest (with respect to $<_{1}$) term among all terms of the form
$\gamma u^{(1)}_{g_k}$ with $\gamma\in\Gamma, \gamma\sim u^{(1)}_{g_k}$, ($1\leq k\leq t$) such that $f$ contains $(\gamma u^{(1)}_{k})^{e}$ with $e\geq d_{k}$, $\ord_{i}u_{\gamma g_{k}}^{(i)}\leq \ord_{i}u^{(i)}_{f}$ for $i=2,\dots, p$, and $\ord_{j}v_{\gamma g_{k}}^{(i)}\geq \ord_{j}v^{(j)}_{f}$ for $j=1,\dots, p$.

Let $w_h$ be the $\mathcal{A}$-leader of the element $h$, $d=\deg_{w_h}h$, and $c_h$ the coefficient of $w^{d}_h$ when $h$ is written as a polynomial in $w_h$. Then $w_{h}=\gamma u^{(1)}_{k}$ for some $k\in\{1,\dots, t\}$ and   $\gamma\in\Gamma$ such that $\gamma\sim u^{(1)}_{g_k}$, $d\geq d_{k}$, $\ord_{i}u_{\gamma g_{k}}^{(i)}\leq \ord_{i}u^{(i)}_{h}$ ($2\leq i\leq p$), and $\ord_{j}v_{\gamma g_{k}}^{(j)}\geq \ord_{j}v^{(j)}_{h}$ ($1\leq j\leq p$).

Let us choose such $k$ that corresponds to the maximum (with respect to $<_{1}$) $1$-leader $u^{(1)}_{i}$ ($1\leq i\leq t$) and consider the $\sigma^{\ast}$-polynomial $h' = \gamma(I_{k})h - c_{h}w_{h}^{d-d_{k}}(\gamma g_{k})$. Clearly, $\deg_{w_{h}}h' < \deg_{w_{h}}h$ and $h'$ does not contain any $\mathcal{A}$-leader $\gamma' u^{(1)}_{\nu}$ ($\gamma'\in\Gamma, 1\leq\nu\leq t$) that is greater than $w_{h}$ with respect to $<_{1}$  (such a term cannot appear in $\gamma(I_{k})h$ or $\gamma g_{k}$, since $u_{\gamma g_{k}}^{(1)} = \gamma u^{(1)}_{g_k} = w_{h}$). Applying the same procedure to $h'$ and continuing in the same way, we will arrive at a $\sigma$-polynomial $\overline{h}\in R$ such that $\overline{h}$ is $E$-reduced with respect to $\mathcal{A}$ and $Jh-\overline{h}\in [\mathcal{A}]^{\ast}$ for some $J\in I({\mathcal{A}})$.
\end{proof}

The process of reduction described in the proof of the last proposition can be realized by the following algorithm. (Recall that $\mathcal{E}_{R}$ denotes the ring of $\sigma^{\ast}$-operators over the $\sigma^{\ast}$-ring
$R = K\{y_{1},\dots, y_{n}\}^{\ast}$.)

\begin{alg}  ($h, t, g_{1},\dots, g_{t};\, \overline{h}$)

{\bf Input:} $h\in R$, a positive integer $t$, $\mathcal{A} = \{g_{1},\dots, g_{t}\}\subseteq R$ where $g_{i}\ne 0$  for $i = 1,\dots, t$

{\bf Output:}  Element $\overline{h}\in R$, elements $C_{1},\dots, C_{t}\in\mathcal{E}_{R}$ and $J\in I({\mathcal{A}})$ such that
$Jh = \sum_{i=1}^{t}C_{i}(g_{i}) + \overline{h}$  and $\overline{h}$ is $E$-reduced with respect to $\mathcal{A}$

{\bf Begin}

$C_{1}:= 0, \dots, C_{t}:= 0, \overline{h}:= h$

{\bf While} there exist $k$, $1\leq k\leq t$, and a term $w$ that appears in $\overline{h}$  with a (nonzero) coefficient $c_{w}$, such that
$u^{(1)}_{g_k}\,|\,w$,  $\deg_{u^{(1)}_{g_k}}g_{k}\leq\deg_{w}\overline{h}$, $\ord_{i}(\gamma_{kw}u^{(i)}_{g_k})\leq\ord_{i}u^{(i)}_{\overline{h}}$ for $i=2,\dots, p$, where $\gamma_{kw} = {\frac{w}{u^{(1)}_{g_{k}}}}$, and                $\ord_{j}(\gamma_{kw}v^{(j)}_{g_k})\geq \ord_{j}v^{(j)}_{\overline{h}}$  for $j=1, \dots, p$, {\bf do}

$z$:= the greatest of the terms $w$ that satisfy the above conditions.

$l$:= the smallest number $k$  for which $u^{(1)}_{g_k}$ is the greatest (with resect to $<_{1}$) $1$-leader of an element of $\mathcal{A}$  such that
$u^{(1)}_{g_k}\,|\,z$,  $\deg_{u^{(1)}_{g_k}}g_{k}\leq\deg_{z}\overline{h}$,  $\ord_{i}(\gamma_{kz}u^{(i)}_{g_k})\leq\ord_{i}u^{(i)}_{\overline{h}}$ for $i=2,\dots, p$, where $\gamma_{kz} = {\frac{z}{u^{(1)}_{g_{k}}}}$, and $\ord_{j}(\gamma_{kz}v^{(j)}_{g_k})\geq\ord_{j}v^{(j)}_{\overline{h}}$  for $j=1, \dots, p$,

$J:= \gamma(I_{l})J, C_{l}:= C_{l} + c_{z}z^{d-d_{l}}\gamma_{lz}$ where $d=\deg_{z}\overline{h}$, $d_{l}=\deg_{u^{(1)}_{g_l}}g_{l}$, and $c_{z}$ is the coefficient of $z^{d}$ when $\overline{h}$ is written as a polynomial in $z$

$\overline{h}:= \tau(I_{l})h^{\ast} - c_{z}z^{d-d_{l}}(\gamma g_{l})$

{\bf End}

\end{alg}

\begin{definition}
A set $\mathcal{A}\subseteq K\{y_{1},\dots, y_{n}\}^{\ast}$ is said to be {\bf $E$-autoreduced} if either it is empty or $\mathcal{A}\bigcap K = \emptyset$ and every element of $\mathcal{A}$ is $E$-reduced with
respect to all other elements of the set $\mathcal{A}$.
\end{definition}

\begin{example}
Let $K$ be an inversive difference field with a basic set $\sigma = \{\alpha_{1}, \alpha_{2}\}$ considered with a partition $\sigma = \sigma_{1}\cup\sigma_{2}$ where $\sigma_{1} = \{\alpha_{1}\}$ and  $\sigma_{2} = \{\alpha_{2}\}$. Let $\mathcal{A} = \{g, h\}\subseteq K\{y\}^{\ast}$ (the ring of $\sigma^{\ast}$-polynomials in one $\sigma^{\ast}$-indeterminate $y$) where
$$g = \alpha_{1}^{3}\alpha_{2}^{-2}y + \alpha_{2}^{3}y + \alpha_{2}y,\hspace{0.3in} h = \alpha_{1}^{2}\alpha_{2}^{-1}y + \alpha_{1}^{-1}\alpha_{2}^{2}y + \alpha_{1}\alpha_{2}y.$$
Then $u_{g}^{(1)} = \alpha_{1}^{3}\alpha_{2}^{-2}y$, $v_{g}^{(1)} = u_{g}^{(2)} = \alpha_{2}^{3}y$, $v_{g}^{(2)} = \alpha_{2}y$, $u_{h}^{(1)} = \alpha_{1}^{2}\alpha_{2}^{-1}y$, $v_{h}^{(1)} = v_{h}^{(2)} = \alpha_{1}\alpha_{2}y$, and $u_{h}^{(2)} = \alpha_{1}^{-1}\alpha_{2}^{2}y$. We see that $u_{g}^{(1)}$ is a transform of $u_{h}^{(1)}$, $u_{g}^{(1)} = \gamma u_{h}^{(1)}$ where $\gamma =  \alpha_{1}\alpha_{2}^{-1}\sim u_{h}^{(1)}$. Furthermore, $\gamma h = \alpha_{1}^{3}\alpha_{2}^{-2}y + \alpha_{1}^{2}y + \alpha_{2}y$, so $u_{\gamma h}^{(1)} = u_{\gamma h}^{(2)} = \alpha_{1}^{3}\alpha_{2}^{-2}y$, $v_{\gamma h}^{(1)} = \alpha_{2}y$,
and $v_{\gamma h}^{(2)} = \alpha_{1}^{2}y$. Thus, $\ord_{2}u_{\gamma h}^{(2)} = 2 < \ord_{2}u_{g}^{(2)} = 3$, $\ord_{1}v_{\gamma h}^{(1)} = 0  = \ord_{1}v_{g}^{(1)}$, but
$\ord_{2}v_{\gamma h}^{(2)} = 0 < \ord_{2}v_{g}^{(2)} = 1$. Therefore, $g$ is $E$-reduced with respect to $h$. Since $h$ is clearly $E$-reduced with respect to $g$, $\mathcal{A} = \{g, h\}$ is an $E$-autoreduced set.
At the same time, this set is not autoreduced in the sense of \cite{L6} where an analog of Definition 3.4 does not require the option ``there exists $j\in \mathbb{N}_{p}$ such that $\ord_{j}v_{\gamma g}^{(j)} < \ord_{j}(v_{f}^{(j)})$'' in the case when $f$ contains $(\gamma u_{g}^{(1)})^{e}$ with some $\gamma\in\Gamma$, $\gamma\sim u_{g}^{(1)}$ and $e\geq d$ (see Definition 3.4).
\end{example}

We are going to show that every $E$-autoreduced set is finite. The proof of the following lemma can be found in \cite[Chapter 0, Section 17]{Kolchin2}.

\begin{lemma}
Let $A$ be an infinite subset of the set $\mathbb{N}^{m}\times\mathbb{N}_{n}$ ($m, n\geq 1$). Then there exists an infinite sequence of elements of $A$, strictly increasing relative to the product order,
in which every element has the same projection on $\mathbb{N}_{n}$.
\end{lemma}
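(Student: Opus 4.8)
The plan is to strip off the finite label $\mathbb{N}_n$ by a pigeonhole argument and then reduce to a purely combinatorial statement about $\mathbb{N}^m$ that I would prove by induction on $m$ via iterated extraction of monotone subsequences. First I would write $A$ as the union of its $n$ fibers $A_j = \{a\in A \mid a \text{ projects to } j\in\mathbb{N}_n\}$. Since $A$ is infinite and $n$ is finite, some fiber $A_{j_0}$ is infinite, and all of its elements share the projection $j_0$. Identifying $A_{j_0}$ with an infinite subset $B\subseteq\mathbb{N}^m$, it then suffices to find an infinite $\leq_P$-strictly increasing sequence in $B$; re-attaching the common label $j_0$ gives the sequence required in $A$.

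The core is the auxiliary claim that every infinite subset $B\subseteq\mathbb{N}^m$ contains an infinite sequence that is strictly increasing for the product order. I would prove the slightly more flexible statement that \emph{any} infinite sequence in $\mathbb{N}^m$ has an infinite subsequence that is weakly increasing in each of its $m$ coordinates simultaneously, arguing by induction on $m$. For $m=1$, an infinite sequence of nonnegative integers is either bounded, in which case some value recurs infinitely often and yields a constant subsequence, or unbounded, in which case it yields a strictly increasing one; either way we obtain a weakly increasing subsequence. For the inductive step I would first pass to a subsequence along which the first coordinate is weakly increasing (applying the $m=1$ case to the first coordinates), then apply the induction hypothesis to the last $m-1$ coordinates of that subsequence to extract a further subsequence along which coordinates $2,\dots,m$ are weakly increasing. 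Along this doubly extracted subsequence all $m$ coordinates are weakly increasing, that is, it is weakly $\leq_P$-increasing.

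To finish, I would enumerate $B$ as a sequence of \emph{pairwise distinct} points and apply the auxiliary claim to it. In the resulting weakly increasing subsequence any two consecutive terms satisfy $a\leq_P b$ with $a\neq b$, hence $a<_P b$, so the subsequence is strictly increasing; this proves the claim and thus the lemma. The step I expect to be most delicate is the nested extraction in the inductive step: I must ensure that refining the subsequence to control coordinates $2,\dots,m$ does not spoil the monotonicity already secured in the first coordinate. This is safe precisely because monotonicity of a sequence is inherited by all of its subsequences, but it is the point that must be stated carefully. (Equivalently, the auxiliary claim is the well-quasi-order property of $(\mathbb{N}^m,\leq_P)$, i.e.\ Dickson's lemma, which one could instead invoke directly.)
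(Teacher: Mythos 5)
Your proof is correct. The paper does not prove this lemma itself --- it simply cites Kolchin (Chapter 0, Section 17 of \emph{Differential Algebra and Algebraic Groups}) --- and your argument (pigeonhole on the finite factor $\mathbb{N}_{n}$ to fix the label, induction on $m$ with iterated extraction of coordinatewise weakly increasing subsequences, then upgrading weak to strict increase using pairwise distinctness of the enumerated points) is precisely the classical proof of this well-quasi-order property of $(\mathbb{N}^{m},\leq_{P})$, i.e., Dickson's lemma, found in that source; the one delicate point you flag, that monotonicity is inherited by subsequences, is handled correctly.
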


Since every infinite sequence of elements of $\Gamma$ contains an infinite subsequence whose elements are similar to each other (there are only finitely many orthants of $\mathbb{Z}^{m}$), the last lemma immediately implies the following statement that will be used below.

\begin{lemma}
Let $S$ be any infinite set of terms $\gamma y_j$ ($\gamma\in\Gamma, 1\leq j\leq n$) in the ring $K\{y_{1},\dots, y_{n}\}^{\ast}$.   Then there exists an index $j$ ($1\leq j\leq n$) and an infinite sequence of terms
$\gamma_{1}y_{j}, \gamma_{2}y_{j}, \dots, \gamma_{k}y_{j},\dots $ in $S$ such that $\gamma_{k}\,|\,\gamma_{k+1}$ for every $k=1, 2, \dots $.
\end{lemma}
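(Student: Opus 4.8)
The plan is to deduce Lemma 3.14 directly from Lemma 3.13 by encoding each term $\gamma y_j \in S$ as a point of $\mathbb{N}^m \times \mathbb{N}_n$. First I would note that the obstruction to applying Lemma 3.13 verbatim is that the exponents $k_1,\dots,k_m$ in $\gamma = \alpha_1^{k_1}\cdots\alpha_m^{k_m}$ are integers, not nonnegative integers, so the product order on $\Gamma$ (equivalently, the divisibility relation ${\gamma_k \mid \gamma_{k+1}}$) does not immediately correspond to the product order on $\mathbb{N}^m$. The key device to get around this is the remark, made just before Lemma 3.13 in the text, that any infinite sequence of elements of $\Gamma$ contains an infinite subsequence all of whose elements are similar to one another, since there are only $2^m$ orthants of $\mathbb{Z}^m$ and hence the orthant map $\Gamma \to \{1,\dots,2^m\}$ has an infinite fiber on any infinite set.

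So the first step is to pass to an infinite subset $S' \subseteq S$ in which all the group elements $\gamma$ lie in a single orthant $\mathbb{Z}_h^{(m)}$; equivalently, the sign pattern $(\operatorname{sgn} k_1,\dots,\operatorname{sgn} k_m)$ is fixed across $S'$. Within this fixed orthant, the map $\gamma = \alpha_1^{k_1}\cdots\alpha_m^{k_m} \mapsto (|k_1|,\dots,|k_m|) \in \mathbb{N}^m$ is injective, and crucially it is order-preserving in the right sense: recalling the definition of divisibility in $\Gamma$ (Section 3), one has $\gamma \mid \gamma'$ precisely when $\gamma \sim \gamma'$ and $\gamma' = \tilde\gamma\gamma$ for some $\tilde\gamma \sim \gamma$, which for two elements of the same orthant is exactly the condition $(|k_1|,\dots,|k_m|) \leq_P (|k_1'|,\dots,|k_m'|)$. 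Thus divisibility of similar elements of $\Gamma$ corresponds to the product order on $\mathbb{N}^m$ under the absolute-value map.

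Next I would form the associated subset $A = \{\,((|k_1|,\dots,|k_m|),\, j) \mid \gamma y_j \in S',\ \gamma = \alpha_1^{k_1}\cdots\alpha_m^{k_m}\,\} \subseteq \mathbb{N}^m \times \mathbb{N}_n$. Since $S'$ is infinite and the term-to-point map is injective on $S'$ (the absolute-value map is injective on a fixed orthant, and we also record the index $j$), the set $A$ is infinite. Applying Lemma 3.13 to $A$ yields an infinite sequence of elements of $A$, strictly increasing relative to $\leq_P$, all sharing the same projection $j$ onto $\mathbb{N}_n$. Translating back through the correspondence, this produces an infinite sequence of terms $\gamma_1 y_j, \gamma_2 y_j, \dots$ in $S'$ (hence in $S$) with the same index $j$, all similar to one another, and with $(|k_1^{(\ell)}|,\dots,|k_m^{(\ell)}|)$ strictly increasing in $\leq_P$, which by the previous paragraph gives $\gamma_\ell \mid \gamma_{\ell+1}$ for every $\ell$.

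The main obstacle, and the point that needs the most care, is verifying that divisibility in the group $\Gamma$ really matches the product order on absolute values within a single orthant — in particular checking that the element $\tilde\gamma$ witnessing $\gamma' = \tilde\gamma\gamma$ is automatically similar to $\gamma$ when both $\gamma$ and $\gamma'$ lie in the same orthant and $(|k_i|) \leq_P (|k_i'|)$, so that the definition of $\gamma \mid \gamma'$ is genuinely met (this is where the sign pattern being constant is essential: it guarantees that the "difference" $\tilde\gamma$ has the same sign pattern, rather than involving cancellation). Once that compatibility is established, the reduction to Lemma 3.13 is routine, and strict increase in $\leq_P$ forces each successive term to be a proper multiple of the previous one, giving the desired chain.
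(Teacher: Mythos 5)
Your proposal is correct and follows exactly the paper's argument: the paper proves this lemma by observing that any infinite set of terms contains an infinite subset whose group elements lie in a single orthant (pigeonhole over the $2^{m}$ orthants), after which the preceding Kolchin-type lemma on $\mathbb{N}^{m}\times\mathbb{N}_{n}$ applies via the absolute-value encoding, with divisibility in $\Gamma$ corresponding to the product order within a fixed orthant. Your write-up simply makes explicit the compatibility check (that the witness $\tilde\gamma$ stays in the same orthant) which the paper treats as immediate.
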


\begin{proposition}
Every $E$-autoreduced set is finite.
\end{proposition}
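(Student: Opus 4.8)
The plan is to argue by contradiction: suppose $\mathcal{A}$ is an infinite $E$-autoreduced set. Since $\mathcal{A}\cap K=\emptyset$, every element has a well-defined $1$-leader. The set of $1$-leaders of elements of $\mathcal{A}$ is an infinite set of terms $\gamma y_j$ in $\Gamma Y$, so by Lemma 3.13 there is an index $j$ and an infinite sequence $g_1,g_2,\dots$ of elements of $\mathcal{A}$ whose $1$-leaders $u^{(1)}_{g_1},u^{(1)}_{g_2},\dots$ all involve $y_j$ and satisfy the divisibility chain $u^{(1)}_{g_k}\,|\,u^{(1)}_{g_{k+1}}$ for every $k$. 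In particular all these $1$-leaders are similar to each other and lie in a single orthant $\Gamma_{j'}Y$. I would write $u^{(1)}_{g_{k+1}}=\gamma_k u^{(1)}_{g_k}$ with $\gamma_k\sim u^{(1)}_{g_k}$ for suitable $\gamma_k\in\Gamma$.

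The strategy is to extract from this infinite sequence a subsequence along which all the remaining coordinates of the rank tuple (7) become monotone in the ``wrong'' direction, forcing two elements of $\mathcal{A}$ to be $E$-reducible against each other and contradicting autoreducedness. Concretely, for each fixed $k\in\{1,\dots,p\}$ the orders $\ord_k u^{(k)}_{g_i}$ and $\ord_k v^{(k)}_{g_i}$ take values in $\mathbb{N}$; I would repeatedly pass to infinite subsequences (a finite number of times, once for each of the finitely many data $\deg_{u^{(1)}}$, the $\ord_i u^{(i)}$ for $i\geq 2$, and the $\ord_j v^{(j)}$ for all $j$) so that along the final subsequence the degree $\deg_{u^{(1)}_{g_i}}g_i$ is nondecreasing, each $\ord_i u^{(i)}_{g_i}$ is nondecreasing, and each $\ord_j v^{(j)}_{g_i}$ is nonincreasing. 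Using that any infinite sequence of natural numbers has a nondecreasing infinite subsequence, this refinement terminates after finitely many steps and yields two indices $i<i'$ in the surviving subsequence with $u^{(1)}_{g_i}\,|\,u^{(1)}_{g_{i'}}$ and $\deg_{u^{(1)}_{g_i}}g_i\leq\deg_{u^{(1)}_{g_{i'}}}g_{i'}$.

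It then remains to check that these inequalities are exactly what is needed to violate $E$-reducedness. Setting $\gamma=u^{(1)}_{g_{i'}}/u^{(1)}_{g_i}\in\Gamma$, so that $\gamma\sim u^{(1)}_{g_i}$ and $\gamma u^{(1)}_{g_i}=u^{(1)}_{g_{i'}}$ appears in $g_{i'}$ to a power $\geq d_i=\deg_{u^{(1)}_{g_i}}g_i$, I would invoke Remark 3.7: since $u^{(1)}_{g_i}$ lies in a fixed orthant $\Gamma_{j'}$, there are integers $a_{i,g_i}$ and $b_{k,g_i}$ with $\ord_i u^{(i)}_{\gamma g_i}=\ord\gamma+a_{i,g_i}$ and $\ord_k v^{(k)}_{\gamma g_i}=\ord\gamma+b_{k,g_i}$ for all $\gamma\in\Gamma_{j'}$ with sufficiently large exponents. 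Comparing $\ord_i u^{(i)}_{\gamma g_i}$ with $\ord_i u^{(i)}_{g_{i'}}$ and $\ord_j v^{(j)}_{\gamma g_i}$ with $\ord_j v^{(j)}_{g_{i'}}$, the monotonicity arranged in the previous paragraph gives exactly $\ord_i u^{(i)}_{\gamma g_i}\leq\ord_i u^{(i)}_{g_{i'}}$ for $i=2,\dots,p$ and $\ord_j v^{(j)}_{\gamma g_i}\geq\ord_j v^{(j)}_{g_{i'}}$ for $j=1,\dots,p$, i.e. precisely the condition (spelled out after Definition 3.4) under which $g_{i'}$ fails to be $E$-reduced with respect to $g_i$. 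This contradicts the assumption that $\mathcal{A}$ is $E$-autoreduced, completing the proof.

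The main obstacle I anticipate is the bookkeeping in the repeated subsequence extraction and, more delicately, ensuring that Remark 3.7 applies---that is, that the chosen indices can be taken far enough out in the sequence that the exponents of $\gamma$ are ``sufficiently large'' in the sense required for $u^{(i)}_{\gamma g_i}=\gamma u_{g_i,j',i}$ and $v^{(j)}_{\gamma g_i}=\gamma v_{g_i,j',j}$ to hold exactly. Since the divisibility chain forces $\ord\gamma\to\infty$ along the sequence, this largeness is automatic once we go far enough out, so the subsequence argument and the stabilization of the linear expressions from Remark 3.7 can be interleaved; care is needed to make that quantifier order rigorous.
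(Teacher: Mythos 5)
Your argument follows the same skeleton as the paper's proof (contradiction, the divisibility-chain lemma, subsequence refinement, then exhibiting a pair that violates $E$-reducedness), but the refinement is applied to the wrong quantities, and this leaves a genuine gap at the decisive step. First, the extraction itself is problematic: an infinite sequence of natural numbers always has a nondecreasing infinite subsequence, but it need not have a nonincreasing one (e.g. $1,2,3,\dots$), so your step ``pass to a subsequence along which each $\ord_j v^{(j)}_{g_i}$ is nonincreasing'' can be impossible --- the coleader orders may tend to infinity along the chain, and the fact you invoke does not justify it. Second, and more fundamentally, even where your monotonicity can be arranged it does not yield the inequalities that contradict Definition 3.4. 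With $\gamma = u^{(1)}_{g_{i'}}/u^{(1)}_{g_i}$, one must check $\ord_k u^{(k)}_{\gamma g_i}\leq \ord_k u^{(k)}_{g_{i'}}$; since every term of $\gamma g_i$ is $\gamma w$ for a term $w$ of $g_i$, the left-hand side is bounded by $\ord_k\gamma + \ord_k u^{(k)}_{g_i}$, where $\ord_k\gamma = \ord_k u^{(1)}_{g_{i'}} - \ord_k u^{(1)}_{g_i}$ grows along the divisibility chain. Knowing only $\ord_k u^{(k)}_{g_i}\leq \ord_k u^{(k)}_{g_{i'}}$ is not enough, because the growth of $\ord_k\gamma$ can outpace that of $\ord_k u^{(k)}_{g_{i'}}$: what is actually needed is monotonicity of the \emph{differences} $\ord_k u^{(k)}_{g_i} - \ord_k u^{(1)}_{g_i} \leq \ord_k u^{(k)}_{g_{i'}} - \ord_k u^{(1)}_{g_{i'}}$ (and, on the coleader side, of $\ord_j u^{(1)}_{g_i} - \ord_j v^{(j)}_{g_i}$). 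For a specific pair $i<i'$ in your final subsequence these difference inequalities can simply fail while all of your arranged inequalities hold, so the claimed violation of $E$-reducedness does not follow.

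This is exactly where the paper's proof does something different: setting $k_{ij}=\ord_j u^{(1)}_{f_i}$, $l_{ij}=\ord_j u^{(j)}_{f_i}$, $n_{ij}=\ord_j v^{(j)}_{f_i}$, it observes $l_{ij}\geq k_{ij}\geq n_{ij}$, so the vectors $(l_{i2}-k_{i2},\dots,l_{ip}-k_{ip})$ and $(k_{i1}-n_{i1},\dots,k_{ip}-n_{ip})$ lie in $\mathbb{N}^{p}$, and then applies Lemma 3.10 to get one subsequence along which \emph{both difference vectors} are nondecreasing in the product order. The telescoping estimate $\ord_j u^{(j)}_{\gamma f_{i_1}}\leq \ord_j\gamma + l_{i_1 j} = (k_{i_2 j}-k_{i_1 j})+l_{i_1 j}\leq l_{i_2 j}$ then goes through. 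Note that this single move repairs both of your difficulties at once: the differences are natural numbers, so only ``nondecreasing'' extractions (via Lemma 3.10) are ever needed, and the shift by $\ord_k\gamma$ cancels exactly. A secondary flaw in your write-up: your appeal to Remark 3.6 requires the exponents of $\gamma$ to be componentwise large, and $\ord\gamma\to\infty$ along the chain does not guarantee this (individual exponents of $\gamma$ may stay bounded); the paper's proof sidesteps Remark 3.6 entirely by the direct inequality above.
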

\begin{proof}
Suppose that there is an infinite $E$-autoreduced set $\mathcal{A}$. It follows from Lemma 3.11 that $\mathcal{A}$ contains a sequence of $\sigma^{\ast}$-polynomials $\{f_{1}, f_{2},\dots\}$ such that  $u^{(1)}_{f_{i}}\,|\,u^{(1)}_{f_{i+1}}$  for $i = 1, 2,\dots$. Since the sequence of non-negative integers $\{\deg_{u^{(1)}_{f_{i}}}f_{i}\}$ cannot have an infinite decreasing subsequence, without loss of generality we can assume that $\deg_{u^{(1)}_{f_{i}}}f_{i}\leq \deg_{u^{(1)}_{f_{i+1}}}f_{i+1}$ ($i=1, 2,\dots$).

Let $k_{ij} = \ord_{j}u_{f_{i}}^{(1)}$, $l_{ij} = \ord_{j}u_{f_{i}}^{(j)}$, $n_{ij} = \ord_{j}v_{f_{i}}^{(j)}$ ($1\leq j\leq p$). Obviously, \,$l_{ij}\geq k_{ij}\geq n_{ij}$\, ($i = 1, 2,\dots ; j = 1,\dots, p$),\, so
$\{(l_{i1}-k_{i1}=0, l_{i2}-k_{i2}, \dots, l_{ip}-k_{ip})\,|\,i =1, 2, \dots \}\subseteq\mathbb{N}^{p}$ and $\{(k_{i1}-n_{i1}, k_{i2}-n_{i2}, \dots, k_{ip}-n_{ip}) \mid i =1, 2, \dots \}\subseteq\mathbb{N}^{p}$. By Lemma 3.10,  there exists an infinite sequence of indices $i_{1} < i_{2} < \dots $ such that
\begin{equation}
(l_{i_{1}2}-k_{i_{1}2},\dots, l_{i_{1}p}-k_{i_{1}p}) \leq_{P} (l_{i_{2}2}-k_{i_{2}2},\dots, l_{i_{2}p}-k_{i_{2}p}) \leq_{P} \dots
\end{equation}
and
\begin{equation}
(k_{i_{1}1}-n_{i_{1}1},\dots, k_{i_{1}p}-n_{i_{1}p}) \leq_{P} (k_{i_{1}1}-n_{i_{1}1},\dots, k_{i_{2}p}-n_{i_{2}p}) \leq_{P} \dots .
\end{equation}

Then for any $j = 2,\dots, p$ and for $\gamma_{12} = \D\frac{u_{f_{i_{2}}}^{(1)}}{u_{f_{i_{1}}}^{(1)}}$, we have (using (8)) $\ord_{j}u_{\gamma_{12}f_{i_{1}}}^{(j)}\leq \ord_{j}\gamma_{12}u_{f_{i_{1}}}^{(j)} =                              k_{i_{2}j} - k_{i_{1}j} + l_{i_{1}j} \leq k_{i_{2}j} + l_{i_{2}j} - k_{i_{2}j} = l_{i_{2}j} = \ord_{j}u_{f_{i_{2}}}^{(j)}$. Similar arguments with the use of (9) show that
$\ord_{j}(\tau v_{f_{i_{1}}}^{(j)}) \geq \ord_{j}v_{f_{i_{2}}}^{(j)}$ for $j = 2, \dots, p$. Thus, the $\sigma^{\ast}$-polynomial $f_{i_{2}}$ is not $E$-reduced with respect to $f_{i_{1}}$ that contradicts the fact that
$\mathcal{A}$ is an $E$-autoreduced set.
\end{proof}
In what follows, while considering $E$-autoreduced sets we always assume that their elements are arranged in order of increasing rank.

\begin{definition}
Let  $\mathcal{A} = \{g_{1},\dots, g_{s}\}$ and $\mathcal{B} = \{h_{1},\dots, h_{t}\}$ be two $E$-autoreduced sets in the ring $K\{y_{1},\dots, y_{n}\}^{\ast}$. Then  $\mathcal{A}$ is said to have lower rank than $\mathcal{B}$, written as
$\rk \mathcal{A} <  \rk \mathcal{B}$, if one of the following two cases holds:

(1)\, $\rk g_{1} < \rk h_{1}$ or there exists $k\in\mathbb{N}$ such that $1 < k\leq \min \{s, t\}$, $\rk g_{i}=\rk h_{i}$ for $i=1,\dots,k-1$ and  $\rk g_{k} < \rk h_{k}$.

(2)\, $s > t$ and  $\rk g_{i} = \rk h_{i}$ for $i=1,\dots, t$.

If $s=t$ and $\rk g_{i} = \rk h_{i}$ for $i=1,\dots, s$, then $\mathcal{A}$ is said to have the same rank as $\mathcal{B}$; in this case we write $\rk \mathcal{A} = \rk \mathcal{B}$
\end{definition}

\begin{proposition}
In every nonempty family of $E$-autoreduced sets of difference polynomials there exists an $E$-autoreduced set of lowest rank.
\end{proposition}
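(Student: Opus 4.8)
The plan is to run the classical Ritt--Kolchin rank-minimization argument, adapted to $E$-autoreduced sets, with Proposition 3.6 and Proposition 3.12 as the two structural inputs. First I would record that the rank of a single $\sigma^{\ast}$-polynomial, given by the $(2p+1)$-tuple of Definition 3.3, ranges over a well-ordered set. Indeed, $<_{1}$ well-orders $\Gamma Y$: given a nonempty set of terms one successively minimizes $\ord_{1},\ord_{2},\dots,\ord_{p}$ (each in $\mathbb{N}$), after which only finitely many terms remain and a $<_{1}$-least one exists. Since the remaining rank coordinates (the degree in the $1$-leader, the numbers $\ord_{i}u^{(i)}$ for $i\geq 2$, and $\Eord_{1},\dots,\Eord_{p}$) lie in $\mathbb{N}$, the rank set is a finite lexicographic product of well-ordered sets, hence well-ordered (elements of $K$ receiving the least rank). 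Thus any nonempty collection of $\sigma^{\ast}$-polynomials contains one of least rank, which is all the selection needs.

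Next I would carry out the selection. Put $\Psi_{0}=\{\mathcal{A}\in\Phi\,:\,\mathcal{A}\neq\emptyset\}$ (if $\Psi_{0}=\emptyset$ then $\Phi=\{\emptyset\}$ and we are done). Having built nonempty families $\Psi_{0}\supseteq\dots\supseteq\Psi_{k-1}$ and ranks $\bar\rho_{1}<\dots<\bar\rho_{k-1}$ so that every set in $\Psi_{k-1}$ has at least $k-1$ elements, whose ranks are $\bar\rho_{1},\dots,\bar\rho_{k-1}$, let $\Psi_{k-1}^{\geq k}$ be those sets in $\Psi_{k-1}$ having at least $k$ elements. If $\Psi_{k-1}^{\geq k}=\emptyset$, stop; otherwise let $\bar\rho_{k}$ be the least rank of a $k$-th element of a set in $\Psi_{k-1}^{\geq k}$, put $\Psi_{k}=\{\mathcal{A}\in\Psi_{k-1}^{\geq k}\,:\,\rk(k\text{-th element of }\mathcal{A})=\bar\rho_{k}\}$, and fix a set $B_{k}\in\Psi_{k}$ together with its $k$-th element $A_{k}$ (so $\rk A_{k}=\bar\rho_{k}$). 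Since ranks strictly increase inside any autoreduced set, $\bar\rho_{1}<\bar\rho_{2}<\dots$.

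I would then show the procedure halts. Suppose not; I claim $\{A_{1},A_{2},\dots\}$ is an infinite $E$-autoreduced set, contradicting Proposition 3.12. For $i<j$ we have $\rk A_{i}=\bar\rho_{i}<\bar\rho_{j}=\rk A_{j}$, so $A_{i}$ is $E$-reduced with respect to $A_{j}$ by Proposition 3.6. For $i>j$ the set $B_{i}\in\Psi_{i}\subseteq\Psi_{j}$ has a $j$-th element of rank $\bar\rho_{j}$, and $A_{i}$ (its $i$-th element) is $E$-reduced with respect to that $j$-th element because $B_{i}$ is autoreduced; it then remains to transfer this to $A_{j}$, which also has rank $\bar\rho_{j}$ but may be a different polynomial. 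This transfer is the crux: I must argue that whether $f$ is $E$-reduced with respect to $g$ depends only on $f$ and on $\rk g$. Here Remark 3.9 enters: the orders $\ord_{k}u^{(k)}_{\gamma g}$ and $\ord_{j}v^{(j)}_{\gamma g}$ of the leaders and coleaders of every transform $\gamma g$ are affine in $\ord\gamma$ with constant terms $\ord_{k}u^{(k)}_{g}$ and $\ord_{j}v^{(j)}_{g}$, all recoverable from $\rk g$ (together with $u^{(1)}_{g}$ and $\deg_{u^{(1)}_{g}}g$); hence every inequality occurring in Definition 3.4 is determined by $f$ and $\rk g$. Granting this, $A_{i}$ is $E$-reduced with respect to $A_{j}$, which proves the claim and the contradiction.

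Finally, termination at some step $K$ means $\Psi_{K-1}^{\geq K}=\emptyset$, so every member of $\Psi_{K-1}$ is a set of exactly $K-1$ elements of ranks $\bar\rho_{1},\dots,\bar\rho_{K-1}$; fix one, $\mathcal{A}^{\ast}$. To see that $\mathcal{A}^{\ast}$ has lowest rank in $\Phi$, take any $\mathcal{C}\in\Phi$ and compare rank sequences in the sense of Definition 3.13: by minimality of $\bar\rho_{1}$ over $\Psi_{0}$, then of $\bar\rho_{2}$ over $\Psi_{1}^{\geq 2}$, and so on, either $\mathcal{C}$ is strictly larger at the first position where its rank exceeds some $\bar\rho_{k}$, or it agrees with $\bar\rho_{1},\dots,\bar\rho_{r}$ for $r=\min\{\Card\mathcal{C},\,K-1\}$; in the latter case $\mathcal{C}$ cannot have $\geq K$ elements (that would place it in the empty $\Psi_{K-1}^{\geq K}$), so $\mathcal{C}$ is either equal in rank to $\mathcal{A}^{\ast}$ or shorter, whence $\rk\mathcal{A}^{\ast}\leq\rk\mathcal{C}$. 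The main obstacle, as indicated, is the rank-only dependence of $E$-reducedness required in the case $i>j$; the well-ordering of ranks and the bookkeeping in this minimality check are routine by comparison.
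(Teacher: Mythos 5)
Your proposal follows the same Ritt--Kolchin scheme as the paper's proof: nested subfamilies obtained by minimizing the rank of the $k$th element (your $\Psi_k$ are the paper's $\mathcal{M}_k$), a diagonal sequence, and a contradiction with Proposition 3.12; your preliminary well-ordering observation and the closing minimality bookkeeping are fine and match the paper's last sentence. The problem is exactly the step you yourself flag as the crux. You need the implication: if $\rk g' = \rk g$ and $A_i$ is $E$-reduced with respect to $g'$, then $A_i$ is $E$-reduced with respect to $g$; and you justify it by claiming that $\ord_k u^{(k)}_{\gamma g}$ and $\ord_j v^{(j)}_{\gamma g}$ are affine in $\ord\gamma$ with constant terms $\ord_k u^{(k)}_{g}$ and $\ord_j v^{(j)}_{g}$, ``all recoverable from $\rk g$.'' That is not what the paper's Remark 3.6 says, and the claim is false. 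The remark gives $\ord_k u^{(k)}_{\gamma g} = \ord\gamma + a_{kg}$ and $\ord_j v^{(j)}_{\gamma g} = \ord\gamma + b_{jg}$ only for $\gamma$ ranging in one fixed orthant $\Gamma_l$ (and, per its first sentence, only for $\gamma$ sufficiently deep in that orthant), where the constants $a_{kg}, b_{jg}$ record which terms of $g$ become the asymptotic leaders and coleaders in that orthant; they are data about the full term structure of $g$ and are not determined by the rank tuple of Definition 3.3. A counterexample: let $m=2$, $p=1$, $g = \alpha_1\alpha_2 y + \alpha_1 y$ and $g' = \alpha_1\alpha_2 y + \alpha_1^{-1} y$; both have rank $(\alpha_1\alpha_2 y,\,1,\,1)$. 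Take $f = \alpha_1^{2}\alpha_2^{2} y + \alpha_1^{3} y$, so $\ord_1 v^{(1)}_{f} = 3$; the only multiple of $\alpha_1\alpha_2 y$ occurring in $f$ is $\alpha_1^{2}\alpha_2^{2}y = \gamma(\alpha_1\alpha_2 y)$ with $\gamma = \alpha_1\alpha_2$ (the term $\alpha_1^{3}y$ is not one, since $\alpha_1^{2}\alpha_2^{-1}\not\sim\alpha_1\alpha_2$). Then $v^{(1)}_{\gamma g} = \alpha_1^{2}\alpha_2 y$ has $\ord_1 = 3 \not< 3$, so $f$ is \emph{not} $E$-reduced with respect to $g$, whereas $v^{(1)}_{\gamma g'} = \alpha_2 y$ has $\ord_1 = 1 < 3$, so $f$ \emph{is} $E$-reduced with respect to $g'$ (here $b_{1g} = 1$ but $b_{1g'} = -1$, even though $\ord_1 v^{(1)}_{g} = \ord_1 v^{(1)}_{g'} = 1$). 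Hence $E$-reducedness with respect to a polynomial is genuinely not a function of that polynomial's rank, and your transfer step collapses.

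For comparison: the paper's proof does not address this point at all; it simply asserts that the diagonal set $\{A_k\}$ is an infinite $E$-autoreduced set and invokes Proposition 3.12, which tacitly uses the same transfer principle (the half ``$A_i$ is $E$-reduced with respect to $A_j$ for $i<j$'' is genuine and follows from the paper's Proposition 3.7, exactly as you argue; it is the opposite half that needs rank-independence, and it is precisely that half whose failure the proof of Proposition 3.12 exploits). So you have correctly located the delicate step of the published argument --- that is to your credit --- but the patch you offer does not close it: the principle you invoke is false, so as written the proof has a genuine gap at its declared crux, and any repair must proceed differently (for instance, by making the choices of the sets $B_k$ and of the diagonal elements coherent with one another, or by reworking the finiteness argument so that only rank data is used), not by arguing that $E$-reducedness depends only on $\rk g$.
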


\begin{proof}
In order to proof the proposition, we will mimic the proof of the corresponding statement for differential polynomials, see \cite[Chapter 1, Proposition 3]{Kolchin2}  as follows. Let $\mathcal{M}$ be a nonempty family of $E$-autoreduced sets in the ring  $K\{y_{1},\dots, y_{n}\}^{\ast}$. Let us inductively define an infinite descending chain of subsets of $\mathcal{M}$ as follows:
$\mathcal{M}_{0}=\mathcal{M}$, $\mathcal{M}_{1}=\{\mathcal{A}\in \mathcal{M}_{0}\,|\,\mathcal{A}$ contains at least one element and the first element of $\mathcal{A}$ is of lowest possible rank$\}, \dots ,
\mathcal{M}_{k}=\{\mathcal{A}\in \mathcal{M}_{k-1}\,|\,\mathcal{A}$ contains at least $k$ elements and the $k$th element of $\mathcal{A}$ is of lowest possible rank$\}, \dots $. It is clear that if $\mathcal{A}$ and $\mathcal{B}$ are any two $E$-autoreduced sets in $\mathcal{M}_{k}$ and $f$ and $g$ are their $l$th $\sigma$-polynomials ($l\geq k$), then $\rk f = \rk g$. Therefore, if all sets $\mathcal{M}_{k}$ are nonempty, then the set $\{A_{k}\,|\,A_{k}$ is the $k$th element of some $E$-autoreduced set in $\mathcal{M}_{k}\}$ would be an infinite $E$-autoreduced set, and this would contradict Proposition 3.12. Thus, there is the smallest
positive integer $k$ such that $\mathcal{M}_{k}=\emptyset$. Clearly, every element of $\mathcal{M}_{k-1}$ is an $E$-autoreduced set of lowest rank in the family $\mathcal{M}$.
\end{proof}

Let $J$ be any nonzero ideal of the ring   $K\{y_{1},\dots, y_{n}\}^{\ast}$. Since the set of all $E$-autoreduced subsets of $J$ is not empty (if $0\neq f\in J$, then $\{f\}$ is an  $E$-autoreduced subset of $J$), the last statement shows that  $J$ contains an  $E$-autoreduced subset of lowest rank. Such an  $E$-autoreduced set is called an {\bf  $E$-characteristic set} of the ideal $J$.

\begin{proposition}
Let $\mathcal{A} = \{f_{1}, \dots , f_{d}\}$ be an  $E$-characteristic set of a $\sigma$-ideal $J$ of the ring  $K\{y_{1},\dots, y_{n}\}^{\ast}$.  Then an element $g\in J$ is $E$-reduced with respect to the set $\mathcal{A}$ if and only if $g = 0$.
\end{proposition}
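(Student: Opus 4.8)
The plan is to treat the two implications separately; the substance lies entirely in the ``only if'' direction. The ``if'' direction is immediate: the zero polynomial contains no terms, so condition (i) of Definition 3.4 is satisfied vacuously against every $f_{i}$, and hence $0$ is $E$-reduced with respect to $\mathcal{A}$. For the converse I would argue by contradiction: suppose $g\in J$ is $E$-reduced with respect to $\mathcal{A}$ but $g\neq 0$, and produce an $E$-autoreduced subset of $J$ of rank strictly lower than $\rk\mathcal{A}$, contradicting the choice of $\mathcal{A}$ as an $E$-characteristic set, i.e. a lowest-rank $E$-autoreduced subset of $J$ (Proposition 3.14 guarantees such a set exists). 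First I note that $g\notin K$: a nonzero element of $K$ is a unit of $R=K\{y_{1},\dots,y_{n}\}^{\ast}$, so its membership in $J$ would force $J=R$, a degenerate case excluded for the proper ideals of interest (e.g. defining ideals). Thus $g$ is a genuine $\sigma^{\ast}$-polynomial.

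With the elements of $\mathcal{A}$ listed in increasing rank, let $j$ be the number of indices $i$ with $\rk f_{i}<\rk g$; since the ranks increase, these are exactly $f_{1},\dots,f_{j}$. I would then form $\mathcal{B}=\{f_{1},\dots,f_{j},g\}$ and check that it is an $E$-autoreduced subset of $J$. It lies in $J$ since $f_{1},\dots,f_{j}\in\mathcal{A}\subseteq J$ and $g\in J$, and $\mathcal{B}\cap K=\emptyset$. Any two of $f_{1},\dots,f_{j}$ are mutually $E$-reduced because $\mathcal{A}$ is $E$-autoreduced; each $f_{i}$ ($i\leq j$) is $E$-reduced with respect to $g$ by Proposition 3.7, since $\rk f_{i}<\rk g$; and $g$ is $E$-reduced with respect to each $f_{i}$ by hypothesis. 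To compare $\rk\mathcal{B}$ and $\rk\mathcal{A}$ through Definition 3.13, the decisive comparison (when $j<d$) is between $\rk g$ and $\rk f_{j+1}$. Granting the claim below, $f_{j+1}$ does not have rank lower than $g$ by the choice of $j$, and $\rk f_{j+1}\neq\rk g$ (as $g$ is $E$-reduced with respect to $f_{j+1}$), whence $\rk g<\rk f_{j+1}$ and Definition 3.13(1) yields $\rk\mathcal{B}<\rk\mathcal{A}$. If instead $j=d$, then $\mathcal{B}=\{f_{1},\dots,f_{d},g\}$ agrees with $\mathcal{A}$ on its first $d$ entries but is longer, so $\rk\mathcal{B}<\rk\mathcal{A}$ by Definition 3.13(2). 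In either case we reach the desired contradiction.

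The step I expect to be the main obstacle, and which I would isolate as a lemma, is the claim that \emph{two $\sigma^{\ast}$-polynomials of equal rank are never $E$-reduced with respect to each other}. To prove it, suppose $\rk g=\rk h$, so the $(2p+1)$-tuples in (7) coincide: in particular $u_{g}^{(1)}=u_{h}^{(1)}$, $\deg_{u_{g}^{(1)}}g=\deg_{u_{h}^{(1)}}h=:d'$, $\ord_{k}u_{g}^{(k)}=\ord_{k}u_{h}^{(k)}$ for $2\leq k\leq p$, and $\Eord_{k}g=\Eord_{k}h$ for $1\leq k\leq p$. Taking $\gamma=1$ (which is similar to every term) and $e=d'$, the term $(\gamma u_{h}^{(1)})^{e}=(u_{g}^{(1)})^{d'}$ appears in $g$ with $e=d'\geq\deg_{u_{h}^{(1)}}h$, so condition (i) of Definition 3.4 fails; and since $u_{\gamma h}^{(k)}=u_{h}^{(k)}$ and $v_{\gamma h}^{(j)}=v_{h}^{(j)}$, the equalities of the leaders' orders rule out the first alternative of condition (ii). For the second alternative, $\ord_{j}u_{g}^{(j)}=\ord_{j}u_{h}^{(j)}$ holds for every $j$ (for $j\geq 2$ from the tuple, for $j=1$ because $u_{g}^{(1)}=u_{h}^{(1)}$), so the identity $\Eord_{j}=\ord_{j}u^{(j)}-\ord_{j}v^{(j)}$ together with $\Eord_{j}g=\Eord_{j}h$ forces $\ord_{j}v_{h}^{(j)}=\ord_{j}v_{g}^{(j)}$ for all $j$, ruling out $\ord_{j}v_{h}^{(j)}<\ord_{j}v_{g}^{(j)}$. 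Hence neither alternative of Definition 3.4(ii) holds, so $g$ is not $E$-reduced with respect to $h$, which proves the lemma (and incidentally shows that distinct members of an $E$-autoreduced set have distinct ranks). This lemma is the crux, since it is exactly what forces the strict inequality $\rk g<\rk f_{j+1}$ that drives the rank comparison.
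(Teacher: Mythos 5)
Your proof is correct, and its skeleton coincides with the paper's own: assuming $g\neq 0$, both arguments form the set $\mathcal{B}=\{f_{1},\dots,f_{j},g\}$, where $f_{1},\dots,f_{j}$ are the elements of $\mathcal{A}$ of rank lower than $\rk g$, verify that $\mathcal{B}$ is $E$-autoreduced (mutual $E$-reducedness inside $\mathcal{A}$, the hypothesis on $g$, and Proposition 3.7 applied to each $f_{i}$ against $g$), and contradict the minimality of $\rk\mathcal{A}$. The genuine difference is your isolated lemma that two $\sigma^{\ast}$-polynomials outside $K$ of equal rank are never $E$-reduced with respect to each other, proved by taking $\gamma=1$ in Definition 3.4 and using $\Eord_{k}=\ord_{k}u^{(k)}-\ord_{k}v^{(k)}$ to rule out both escape clauses of condition (ii); this lemma appears nowhere in the paper. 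The paper's proof distinguishes only the cases $\rk g<\rk f_{1}$ and $\rk g>\rk f_{1}$ (ties are never mentioned), and its passage from the choice of $j$ to $\rk\mathcal{B}<\rk\mathcal{A}$ tacitly assumes the strict inequality $\rk g<\rk f_{j+1}$. Under Definition 3.13 that strictness is indispensable: if $\rk g=\rk f_{j+1}$ held with $j<d$, one would get $\rk\mathcal{A}\leq\rk\mathcal{B}$ (equality when $d=j+1$, and $\rk\mathcal{A}<\rk\mathcal{B}$ by case (2) when $d>j+1$), so no contradiction would follow. Your lemma is exactly what excludes ties, so your write-up closes a real, if small, gap in the published argument. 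The same holds for your explicit disposal of the case $g\in K\setminus\{0\}$: the paper's opening comparison of $\rk\{g\}$ with $\rk\mathcal{A}$ presupposes that $\{g\}$ is $E$-autoreduced, i.e. $g\notin K$, and indeed the proposition as literally stated fails for $J=R$ (there $1\in J$ is nonzero yet vacuously $E$-reduced), so properness of $J$ must be assumed, as you note. In short: same route as the paper, but carried out in full; what your version buys is rigor at the two points the paper glosses over.
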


\begin{proof}  First of all, note that if $g\neq 0$ and $\rk\,g < \rk\,f_{1}$, then $\rk\,\{g\} < \rk\,\mathcal{A}$ that contradicts the fact that $\mathcal{A}$ is a $E$-characteristic set of the ideal $J$. Let $\rk\,g > \rk\,f_{1}$ and let $f_{1},\dots, f_{j}$ ($1\leq j\leq d$) be all elements of $\mathcal{A}$ whose rank is lower that the rank of $g$. Then the set $\mathcal{A}' = \{f_{1},\dots, f_{j}, g\}$ is $E$-autoreduced. Indeed,
by the conditions of the proposition, $\sigma$-polynomials $f_{1},\dots, f_{j}$ are $E$-reduced with respect to each other and $g$ is $E$-reduced with respect to the set $\{f_{1},\dots, f_{j}\}$.  Furthermore, each
$f_{i}$  ($1\leq i\leq j$) is $E$-reduced with respect to $g$ because $\rk\,f_{i} < \rk\,g$. Since $\rk\,\mathcal{A}' < \rk\,\mathcal{A}$, $\mathcal{A}$ is not an $E$-characteristic set of $J$ that contradicts the conditions of the proposition. Thus, $g = 0$.
\end{proof}

It follows from Remark 3.5 that every autoreduced (respectively, characteristic) set of an ideal $J$ of $K\{y_{1},\dots, y_{n}\}^{\ast}$ in the sense of \cite[Definitions 3.4.23 and 3.4.31]{KLMP} with respect to $<_{1}$ is an $E$-autoreduced (respectively, $E$-characteristic) set of $J$. Therefore, one can apply \cite[Corollary 6.5.4]{KLMP} to obtain the following statement.

\begin{proposition}
Let $\preceq$ be a preorder on $K\{y_{1},\dots, y_{n}\}^{\ast}$ such that $f\preceq g$ if and only if $u_{g}^{(1)}$ is a transform of $u_{f}^{(1)}$. Let $f$ be a linear $\sigma^{\ast}$-polynomial in $K\{y_{1},\dots, y_{n}\}^{\ast}\setminus K$. Then the set of all minimal with respect to $\preceq$  elements of the set $\{\gamma f\,|\,\gamma\in\Gamma\}$ is an $E$-characteristic set of the $\sigma^{\ast}$-ideal $[f]^{\ast}$.
\end{proposition}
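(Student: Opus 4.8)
The plan is to derive the statement from the classical theory of characteristic sets of linear difference ideals and then transfer it to the $E$-setting. Since $f$ is linear, $[f]^{\ast}$ is a linear $\sigma^{\ast}$-ideal and hence prime by \cite[Proposition 2.4.9]{L4}, so the characteristic-set machinery of \cite[Chapter 6]{KLMP} applies. Writing $\mathcal{C}$ for the set of $\preceq$-minimal elements of $\{\gamma f\mid\gamma\in\Gamma\}$ --- that is, those $\gamma f$ whose $1$-leader $u_{\gamma f}^{(1)}$ is not a proper transform of the $1$-leader of any other element of the orbit --- the core step is to invoke \cite[Corollary 6.5.4]{KLMP} with the single admissible ordering $<_{1}$: it gives that $\mathcal{C}$ is a characteristic set of $[f]^{\ast}$ in the sense of \cite[Definitions 3.4.23 and 3.4.31]{KLMP} with respect to $<_{1}$. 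Before applying it I would confirm that the preorder $\preceq$ (transform-divisibility of $1$-leaders) and the ordering $<_{1}$ match the orbit-minimality and admissible ordering used there, and note (via Remark 3.6) that the $\preceq$-minimal $1$-leaders are finite in number, consistently with Proposition 3.12.

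It then remains only to pass from an ordinary characteristic set to an $E$-characteristic set, which is exactly the content of the assertion recorded immediately before the proposition (itself a consequence of Remark 3.5): any characteristic set of $[f]^{\ast}$ in the sense of \cite[Definitions 3.4.23 and 3.4.31]{KLMP} taken with respect to $<_{1}$ is an $E$-characteristic set of $[f]^{\ast}$. Applying this to $\mathcal{C}$ yields the proposition.

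The step I expect to demand the most care is this final transfer, which is not formal. The $E$-rank of Definition 3.3 is a $(2p+1)$-tuple whose first two coordinates $(u_{g}^{(1)},\deg_{u_{g}^{(1)}}g)$ are the ordinary rank with respect to $<_{1}$, so $E$-rank refines ordinary rank, and by Remark 3.5 ordinary reducedness (condition (i) of Definition 3.4) implies $E$-reducedness; hence $\mathcal{C}$ is certainly $E$-autoreduced. The genuine issue is the lowest-rank clause: because $E$-reducedness is strictly weaker than ordinary reducedness (condition (ii) of Definition 3.4 may hold), one cannot simply quote the ordinary lowest-rank property to exclude an $E$-autoreduced subset of $[f]^{\ast}$ of strictly smaller $E$-rank than $\mathcal{C}$. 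To settle this directly in the linear case one would combine the fact that $\mathcal{C}$, being an ordinary characteristic set, leaves no nonzero element of $[f]^{\ast}$ ordinarily reduced, with the observation that linearity forces the $1$-leader of every nonzero element of $[f]^{\ast}$ to be a transform of the $1$-leader of some member of $\mathcal{C}$; these pin down the leading coordinates of the $E$-rank of any competitor, after which a comparison of the remaining coordinates $\ord_{i}u^{(i)}$ and $\Eord_{i}$ forces $\mathcal{C}$ to be of lowest $E$-rank.
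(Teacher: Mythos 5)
Your main line is exactly the paper's proof: the paper disposes of Proposition 3.16 in the two sentences preceding it, namely that \cite[Corollary 6.5.4]{KLMP} exhibits the set $\mathcal{C}$ of $\preceq$-minimal orbit elements as a characteristic set of $[f]^{\ast}$ in the sense of \cite[Definitions 3.4.23 and 3.4.31]{KLMP} with respect to $<_{1}$, and that (``it follows from Remark 3.5'') every such characteristic set is an $E$-characteristic set. So in approach you and the paper coincide; the interesting part of your proposal is the extra scrutiny of the transfer step, and there your concern is justified. Remark 3.5 only transfers autoreducedness; the lowest-rank clause does not transfer formally, because the pool of $E$-autoreduced competitors is strictly larger than the pool of ordinary autoreduced sets and the $E$-rank of Definition 3.3 strictly refines the ordinary rank. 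In fact the blanket transfer assertion is false for general characteristic sets: if $\mathcal{A}=\{c_{1},\dots,c_{q}\}$ is an ordinary characteristic set of an ideal $J$ with respect to $<_{1}$, all leaders $u^{(1)}_{c_{l}}$ have positive order, and some coleader $v^{(j)}_{c_{i}}$ has positive order, then replacing $c_{i}$ by $y_{k}c_{i}$ leaves the leaders, the degrees in the leaders, and ordinary reducedness unchanged, so $\{c_{1},\dots,y_{k}c_{i},\dots,c_{q}\}$ is again an ordinary characteristic set of $J$; but all coleaders of $y_{k}c_{i}$ become $y_{k}$, of order zero, so $\Eord_{j}(y_{k}c_{i})=\ord_{j}u^{(j)}_{c_{i}}>\Eord_{j}(c_{i})$ and the new set has strictly larger $E$-rank. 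Since both sets are $E$-autoreduced, they cannot both be $E$-characteristic sets. Hence a correct proof of the proposition must use properties specific to the set $\mathcal{C}$ of minimal orbit elements, which the paper's two-sentence argument does not do.

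Unfortunately, your proposed repair does not close this gap either. Its key claim --- that linearity forces the $1$-leader of every nonzero element of $[f]^{\ast}$ to be a transform of the $1$-leader of some member of $\mathcal{C}$ --- is false: for any term $w\in\Gamma Y$ that is not a multiple of any $u^{(1)}_{c}$ ($c\in\mathcal{C}$), for instance a term of high order in a $\sigma^{\ast}$-indeterminate not occurring in $f$, and with $w>_{1}u^{(1)}_{\gamma f}$, the element $w\cdot\gamma f$ lies in $[f]^{\ast}$ and its $1$-leader is $w$ itself (the leader is the greatest \emph{term} appearing in the polynomial, and $w$ appears in every monomial of $w\cdot\gamma f$). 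What the ordinary characteristic-set property of $\mathcal{C}$ actually yields is weaker: every nonzero element of $[f]^{\ast}$ merely \emph{contains}, among its terms, some transform $\gamma u^{(1)}_{c}$ with $\gamma\sim u^{(1)}_{c}$. With only this input, the ``comparison of the remaining coordinates $\ord_{i}u^{(i)}$ and $\Eord_{i}$'' that you defer is precisely the unproved substance of the minimality claim (one must show that the minimal orbit elements also minimize these coordinates among members of competing $E$-autoreduced subsets of $[f]^{\ast}$), and it is not carried out. So your proposal amounts to the paper's proof plus a supplementary argument that, as sketched, fails at a concrete step; to be fair, the gap it aims at is one the paper's own proof leaves open as well.
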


\section{A new type of multivariate dimension polynomials of $\sigma^{\ast}$-field extensions}

In this section we use properties of $E$-characteristic sets to obtain the following result that generalizes Theorem 2.1 and introduces a new type of multivariate dimension polynomials of finitely generated inversive difference field extensions that carry more invariants than any previously known difference dimension polynomials. (By an invariant of an inversive difference ($\sigma^{\ast}$-) field extension we mean a numerical characteristic that does not depend on the choice of the finite set of its $\sigma^{\ast}$-generators.)  As before, $K$ denotes an inversive difference ($\sigma^{\ast}$-) field with a basic set $\sigma = \{\alpha_{1},\dots, \alpha_{m}\}$ considered together with its partition (6) into the union of $p$ disjoint subsets $\sigma_{i}$, $\Card\sigma_{i} = m_{i}$ ($1\leq i\leq p$). Furthermore, for any two $p$-tuples
$(r_{1},\dots, r_{p}), (s_{1},\dots, s_{p})\in\mathbb{N}^{p}$ with $s_{i}\leq r_{i}$ for $i=1,\dots, p$, we set
$$\Gamma(r_{1}, \dots, r_{p}; s_{1},\dots, s_{p}) = \{\gamma\in\Gamma\,|\,s_{i}\leq\ord_{i}\gamma\leq r_{i}\,\,\text{for}\,\, i=1,\dots, p\}.$$

\begin{theorem}
Let $L = K\langle \eta_{1},\dots,\eta_{n}\rangle^{\ast}$ be a $\sigma^{\ast}$-field extension generated by a set $\eta = \{\eta_{1}, \dots , \eta_{n}\}$. Then there exists a polynomial
$\Phi_{\eta\,|\,K}(t_{1},\dots, t_{2p})$ in $2p$ variables with rational coefficients and numbers $r^{(0)}_{i}, s_{i}^{(0)}, s^{(1)}_{i}\in\mathbb{N}$ ($1\leq i\leq p$) with $s_{i}^{(1)} < r^{(0)}_{i}-s_{i}^{(0)}$ such that
$$\Phi_{\eta\, |\,K}(r_{1},\dots, r_{p}, s_{1},\dots, s_{p}) = \hspace{4in}$$ $$\trdeg_{K}K(\{\gamma\eta_{j}\,|\,\gamma\in\Gamma(r_{1}, \dots, r_{p}; s_{1},\dots, s_{p}), 1\leq j\leq n\})\hspace{4in}$$
for all $(r_{1},\dots, r_{p}, s_{1},\dots, s_{p})\in\mathbb{N}^{2p}$ with $r_{i}\geq r_{i}^{(0)}$, $s_{i}^{(1)}\leq s_{i}\leq r_{i}-s_{i}^{(0)}$. Furthermore,
$\deg\Phi_{\eta\,|\,K}\leq m$, $\deg_{t_{i}}\Phi_{\eta\,|\,K}\leq m_{i}$ for $i=1,\dots, p$ and $\deg_{t_{j}}\Phi_{\eta\,|\,K}\leq m_{j-p}$ for $j=p+1,\dots, 2p$.
\end{theorem}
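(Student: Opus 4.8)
```latex
\textbf{Proof proposal.}
The plan is to follow the classical Kolchin/Mikhalev strategy for dimension
polynomials, but carried out through the $E$-characteristic set machinery
developed in Section~3. First I would pass to the defining ideal
$P=\Ker\pi$ of the extension $L/K$, where
$\pi\colon R=K\{y_{1},\dots,y_{n}\}^{\ast}\to L$ is the natural
$\sigma$-homomorphism, and fix an $E$-characteristic set
$\mathcal{A}=\{f_{1},\dots,f_{d}\}$ of $P$ with respect to the orderings
$<_{1},\dots,<_{p}$. The $1$-leaders $u^{(1)}_{f_{k}}=\gamma_{k}y_{i_{k}}$
single out, for each indeterminate $y_{i}$, a family of ``leading'' terms; the
terms $\gamma y_{j}$ that are \emph{not} transforms of any $1$-leader (equivalently,
whose defining $m$-tuple lies in a suitable complement set in $\mathbb{Z}^{m}$)
should constitute, after intersecting with
$\Gamma(r_{1},\dots,r_{p};s_{1},\dots,s_{p})$, a transcendence basis of the field
$K(\{\gamma\eta_{j}\mid\gamma\in\Gamma(r_{1},\dots,r_{p};s_{1},\dots,s_{p}),
1\leq j\leq n\})$ over $K$. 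This is the conceptual heart: counting these
non-leading terms will give the desired polynomial.

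The key steps, in order, are as follows. First I would establish the
\emph{algebraic independence} of the non-leading transforms $\gamma\eta_{j}$ by
the standard argument: any algebraic relation among them would produce a nonzero
$\sigma^{\ast}$-polynomial in $P$ that is $E$-reduced with respect to
$\mathcal{A}$, contradicting Proposition~3.16. Second, for the \emph{spanning}
direction I would show that every leading transform $\gamma\eta_{i}$ (with
$\gamma y_{i}$ a transform of some $u^{(1)}_{f_{k}}$) is algebraic over $K$
adjoined to the non-leading transforms of no higher rank; here I apply the
reduction process of Proposition~3.3 (realized by Algorithm~1) to
$\gamma f_{k}$, using that the separant/initial factors $\gamma(I_{k})$ do not
vanish at $\eta$. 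The subtle point, and where the $E$-reduction refinement over
the ordinary reduction of \cite{L6} is essential, is that the effective-order and
coleader conditions in Definition~3.4 guarantee that reducing $\gamma f_{k}$
introduces only transforms whose orders with respect to each $\sigma_{i}$ stay
within the \emph{window} $[s_{i},r_{i}]$; this is exactly what Remark~3.6
(via \cite{ZW}) provides through the stabilized leaders $u_{fjk}$, $v_{fjk}$ and
the linear formulas $\ord_{i}u^{(i)}_{\gamma f}=\ord\gamma+a_{if}$,
$\ord_{i}v^{(i)}_{\gamma f}=\ord\gamma+b_{if}$.

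With independence and spanning in hand, the transcendence degree equals the
cardinality of the set of non-leading terms $\gamma y_{j}$ lying in
$\Gamma(r_{1},\dots,r_{p};s_{1},\dots,s_{p})$. I would then translate this count
into the language of Section~2: the excluded $1$-leaders $\gamma_{k}y_{i_{k}}$
determine, for each $y_{i}$, a finite set $A_{i}\subseteq\mathbb{Z}^{m}$ of
exponent vectors, and the non-leading terms correspond to $W_{A_{i}}$ in the sense
of the order $\unlhd$. The count over the annular region
$s_{i}\leq\ord_{i}\gamma\leq r_{i}$ is obtained by combining Theorem~2.8 (which
gives $\phi_{A_{i}}=\omega_{B_{i}}$ as an honest numerical polynomial in the
variables $t_{1},\dots,t_{p}$) with the inclusion--exclusion difference
$\Card W_{A_{i}}(r_{1},\dots,r_{p})-\Card W_{A_{i}}(r_{1},\dots,r_{p};
s_{1}-1,\dots)$, precisely the two-sequence counting formula recorded in
Remark~2.10. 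Summing over $j=1,\dots,n$ produces a single polynomial
$\Phi_{\eta\mid K}$ in the $2p$ variables $t_{1},\dots,t_{p}$ (for the $r_{i}$)
and $t_{p+1},\dots,t_{2p}$ (for the $s_{i}$), valid once $r_{i}\geq r^{(0)}_{i}$
and $s^{(1)}_{i}\leq s_{i}\leq r_{i}-s^{(0)}_{i}$, where the thresholds
$r^{(0)}_{i},s^{(0)}_{i},s^{(1)}_{i}$ come from the stabilization bound in
Remark~3.6 and the requirement $s_{i}\leq r_{i}$.

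\textbf{Main obstacle.} The degree bounds $\deg\Phi_{\eta\mid K}\leq m$,
$\deg_{t_{i}}\Phi_{\eta\mid K}\leq m_{i}$ and
$\deg_{t_{j}}\Phi_{\eta\mid K}\leq m_{j-p}$ follow from the corresponding bounds
in Theorems~2.4 and~2.8 once the counting identification is secure, so the bulk
of the work is \emph{not} the degree estimates. The genuinely hard part is the
spanning step in the annular region: I must verify that when a leading transform
$\gamma\eta_{i}$ with $\gamma\in\Gamma(r_{1},\dots,r_{p};s_{1},\dots,s_{p})$ is
reduced, \emph{all} the lower-rank transforms appearing in the reduction again
satisfy the lower bound $\ord_{i}(\cdot)\geq s_{i}$, not merely the upper bound.
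The coleader/effective-order conditions in Definition~3.4 were introduced
exactly to control the lower orders (via the $v^{(j)}$ coleaders), and the
delicate bookkeeping of these lower bounds through iterated reduction—ensuring
the reduced expression never leaves the window from below—is where I expect the
argument to require the most care, and where the choice of the threshold
$s^{(1)}_{i}$ with $s^{(1)}_{i}<r^{(0)}_{i}-s^{(0)}_{i}$ becomes indispensable.
```
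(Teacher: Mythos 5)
Your proposal correctly reproduces the skeleton of the paper's argument (pass to the defining ideal $P$, fix an $E$-characteristic set $\mathcal{A}$, prove independence of a distinguished set of transforms via Proposition~3.15 --- note you cite 3.16, which is the statement about linear polynomials --- and then count that set using Theorems~2.6/2.8 and Remark~2.10), but it contains a genuine gap at its central claim: the set you propose as a transcendence basis is too small. You take only the terms $\gamma y_{j}$ in the window $s_{i}\leq\ord_{i}\gamma\leq r_{i}$ that are \emph{not} transforms of any $1$-leader $u^{(1)}_{f_{k}}$ (the paper's set $U'(\overline{r},\overline{s})$). The paper's transcendence basis is strictly larger: it is $U'(\overline{r},\overline{s})\cup U''(\overline{r},\overline{s})$, where $U''(\overline{r},\overline{s})$ consists of those terms in the window that \emph{are} transforms of some $u^{(1)}_{f_{j}}$ but for which every representation $u=\gamma u^{(1)}_{f_{j}}$ ($\gamma\sim u^{(1)}_{f_{j}}$) forces $\gamma f_{j}$ out of the window --- either $\ord_{1}v^{(1)}_{\gamma f_{j}}<s_{1}$, or $\ord_{k}u^{(k)}_{\gamma f_{j}}>r_{k}$ for some $k\geq 2$, or $\ord_{i}v^{(i)}_{\gamma f_{j}}<s_{i}$ for some $i\geq 2$. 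The difficulty you flag as the ``main obstacle'' (verifying that reduction of a leading transform never leaves the window from below) is precisely the point where your approach breaks, and it cannot be repaired by more careful bookkeeping: for a term in $U''$ there simply is no reduction relation that stays inside the window, so such a term need not be algebraic over $K(U'_{\eta}(\overline{r},\overline{s}))$, and it must instead be adjoined to the basis. Once $U''$ is included, the spanning step becomes easy: a term in $W\setminus(U'\cup U'')$ has, by definition, a representation $\gamma'u^{(1)}_{f_{j}}$ with $\ord_{i}u^{(i)}_{\gamma'f_{j}}\leq r_{i}$ ($i\geq 2$) and $\ord_{l}v^{(l)}_{\gamma'f_{j}}\geq s_{l}$ (all $l$), so applying $\gamma'$ to $f_{j}(\eta)=0$ and using that the initials do not vanish at $\eta$ (lower rank than $f_{j}$, hence not in $P$) gives the algebraicity, and one concludes by induction along $<_{1}$.

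The omission propagates to your counting formula and makes the resulting polynomial wrong, not merely incompletely justified. Your candidate is $\sum_{j}\bigl[\omega_{j}(t_{1},\dots,t_{p})-\omega_{j}(t_{p+1}-1,\dots,t_{2p}-1)\bigr]$, i.e.\ the paper's $\psi_{\eta|K}(t_{1},\dots,t_{p})-\psi_{\eta|K}(t_{p+1}-1,\dots,t_{2p}-1)$, whereas the theorem requires the additional correction $\lambda(t_{1},\dots,t_{2p})=\Card U''(\overline{r},\overline{s})$, computed in the paper from the stabilization constants $a_{if_{j}},b_{if_{j}}$ of Remark~3.6 together with Remark~2.10 and inclusion--exclusion. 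This correction is not negligible: in Example~4.4 the set $U''_{\eta}$ contributes terms such as $(2a+1)(2r_{2}-2s_{2}+2)(2r_{3}-2s_{3}+2)$, so that, e.g., the coefficient of $t_{2}t_{3}$ in $\Phi_{\eta\,|\,K}$ is $4(4a+1)$ rather than the $8a$ your count would give; since Theorem~4.3 shows these top-degree coefficients are invariants of the extension, the discrepancy is essential. (The paper does prove $\deg\lambda<m$ with $\deg_{t_{i}}\lambda\leq m_{i}$, $\deg_{t_{p+i}}\lambda\leq m_{i}$, so the degree bounds in the statement survive the correction --- but only after $\lambda$ is actually included.)
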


\begin{proof}
Let $P\subseteq R=K\{y_{1},\dots, y_{n}\}$ be the defining $\sigma^{\ast}$-ideal of the extension $L/K$ and let ${\mathcal{A}} = \{f_{1},\dots, f_{q}\}$ be an $E$-characteristic set of $P$.  For any $\overline{r} = (r_{1},\dots, r_{p}), \overline{s} = (s_{1},\dots, s_{p})\in\mathbb{N}^{p}$ such that $\overline{s}\leq_{P}\overline{r}$ (that is, $s_{i}\leq r_{i}$ for $i=1,\dots, p$), let
$$W(\overline{r}, \overline{s}) = \{w\in\Gamma Y\,|\,s_{i}\leq \ord_{i}w\leq r_{i}\,\,\, \text{for}\,\,\, i=1,\dots, p \},\,\,\, \hspace{3in}$$
$$W_{\eta}(\overline{r}, \overline{s}) = \{w(\eta)\,|\,w\in W(\overline{r}, \overline{s})\},\hspace{6in}$$
$$U'(\overline{r}, \overline{s}) = \{u\in\Gamma Y\,|\,s_{i}\leq \ord_{i}u\leq r_{i}\,\,\,\, \text{for $i=1,\dots, p$ and $u$ is not a transform}\hspace{1in}$$
$$\text{of any}\,\,\, u^{(1)}_{f_{j}} \, (1\leq j\leq q)\},\hspace{5in}$$
$$U'_{\eta}(\overline{r}, \overline{s}) = \{u(\eta)\,|\,u\in U'(\overline{r}, \overline{s})\},\hspace{6in}$$
$$U''(\overline{r}, \overline{s}) = \{u\in\Gamma Y\,|\,s_{i}\leq \ord_{i}u\leq r_{i} \,(1\leq i\leq p),\,\,\, \text{$u$ is a transform of some $u^{(1)}_{f_{j}}$}\hspace{1in} $$
($1\leq j\leq q$) and whenever $u=\gamma u^{(1)}_{f_{j}}$ ($\gamma\in\Gamma$, $\gamma\sim u^{(1)}_{f_{j}})$, either $\ord_{1}v^{(1)}_{\gamma f_{j}} < s_{1}$ or there exists $k\in\{2,\dots, p\}$ such that
$\ord_{k}(u^{(k)}_{\gamma f_{j}})> r_{k}$ or there exists $i\in\{2,\dots, p\}$ such that $\ord_{i}v^{(i)}_{\gamma f_{j}} < s_{i}$ (``or'' is inclusive)$\}$,
$$U_{\eta}''(\overline{r}, \overline{s}) = \{u(\eta)\,|\,u\in U''(\overline{r}, \overline{s})\}.\hspace{5in}$$
Furthermore, let
$$U(\overline{r}, \overline{s}) = U'(\overline{r}, \overline{s})\cup U''(\overline{r}, \overline{s})\,\,\, \text {and}\,\,\, U_{\eta}(\overline{r}, \overline{s}) = U'_{\eta}(\overline{r}, \overline{s})\cup
U''_{\eta}(\overline{r}, \overline{s}).$$
We are going to prove that for every $\overline{r}, \overline{s}\in\mathbb{N}^{p}$ with $\overline{s} <_{P}\overline{r}$, the set $U_{\eta}(\overline{r}, \overline{s})$ is a transcendence basis of the field
$K(W_{\eta}(\overline{r}, \overline{s}))$ over $K$.

First, one can see that this set is algebraically independent over $K$. Indeed, if $f(w_{1}(\eta),\dots, w_{k}(\eta)) = 0$ for some elements $w_{1},\dots, w_{k}\in U(\overline{r}, \overline{s})$, then the $\sigma^{\ast}$-polynomial $f(w_{1},\dots, w_{k})$ lies in $P$ and it is $E$-reduced with respect to ${\mathcal{A}}$. (If $f$ contains a term $w = \gamma u^{(1)}_{f_{j}}$, $1\leq i\leq q$, $\gamma\in\Gamma$,
$\gamma\sim u^{(1)}_{f_{j}}$ such that $\deg_{w}f\geq\deg_{u^{(1)}_{f_{j}}}f_{j}$, then $w\in U''(\overline{r}, \overline{s})$, so either $\ord_{1}(v^{(1)}_{\gamma f_{j}}) < s_{1}\leq\ord_{1}v^{(1)}_f$ or
there exist $k\in\{2,\dots, q\}$ such that $\ord_{k}u_{\gamma f_{j}}^{(k)} > r_{k}\geq \ord_{k}u_{f}^{(k)}$ or there exists $i\in\{2,\dots, p\}$ such that $\ord_{i}v_{\gamma f_{j}}^{(i)} < s_{i}\leq \ord_{i}v_{f}^{(i)}$;
``or'' is inclusive). It follows that $f$ is $E$-reduced with respect to ${\mathcal{A}}$.) By Proposition 3.15, $f=0$, so the set $U_{\eta}(\overline{r}, \overline{s})$ is algebraically independent over $K$.

Now let us prove that if $0\leq s_{i}\leq r_{i}-s^{(0)}_{i}$, where $s^{(0)}_{i}=\max\{\Eord_{i}f_{j}\,|\,1\leq j\leq q\}$ ($1\leq i\leq p$), then every element
$\gamma\eta_{k}\in W_{\eta}(\overline{r}, \overline{s})\setminus U_{\eta}(\overline{r}, \overline{s})$ ($\gamma\in\Gamma$, $1\leq k\leq n$) is algebraic over the field $K(U_{\eta}(\overline{r}, \overline{s}))$.
In this case, since  $\gamma y_{k}\notin U(\overline{r}, \overline{s})$,  $\gamma y_{k}$ is equal to some term of the form $\gamma'u^{(1)}_{f_{j}}$ ($1\leq j\leq q$) where $\gamma'\in\Gamma$, $\gamma'\sim\gamma'u^{(1)}_{j}$,  $\ord_{i}u^{(i)}_{\gamma'f_{j}}\leq r_{i}$ for $i=2,\dots, p$, and $\ord_{l}v^{l}_{\gamma'f_{j}}\geq s_{l}$ for $l=1,\dots, p$.

Let us represent $f_{j}$ as a polynomial in $u^{(1)}_{f_{j}}$:
$$f_{j} = I_{d_{j}}^{(j)}(u^{(1)}_{f_{j}})^{d_{j}} + \dots + I_{1}^{(j)}u^{(1)}_{f_{j}} + I_{0}^{(j)}$$
where $I_{0}^{(j)}, I_{1}^{(j)},\dots I_{d_{j}}^{(j)}$ do not contain $u^{(1)}_{f_{j}}$ (therefore, all terms in these $\sigma^{\ast}$-polynomials are lower than $u^{(1)}_{f_{j}}$ with respect to $<_{1}$).
Since $f_{j}\in P$, $f_{j}(\eta) = 0$, that is,
\begin{equation}
I_{d_{j}}^{(j)}(\eta)(u^{(1)}_{f_{j}}(\eta))^{d_{j}} + \dots + I_{1}^{(j)}(\eta)u^{(1)}_{f_{j}}(\eta) + I_{0}^{(j)}(\eta) = 0.
\end{equation}
Note that $I_{d_{j}}^{(j)}(\eta)\neq 0$. Indeed, since $\rk I_{d_{j}}^{(j)} < \rk f_{j}$, the equality  $I_{d_{j}}^{(j)}(\eta) = 0$ would imply that $I_{d_{j}}^{(j)}\in P$. In this case, the family of all $f_{l}$ with $\rk f_{l} < \rk I_{d_{j}}^{(j)}$ and $I_{d_{j}}^{(j)}$ would form an $E$-autoreduced set in $P$ whose rank is lower than the rank of ${\mathcal{A}}$. This contradicts the fact that ${\mathcal{A}}$ is an  $E$-characteristic set of $P$. Similarly, $I_{\nu}^{(j)}\notin P$ for any $\nu = 0,\dots, d_{j}$ (and any $j=1,\dots, q$) and since $P$ is a $\sigma^{\ast}$-ideal, $\gamma(I_{\nu}^{(j)})\notin P$ for any $I_{\nu}^{(j)}$,  $\gamma\in\Gamma$. Therefore, if we apply $\gamma'$  to both sides of (10), the resulting equality will show that the element $\gamma'u^{(1)}_{f_{j}}(\eta) = \gamma\eta_{k}$ is algebraic over the field
$K(\{\tilde{\gamma}\eta_{l}\,|\,s_{i}\leq \ord_{i}\tilde{\gamma}\leq r_{i}\, (1\leq i\leq p), \tilde{\gamma} y_{l} <_{1}\gamma'u^{(1)}_{f_{j}}\})$. (Note that if $I = I_{\nu}^{(j)}$ for some $j\in\{1,\dots, q\}$ and
$\nu\in\{0,\dots, d_{j}\}$, then $\ord_{i}(\gamma'u_{I}^{(i)})\leq\ord_{i}u_{\gamma'I}^{(i)}\leq r_{i}$ ($2\leq i\leq p$) and $\ord_{k}(\gamma'v_{I}^{(k)})\geq\ord_{k}v_{\gamma'I}^{(k)}\leq s_{k}$ ($1\leq k\leq p$)\,).
Now, the induction on the well-ordered (with respect to $<_{1}$) set of terms $\Gamma Y$ completes the proof of the fact that the set $U_{\eta}(\overline{r}, \overline{s})$ is a transcendence basis of the field $K(W_{\eta}(\overline{r}, \overline{s})$ over $K$.

In order to evaluate the size of $U_{\eta}(\overline{r}, \overline{s})$ we are going to evaluate the sizes of the sets $U'_{\eta}(\overline{r}, \overline{s})$ and $U''_{\eta}(\overline{r}, \overline{s})$, that is, the sizes of the sets $U'(\overline{r}, \overline{s})$ and $U''(\overline{r}, \overline{s})$. For every $k=1,\dots, n$,
let $$A_{k} = \{(i_{1},\dots, i_{m})\in\mathbb{Z}^{m}\,\mid\,\alpha_{1}^{i_{1}}\dots\alpha_{m}^{i_{m}}y_{k}\,\,\, \text{is the $1$-leader of some element of}\,\,\,  \mathcal{A}\}.$$

Applying Theorem 2.8, we obtain that there exists a numerical polynomial $\omega_{k}(t_{1},\dots, t_{p})$ in $p$ variables with rational coefficients such that
$\omega_{k}(r_{1},\dots, r_{p}) = \Card W_{A_{k}}(r_{1},\dots, r_{p})$ for all sufficiently large $(r_{1},\dots, r_{p})\in\mathbb{N}^{p}$. It follows that
if we set $\psi_{\eta|K}(t_{1},\dots, t_{p}) = \D\sum_{k=1}^{n}\omega_{k}(t_{1},\dots, t_{p})$, then there exist $r^{(0)}_{i}, s_{i}^{(0)}, s^{(1)}_{i}\in\mathbb{N}$ ($1\leq i\leq p$) with $s_{i}^{(1)} < r^{(0)}_{i}-s_{i}^{(0)}$ such that for all $\overline{r}=(r_{1},\dots, r_{p}), \overline{s} = (s_{1},\dots, s_{p})\in\mathbb{N}^{p}$ with $r_{i}\geq r_{i}^{(0)}$, $s_{i}^{(1)}\leq s_{i}\leq r_{i}-s_{i}^{(0)}$, one has
\begin{equation}
\Card U_{\eta}(\overline{r}, \overline{s}) = \psi_{\eta|K}(r_{1},\dots, r_{p}) - \psi_{\eta|K}(s_{1}-1,\dots, s_{p}-1).
\end{equation}
Furthermore, $\deg\psi_{\eta|K}\leq m$, and $\deg\psi_{\eta|K} = m$ if and only if at least one of the sets $A_{k}$ ($1\leq k\leq n$) is empty.

In order to evaluate $\Card U''(\overline{r}, \overline{s})$, note that this set consists of all terms $\gamma u^{(1)}_{f_{j}}$  ($\gamma\in\Gamma$, $\gamma\sim u^{(1)}_{f_{j}}, 1\leq j\leq q$) such that
$s_{i}\leq\ord_{i}u^{(1)}_{\gamma f_{j}}\leq r_{i}$ and either $\ord_{1}v^{(1)}_{\gamma f_{j}} < s_{1}$ or there exists $k\in\{2,\dots, p\}$ such that $\ord_{k}u^{(k)}_{\gamma f_{j}} > r_{k}$ or there exists
$i\in\{2,\dots, p\}$ such that  $\ord_{i}v^{(i)}_{\gamma f_{j}} < s_{i}$ (``or'' is inclusive). It follows from Remarks 3.6 and 2.10 that if we fix $j$, the number of such terms $\gamma u^{(1)}_{f_{j}}$
satisfying the conditions $\ord_{i}v^{(i)}_{\gamma f_{j}} = \ord\gamma + b_{if_{j}} < s_{i}$, $\ord_{i}(\gamma u^{(1)}_{f_{j}}) = \ord_{i}\gamma + a_{1f_{j}}\geq s_{i}$ for $i\in\{k_{1},\dots, k_{d}\}\subseteq\{1,\dots, p\}$, $\ord_{i}(v^{(i)}_{\gamma f_{j}}) = \ord\gamma + b_{if_{j}}\geq s_{i}$ for
$i\in\{1,\dots, p\}$, $i\neq k_{\nu}$ ($1\leq \nu\leq d$) and  $\ord_{i}u^{(i)}_{\gamma f_{j}} = \ord\gamma + a_{if_{j}}\leq r_{i}$ for $i=1,\dots, p$ is equal to
$$\prod_{\substack{1\leq i\leq p,\\ i\neq k_{1},\dots, k_{d}}}\Bigg[\sum_{\mu=0}^{m_{i}}(-1)^{m_{i}-\mu}2^{\mu}{m_{i}\choose \mu}
\Bigg({{r_{i}- a_{if_{j}} +\mu}\choose \mu} - \hspace{3in}$$ $${{s_{i} - b_{if_{j}} +\mu-1}\choose \mu}\Bigg)\Bigg]\cdot \prod_{\nu=1}^{d}\Bigg[\sum_{\mu=0}^{m_{k_{\nu}}}(-1)^{m_{k_{\nu}}-\mu}2^{\mu}{m_{k_{\nu}}\choose \mu}\cdot \hspace{3in}$$
\begin{equation}
\Bigg({s_{k_{\nu}}- b_{k_{\nu}f_{j}}-1 + m_{k_{\nu}}\choose{m_{k_{\nu}}}} -{s_{k_{\nu}}- a_{1f_{j}}-1 + m_{k_{\nu}}\choose{m_{k_{\nu}}}}\Bigg)\Bigg]\hspace{1in}
\end{equation}
and a similar formula holds for the number of terms satisfying the conditions $\ord_{i}u^{(i)}_{\gamma f_{j}} >  r_{i}$ for $i\in\{l_{1},\dots, l_{e}\}\subseteq\{2,\dots, p\}$,
($\gamma\in\Gamma$, $\gamma\sim u^{(1)}_{f_{j}}$),  $\ord_{i}v^{(i)}_{\gamma f_{j}}\geq s_{i}$ for $i\in\{1,\dots, p\}$ and $\ord_{i}u^{(i)}_{\gamma f_{j}}\leq r_{i}$ for $i\neq l_{\nu}$ ($1\leq \nu\leq e$).

Applying the principle of inclusion and exclusion (taking into account terms that are multiples of more than one $1$-leaders), we obtain that $\Card U''(\overline{r}, \overline{s})$ is an alternating sum of polynomials in $r_{1},\dots, r_{p}, s_{1},\dots, s_{p}$ that are products of $k$ ($0\leq k\leq p$) terms of the form ${\D{r_{i}-a_{i}+m_{i}}\choose{\D m_{i}}} - {\D{s_{i}-b_{i} + m_{i}}\choose{\D m_{i}}}$ with $a_{i}, b_{i}\in\mathbb{N}$ ($1\leq i < p$) and
$p-k$ terms of the form either ${\D{s_{i}-c_{i}+m_{i}}\choose{\D m_{i}}} - {\D{s_{i}-d_{i} + m_{i}}\choose{\D m_{i}}}$ or ${\D{r_{i}-c_{i}+m_{i}}\choose{\D m_{i}}} - {\D{r_{i}-d_{i} + m_{i}}\choose{\D m_{i}}}$ with $c_{i}, d_{i}\in\mathbb{N}$, $c_{i} < d_{i}$. Since each such a polynomial has total degree at most $m-1$ and its degree with respect to $r_{i}$ or $s_{i}$ ($1\leq i\leq p$) does not exceed $m_{i}$, we obtain that
$\Card U''(\overline{r}, \overline{s}) = \lambda(r_{1},\dots, r_{p}, s_{1},\dots, s_{p})$ where $\lambda(t_{1},\dots, t_{2p})$ is a numerical polynomial in $2p$ variables such that $\deg\lambda < m$ and
$\deg_{t_{i}}\lambda\leq m_{i}$, $\deg_{t_{j}}\lambda\leq m_{j-p}$ for $i=1,\dots, p$, $j=p+1,\dots, 2p$.  It follows that the numerical polynomial
$$\Phi_{\eta\,|\,K}(t_{1},\dots, t_{2p}) = \psi_{\eta\,|,K}(t_{1},\dots, t_{p}) - \psi_{\eta\,|\,K}(t_{p+1}-1,\dots, t_{2p}-1) + \lambda(t_{1},\dots, t_{2p})$$ satisfies conditions of our theorem.
\end{proof}

\begin{definition}
The numerical polynomial $\Phi_{\eta\mid K}(t_{1},\dots, t_{2p})$ whose existence is established by Theorem 4.1 is called the {\em $2p$-variate $\sigma^{\ast}$-dimension polynomial} of the $\sigma^{\ast}$-field extension $L/K$ associated with the system of $\sigma^{\ast}$-generators $\eta$ and partition (6) of the set $\sigma$.
\end{definition}

The following theorem describes some invariants of a $2p$-variate $\sigma$-dimension polynomial of a finitely generated $\sigma^{\ast}$-field extension $L/K$ with partition (6) of $\sigma$, that is, characteristics of
the extension that do not depend on the set of $\sigma^{\ast}$-generators of $L$ over $K$.  In what follows we use the following notation. For any permutation
$(j_{1},\dots, j_{2p})$ of the set $\{1,\dots, 2p\}$, let $<_{j_{1},\dots, j_{2p}}$ denote the lexicographic order on $\mathbb{N}^{2p}$ such that $(k_{1},\dots, k_{2p})<_{j_{1},\dots,
j_{2p}} (l_{1},\dots, l_{2p})$ if and only if either $k_{j_{1}} < l_{j_{1}}$ or there exists $q\in\mathbb{N}$, $2\leq q\leq 2p$, such that $k_{j_{\nu}} = l_{j_{\nu}}$ for $\nu < q$ and
$k_{j_{q}} < l_{j_{q}}$.

\begin{theorem}
With the notation of Theorem {\em 4.1}, let $\Phi_{\eta\,|\,K}(t_{1},\dots, t_{2p})$ be the $2p$-variate $\sigma^{\ast}$-dimension polynomial of the $\sigma^{\ast}$-field extension
$L = K\langle \eta_{1},\dots,\eta_{n}\rangle^{\ast}$. Since the degrees of $\Phi_{\eta\,|\,K}$ with respect to $t_{i}$ and $t_{p+i}$ ($1\leq i\leq p$) do not exceed $m_{i}=\Card\sigma_{i}$ (see partition {\em (6)}), Theorem {\em 2.3} shows that this polynomial can be written as $$\Phi_{\eta\,|\,K}=\sum_{i_{1}=0}^{m_{1}}\dots \D\sum_{i_{p}=0}^{m_{p}}\sum_{i_{p+1}=0}^{m_{1}}\dots \D\sum_{i_{2p}=0}^{m_{p}}{a_{i_{1}\dots i_{2p}}}{t_{1}+i_{1}\choose i_{1}}\dots{t_{2p}+i_{2p}\choose i_{2p}}.$$
Let $E_{\eta} = \{(i_{1},\dots, i_{2p})\in\mathbb{N}^{2p}\,\mid\, 0\leq i_{k}, i_{p+k}\leq m_{k}$ ($k=1,\dots, p$) and $a_{i_{1}\dots i_{2p}}\neq 0\}$.
Then the total degree $d$ of $\Phi_{\eta\,|\,K}$ with respect to $t_{1},\dots, t_{p}$ and the coefficients of the terms of total degree $d$ in $\Phi_{\eta\,|\,K}$ do not depend on the choice of the set of $\sigma$-generators $\eta$. Furthermore, if $(\mu_{1},\dots, \mu_{p})$ is any permutation of $\{1,\dots, p\}$ and $(\nu_{1},\dots, \nu_{p})$ is any permutation of $\{p+1,\dots, 2p\}$, then the maximal element of $E_{\eta}$ with respect to the lexicographic order $<_{\mu_{1},\dots, \mu_{p}, \nu_{1},\dots, \nu_{p}}$ and the corresponding coefficient $a_{\mu_{1},\dots, \mu_{p}, \nu_{1},\dots, \nu_{p}}$ do not depend on the choice of a finite set of $\sigma$-generators of $L/K$ either. Finally, $a_{m_{1}\dots m_{p}0\dots 0} = a_{0\dots 0 m_{1}\dots m_{p}} = \sigma$-$\trdeg_{K}L$.
\end{theorem}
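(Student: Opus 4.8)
The plan is to fix the extension $L$ and compare the polynomial produced by Theorem~4.1 for two different systems of $\sigma^{\ast}$-generators. Let $\zeta = \{\zeta_{1},\dots,\zeta_{n'}\}$ be a second system of $\sigma^{\ast}$-generators, so $L = K\langle\eta\rangle^{\ast} = K\langle\zeta\rangle^{\ast}$. Since each $\zeta_{i}$ is a rational expression over $K$ in finitely many transforms $\gamma\eta_{j}$, and each $\eta_{j}$ likewise in finitely many $\gamma\zeta_{i}$, there is one tuple $\vec{h} = (h_{1},\dots,h_{p})\in\mathbb{N}^{p}$ with $\zeta_{i}\in K(\{\gamma\eta_{j}\mid\ord_{k}\gamma\le h_{k},\,1\le j\le n\})$ and $\eta_{j}\in K(\{\gamma\zeta_{i}\mid\ord_{k}\gamma\le h_{k},\,1\le i\le n'\})$ for all $i,j,k$. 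Applying an arbitrary $\delta\in\Gamma$ (an automorphism of $L$ carrying $K$ onto $K$) and using $\ord_{k}(\delta\gamma)\le\ord_{k}\delta+\ord_{k}\gamma$ together with the reverse inequality $\ord_{k}(\delta\gamma)\ge\ord_{k}\delta-\ord_{k}\gamma$, one sees that for $\delta\in\Gamma(\overline{r};\overline{s})$ every $\delta\zeta_{i}$ lies in $K(\{\mu\eta_{j}\mid\mu\in\Gamma(\overline{r}+\vec{h};\,\overline{s}\ominus\vec{h})\})$, where $\overline{s}\ominus\vec{h}=(\max\{s_{k}-h_{k},0\})_{k}$. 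Passing to transcendence degrees gives
\[
\Phi_{\zeta\,|\,K}(\overline{r},\overline{s})\le\Phi_{\eta\,|\,K}(\overline{r}+\vec{h},\,\overline{s}\ominus\vec{h}),
\]
and, by symmetry, the same with $\eta$ and $\zeta$ interchanged, for all $(\overline{r},\overline{s})$ in the common (full-dimensional, unbounded) region where both sides agree with the respective polynomials. Writing $\vec{c}=(h_{1},\dots,h_{p},-h_{1},\dots,-h_{p})$, these read $\Phi_{\zeta\,|\,K}(\vec{t})\le\Phi_{\eta\,|\,K}(\vec{t}+\vec{c})$ and $\Phi_{\eta\,|\,K}(\vec{t})\le\Phi_{\zeta\,|\,K}(\vec{t}+\vec{c})$ as inequalities between numerical polynomials valid for all large $\vec{t}$ in that region.

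The second ingredient is a shift-invariance lemma for the canonical representation: if $f(\vec{t})=\sum_{\vec{i}}a_{\vec{i}}\prod_{k}\binom{t_{k}+i_{k}}{i_{k}}$ is numerical and $\vec{c}\in\mathbb{Z}^{2p}$, then for every lexicographic order $<_{L}$ on $\mathbb{N}^{2p}$ the $<_{L}$-largest index $\vec{i}$ with $a_{\vec{i}}\ne0$, and its coefficient, coincide for $f(\vec{t})$ and for $f(\vec{t}+\vec{c})$. Indeed $\binom{t_{k}+c_{k}+i_{k}}{i_{k}}=\binom{t_{k}+i_{k}}{i_{k}}+(\text{binomials }\binom{t_{k}+j}{j}\text{ with }j<i_{k})$, so $\vec{t}\mapsto\vec{t}+\vec{c}$ acts unitriangularly on the binomial basis with respect to componentwise domination; as every lex order refines componentwise domination, the $<_{L}$-leading index is unchanged and its coefficient is multiplied by $1$.

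To turn the two inequalities into equalities of leading data, fix one of the admissible orders $<_{L}\,=\,<_{\mu_{1},\dots,\mu_{p},\nu_{1},\dots,\nu_{p}}$ and evaluate along a direction adapted to it, with $t_{\mu_{1}}$ growing much faster than $t_{\mu_{2}}$, and so on, so that the $<_{L}$-leading monomial of each polynomial dominates its value. Because $<_{L}$ gives top priority to a permutation of $t_{1},\dots,t_{p}$ and $\Phi_{\eta\,|\,K}$ is a (nonnegative, unbounded) transcendence-degree polynomial on the chosen direction, its $<_{L}$-leading coefficient is positive. By the shift lemma the leading index and coefficient of $\Phi_{\eta\,|\,K}(\vec{t}+\vec{c})$ equal those of $\Phi_{\eta\,|\,K}(\vec{t})$; comparing dominant terms in $\Phi_{\zeta\,|\,K}(\vec{t})\le\Phi_{\eta\,|\,K}(\vec{t}+\vec{c})$ forces $\vec{i}^{\,\ast}_{\zeta}\le_{L}\vec{i}^{\,\ast}_{\eta}$, with $a^{\ast}_{\zeta}\le a^{\ast}_{\eta}$ in case of equality, and the reverse inequality gives the opposite comparison. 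Hence the $<_{L}$-maximal element of $E_{\eta}$ and its coefficient $a_{\mu_{1}\dots\mu_{p}\nu_{1}\dots\nu_{p}}$ are independent of the generating set. Choosing for $<_{L}$ an order that exhausts $t_{1},\dots,t_{p}$ first yields in the same way the invariance of the total degree $d$ in $t_{1},\dots,t_{p}$; and since, by the shape $\Phi_{\eta\,|\,K}=\psi_{\eta\,|\,K}(\overline{t})-\psi_{\eta\,|\,K}(\overline{t}\,'-\vec{1})+\lambda$ established in the proof of Theorem~4.1, the homogeneous $(t_{1},\dots,t_{p})$-degree-$d$ part of $\Phi_{\eta\,|\,K}$ comes from the $\overline{t}$-only polynomial $\psi_{\eta\,|\,K}$, that part is pure in $t_{1},\dots,t_{p}$ and its coefficients are preserved as well.

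For the final assertion I would compute the two extremal coefficients directly from this shape. With $A_{k}=\{(i_{1},\dots,i_{m})\in\mathbb{Z}^{m}\mid\alpha_{1}^{i_{1}}\cdots\alpha_{m}^{i_{m}}y_{k}\text{ is a }1\text{-leader of some element of the }E\text{-characteristic set}\}$ as in the proof of Theorem~4.1, Theorem~2.8(iv) shows that every empty $A_{k}$ contributes the pure top term $2^{m}\prod_{j=1}^{p}\binom{t_{j}+m_{j}}{m_{j}}$ to $\psi_{\eta\,|\,K}$, so that $a_{m_{1}\dots m_{p}0\dots0}$ equals $2^{m}$ times the number of indices $k$ with $A_{k}=\emptyset$; the analogous reading of the summand $-\psi_{\eta\,|\,K}(\overline{t}\,'-\vec{1})$ determines $a_{0\dots0m_{1}\dots m_{p}}$. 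Since the number of $k$ with $A_{k}=\emptyset$ is exactly the number of $\eta_{k}$ constituting a $\sigma$-transcendence basis, it equals $\sigma$-$\trdeg_{K}L$, exactly as in Theorem~2.1(iii), which gives the stated identification. The main obstacle is Step~1: the lower bound must be tracked through the truncation $\overline{s}\ominus\vec{h}$, and one must confine attention to the region where $\overline{s}$ is large enough that the counts are governed by the polynomials; a secondary difficulty is the sign-and-direction bookkeeping in Step~3, where it is precisely the positivity of the leading coefficients for the admissible orders that upgrades the one-sided value inequalities to equalities of leading terms.
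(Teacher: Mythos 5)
Your Steps 1--3 are essentially the paper's own argument --- the paper likewise takes a second system of generators, produces the shifts $h_{1},\dots,h_{p}$, derives the two inequalities with the shift vector $(h_{1},\dots,h_{p},-h_{1},\dots,-h_{p})$, and compares canonical-form coefficients --- and your unitriangular shift lemma plus evaluation along lex-adapted directions is a correct, more self-contained version of what the paper disposes of by citation. The last two steps, however, contain genuine errors.

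The degree-$d$ claims fail. Lex-leading data does not determine total degree: $t_{1}^{3}+t_{2}^{3}+t_{1}t_{2}$ and $t_{1}^{3}+t_{2}^{3}+t_{1}^{2}t_{2}^{2}$ have the same lex-maximal index for every lexicographic order, yet different total degrees; so ``choosing an order that exhausts $t_{1},\dots,t_{p}$ first'' proves nothing about $d$. Your fallback claim that the $(t_{1},\dots,t_{p})$-degree-$d$ part of $\Phi_{\eta\,|\,K}$ comes from $\psi_{\eta\,|\,K}$ alone is also false: $\lambda$ has total degree $<m$, but its degree in $t_{1},\dots,t_{p}$ can equal $\deg\psi_{\eta\,|\,K}$ when $d<m$. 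The paper's own Example 4.4 is a counterexample: there the $\psi$-part (i.e.\ $\Card U'_{\eta}$) has top part $8a\,t_{2}t_{3}$, while the degree-$2$ part of $\Phi_{\eta\,|\,K}$ in $(t_{1},t_{2},t_{3})$ is $8c\,t_{1}t_{2}+8b\,t_{1}t_{3}+4(4a+1)\,t_{2}t_{3}$, so $\lambda=\Card U''$ contributes at top $(t_{1},\dots,t_{p})$-degree. This part of the theorem needs the argument the paper imports from \cite[Theorem 3.3.21]{L4}: the top homogeneous part is invariant under $\vec{t}\mapsto\vec{t}+\vec{c}$, and the two inequalities force the two top homogeneous parts to dominate each other on the cone of nonnegative directions, hence to coincide.

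The final identification is also wrongly argued, on two counts. (i) The formula $a_{m_{1}\dots m_{p}0\dots0}=2^{m}\cdot\Card\{k\,:\,A_{k}=\emptyset\}$ presumes that a nonempty $A_{k}$ forces $\deg\phi_{A_{k}}<m$; that is true for subsets of $\mathbb{N}^{m}$ (Theorem 2.4(iii)) but false in $\mathbb{Z}^{m}$: any orthant disjoint from $A_{k}$ still contributes full degree-$m$ growth. For $m=p=1$ and $A_{k}=\{1\}$ one gets $\phi_{A_{k}}(t)=t+1$, of degree $m$ with leading coefficient $1$. (ii) $\Card\{k:A_{k}=\emptyset\}$ is not $\sigma$-$\trdeg_{K}L$: take $m=p=1$, $n=2$, $\eta_{1}$ $\sigma$-transcendental and $\eta_{2}=\alpha_{1}\eta_{1}$; then $P=[\alpha_{1}y_{1}-y_{2}]^{\ast}$ and, by Proposition 3.16, its $E$-characteristic set is $\{\alpha_{1}y_{1}-y_{2},\ y_{1}-\alpha_{1}^{-1}y_{2}\}$, so $A_{1}=\{1\}$ and $A_{2}=\{-1\}$ are both nonempty although $\sigma$-$\trdeg_{K}L=1$ (and indeed $\psi=2t+2$ has leading coefficient $2=2^{m}\cdot1$, not $0$). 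The correct mechanism --- what ``mimic \cite[Theorem 6.4.8]{KLMP}'' actually involves --- is orthant-wise: the leading coefficient of $\phi_{A_{k}}$ equals the number of the $2^{m}$ orthants of $\mathbb{Z}^{m}$ that miss $A_{k}$, and for each fixed orthant the number of indices $k$ whose $A_{k}$ misses it equals $\sigma$-$\trdeg_{K}L$. Finally, even granting your count, you would obtain $a_{m_{1}\dots m_{p}0\dots0}=2^{m}\cdot\sigma$-$\trdeg_{K}L$ and then silently identify it with the asserted value $\sigma$-$\trdeg_{K}L$; that $2^{m}$ normalization (compare Theorem 2.1(iii)) has to be confronted, not elided. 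What is salvageable in your purity idea is its use at total degree exactly $m$: since $\deg\lambda<m$, the coefficient at $(m_{1},\dots,m_{p},0,\dots,0)$ does come from $\psi_{\eta\,|\,K}$ alone --- this is precisely the paper's reduction --- but evaluating that coefficient then requires the orthant count, not the count of empty $A_{k}$'s.
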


\begin{proof}
Suppose that $\zeta = \{\zeta_{1},\dots, \zeta_{l}\}$ is another set of $\sigma^{\ast}$-generators of $L/K$, that is, $L = K\langle \eta_{1},\dots,\eta_{n}\rangle^{\ast} = K\langle\zeta_{1},\dots,\zeta_{l}\rangle^{\ast}$. Let
$$\Phi_{\zeta\,|\,K}(t_{1},\dots, t_{2q}) = \D\sum_{i_{1}=0}^{m_{1}}\dots\sum_{i_{p}=0}^{m_{p}}\D\sum_{i_{p+1}=0}^{m_{1}}\dots\sum_{i_{2p}=0}^{m_{p}}b_{i_{1}\dots i_{2p}}{t_{1}+i_{1}\choose i_{1}}\dots{t_{2p}+i_{2p}\choose i_{2p}}$$
be the $2p$-variate dimension polynomial of the extension $L/K$ associated with the system of $\sigma^{\ast}$-generators $\zeta$.
Then there exist $h_{1},\dots, h_{p}\in\mathbb{N}$ such that $\eta_{i} \in K(\bigcup_{j=1}^{l}\Gamma(h_{1},\dots, h_{p})\zeta_{j})$ and $\zeta_{k}\in K(\bigcup_{j=1}^{n}\Gamma(h_{1},\dots, h_{p})\eta_{j})$ for any $i=1,\dots, n$ and $k=1,\dots, l$. (If $\Gamma'\subseteq \Gamma$, then $\Gamma'\zeta_{j}$ denotes the set $\{\gamma\zeta_{j}\,|\,\gamma\in\Gamma'\}$.) It follows that there exist
$r^{(0)}_{i}, s_{i}^{(0)}, s^{(1)}_{i}\in\mathbb{N}$ ($1\leq i\leq p$) with $s_{i}^{(1)} < r^{(0)}_{i}-s_{i}^{(0)}$ such that whenever $r_{i}\geq r_{i}^{(0)}$, $s_{i}^{(1)}\leq s_{i}\leq r_{i}-s_{i}^{(0)}$
($1\leq i\leq p$), one has
$$\Phi_{\eta\,|\,K}(r_{1},\dots, r_{2p})\leq\Phi_{\zeta\,|\,K}(r_{1}+h_{1},\dots, r_{p}+h_{p}, r_{p+1}-h_{1},\dots, r_{2p}-h_{p})$$ and
$$\Phi_{\zeta\,|\,K}(r_{1},\dots, r_{2p})\leq\Phi_{\zeta\,|\,K}(r_{1}+h_{1},\dots, r_{p}+h_{p}, r_{p+1}-h_{1},\dots, r_{2p}-h_{p}).$$

Now the statement of the theorem about the maximal elements of $E_{\eta}$ with respect to the lexicographic orders $<_{\mu_{1},\dots, \mu_{p}, \nu_{1},\dots, \nu_{p}}$ and the corresponding coefficients
follows from the fact that for any element $(k_{1},\dots, k_{2p})\in E_{\eta}'$, the term ${t_{1}+k_{1}\choose k_{1}}\dots {t_{2p}+k_{2p}\choose k_{2p}}$ appears in
$\Phi_{\eta\mid K}(t_{1},\dots, t_{2p})$ and $\Phi_{\zeta\mid K}(t_{1},\dots, t_{2p})$ with the same coefficient  $a_{k_{1}\dots k_{2p}}$. The equality of the coefficients of the corresponding terms of total
degree $d = \deg\Phi_{\eta\,|\,K} = \deg\Phi_{\zeta,|\,K}$ in $\Phi_{\eta,|\,K}$ and $\Phi_{\zeta\,|\,K}$ can be shown as in the proof of \cite[Theorem 3.3.21]{L4}.

In order to prove the last part of the theorem, note that the expression (12) and a similar expression corresponding to the conditions with
$\ord_{i}u^{(i)}_{\gamma f_{j}} >  r_{i}$ for $i\in\{l_{1},\dots, l_{e}\}\subseteq\{2,\dots, p\}$, ($\gamma\in\Gamma$, $\gamma\sim u^{(1)}_{f_{j}}$),  $\ord_{i}v^{(i)}_{\gamma f_{j}}\geq s_{i}$ for $i\in\{1,\dots, p\}$ and $\ord_{i}u^{(i)}_{\gamma f_{j}}\leq r_{i}$ for  $i\neq l_{\nu}$, $\nu =1,\dots, e$ (see the proof of Theorem 4.1) have the property that their total degrees with respect to $r_{1},\dots, r_{p}$ and $s_{1},\dots, s_{p}$ are less than $m$. It follows that the coefficients of the terms of total degree $m$ in $t_{1},\dots, t_{p}$ and terms of total degree $m$ in $t_{p+1},\dots, t_{2p}$  in the polynomial $\Phi_{\eta\,|\,K}$  are equal to the corresponding coefficients in the polynomials $\psi_{\eta\,|\,K}(t_{1},\dots, t_{p})$ and $\psi_{\eta\,|\,K}(t_{p+1},\dots, t_{2p})$, respectively (see (11)). Now, using the fact that if elements $\eta_{i_{1}},\dots,\eta_{i_{k}}$ ($i_{1},\dots, i_{k}\in\{1,\dots, n\}$) are $\sigma$-algebraically independent over $K$, then $\trdeg_{K}K((\{\gamma\eta_{i_{j}}\,|\,\gamma\in\Gamma(r_{1}, \dots, r_{p}; s_{1},\dots, s_{p}), 1\leq j\leq k\}) =\\
k\D\prod_{i=1}^{p}\left[\D\sum_{j=0}^{m_{i}}(-1)^{m_{i}-j}2^{j}{m_{i}\choose j}\left({{r_{i}+j}\choose j} - {{s_{i}+j-1}\choose j}\right)\right]$ for any $r_{i}, s_{i}\in\mathbb{N}$ with $s_{i}\leq r_{i}$ ($1\leq i\leq p$),
one can mimic the proof of \cite[Theorem 6.4.8]{KLMP} to obtain that $a_{m_{1}\dots m_{p}0\dots 0} = a_{0\dots 0 m_{1}\dots m_{p}} = \sigma$-$\trdeg_{K}L$.
\end{proof}

\begin{example}
Let $K$ be an inversive difference ($\sigma^{\ast}$-) field with a basic set $\sigma = \{\alpha_{1}, \alpha_{2}, \alpha_{3}\}$ considered together with its partition
$\sigma = \{\alpha_{1}\}\cup\{\alpha_{2}\}\cup\{\alpha_{3}\}$.  Let $L = K\langle \eta \rangle^{\ast}$ be a $\sigma^{\ast}$-field extension with the defining equation
\begin{equation}\alpha_{1}^{a}\eta + \alpha_{1}^{-a}\eta + \alpha_{2}^{b}\eta + \alpha_{3}^{c}\eta = 0
\end{equation}
where  $a, b, c\in\mathbb{N}$, $a > b > c > 0$. It means that the defining $\sigma^{\ast}$-ideal $P$ of the extension $L/K$ is a linear $\sigma^{\ast}$-ideal of the ring of $\sigma^{\ast}$-polynomials $K\{y\}^{\ast}$ generated by the linear $\sigma^{\ast}$-polynomial $f=\alpha_{1}^{a}y + \alpha_{1}^{-a}y + \alpha_{2}^{b}y + \alpha_{3}^{c}y$.

By Proposition 3.16, the $\sigma^{\ast}$-polynomials $f$ and $\alpha_{1}^{-1}f = \alpha_{1}^{-(a+1)}y + \alpha_{1}^{a-1}y + \alpha_{1}^{-1}\alpha_{2}^{b}y + \alpha_{1}^{-1}\alpha_{3}^{c}y$ form an $E$-characteristic set of $P$. Setting $\overline{r} = (r_{1}, r_{2}, r_{3})$, $\overline{s} = (s_{1}, s_{2}, s_{3})$ and using the notation of the proof of Theorem 4.1, we obtain (applying  Theorems 2.6 and 2.8) that for all sufficiently large
$(r_{1}, r_{2}, r_{3}, s_{1}, s_{2}, s_{3})\in\mathbb{N}^{6}$,

$\Card U'_{\eta}(\overline{r}, \overline{s}) = \phi_{\{(a, 0, 0), (-a-1, 0, 0)\}}(r_{1}, r_{2}, r_{3}, s_{1}, s_{2}, s_{3}) =$

$2a(2r_{2}-2s_{2}+2)(2r_{3}-2s_{3}+2)$

\smallskip

Furthermore, using the method of inclusion and exclusion (as it is indicated in the proof of Theorem 4.1), we get
$$\Card U''_{\eta}(\overline{r}, \overline{s}) = (2a+1)(2r_{2}-2s_{2} + 2)(2r_{3}-2s_{3} + 2) + 4b(r_{1}-s_{1} + 1)(2r_{3}-2s_{3} + 2) +\hspace{1in}$$
$$4c(r_{1}-s_{1} + 1)(2r_{2}-2s_{2} + 2) - 2b(2a+1)(2r_{3}-2s_{3} + 2) - 2c(2a+1)(2r_{2}-2s_{2} + 2) -\hspace{2in}$$
$$8bc(r_{1}-s_{1} + 1) + 8abc + 4bc.\hspace{4in}$$
Since the $6$-variate $\sigma^{\ast}$-dimension polynomial $\Phi_{\eta\,|\,K}$ expresses the number of elements of the set $U'_{\eta}(\overline{r}, \overline{s})\cup \Card U''_{\eta}(\overline{r}, \overline{s})$ for all sufficiently large values of its arguments, we obtain
$$\Phi_{\eta\,|\,K}(t_{1},\dots, t_{6}) = 8ct_{1}t_{2} + 8bt_{1}t_{3} - 8ct_{1}t_{5} - 8bt_{1}t_{6} + 4(4a+1)t_{2}t_{3} - 8ct_{2}t_{4} -\hspace{2in}$$
$$ 4(4a+1)t_{2}t_{6} - 8bt_{3}t_{4} - 4(4a+1)t_{3}t_{5} + 8ct_{4}t_{5} + 8bt_{4}t_{6} + 4(4a+1)t_{5}t_{6} +   \hspace{3in}$$
\begin{equation}
\text{the linear combination of monomials of total degree at most}\,\,\, 1.\hspace{5in}
\end{equation}
The univariate $\sigma^{\ast}$-dimension polynomial $\phi_{\eta\,|\,K}(t)$ (see theorem 2.1) is as follows (by \cite[Theorem 6.4.8]{KLMP}, it coincides with the dimension polynomial of the set
$A=\{(a, 0, 0), (-a-1, 0, 0)\}\subset\mathbb{Z}^{3}$, so it can be computed using Theorems 2.8 and 2.6 with $p=1$).
$$\phi_{\eta\,|\,K}(t) = 4at^{2} +  \,\,\,\text{the linear combination of monomials of degree at most}\,\,\, 1.\hspace{5in}$$
By Theorem 4.3, $\deg\,\Phi_{\eta\,|\,K} = 2$ and the coefficients of the terms $t_{i}t_{j}$ ($1\leq i, j\leq 6$) are invariants of the extension $L/K$, that is, they do not depend on the set of $\sigma^{\ast}$-generators of this extension. Therefore, the polynomial $\Phi_{\eta\,|\,K}(t_{1},\dots, t_{6})$ carries all three parameters $a$, $b$ and $c$ of the defining equation (13). At the same time, the univariate polynomial
$\phi_{\eta\,|\,K}(t)$ carries only the parameter $a$.
\end{example}

The fact that the $2p$-variate $\sigma^{\ast}$-dimension polynomial carry more invariants than its univariate counterpart can be applied to the equivalence problem for algebraic difference equations.
Suppose that we have two systems of algebraic difference ($\sigma$-) equations over a $\sigma^{\ast}$-field $K$  (i. e., equations of the form $f_{i}=0$ ($i\in I$) where all $f_{i}$ lie in some ring of $\sigma^{\ast}$-polynomials $K\{y_{1},\dots, y_{n}\}^{\ast}$) that are defining equations of finitely generated $\sigma^{\ast}$-field extensions $L/K$ and $L'/K$ (that is, the left-hand sides of the systems generate prime $\sigma^{\ast}$-ideals $P$ and $P'$ in the corresponding rings of $\sigma^{\ast}$-polynomials $R$ and $R'$ (possibly of different numbers of $\sigma^{\ast}$-generators) such that $L$ and $L'$ are $\sigma$-isomorphic to $\qf(R/P)$ and $\qf(R'/P')$, respectively). These systems are said to be {\em equivalent} if there is a $\sigma$-isomorphism between $L$ and $L'$ which is identity on $K$.
The  $2p$-variate $\sigma^{\ast}$-dimension polynomial introduced by Theorem 4.1 allows one to figure out that two systems of partial algebraic $\sigma$-equations are not equivalent in the case when the corresponding $\sigma^{\ast}$-field extensions have the same univariate $\sigma^{\ast}$-dimension polynomials.  As an example, consider the difference equations
\begin{equation} \alpha_{1}^{a}\eta + \alpha_{1}^{-a}\eta + \alpha_{2}^{b}\eta + \alpha_{3}^{c}\eta = 0
\end{equation}
and
\begin{equation} \alpha_{1}^{a}\eta + \alpha_{1}^{-a}\eta + \alpha_{2}^{d}\eta + \alpha_{3}^{e}\eta = 0
\end{equation}
where $a, b, c, d, e\in\mathbb{N}$, $a > b > c > 0$ and $a > d > e > 0$.

The invariants carried by the univariate $\sigma^{\ast}$-dimension polynomials associated with these equations (the equation (15) is considered in the last Example) are the same, the degree $1$ and $a$. At the same time, the 6-variate dimension polynomials for these equations carry invariants $a, b, c$, and $d, e, c$, respectively (these 6-variate dimension polynomials are of the form (14)). Thus, the difference equations (15) and (16) are not equivalent, even though the corresponding $\sigma^{\ast}$-field extensions have the same invariants carried by the univariate $\sigma^{\ast}$-dimension polynomials.

\section{Acknowledges}

This research was supported by the NSF grant CCF--2139462.

\end{document}